\newtheorem{nnassumption}{\bf Assumption}
\newtheorem{nntheorem}{\bf Theorem}
\newenvironment{theorem}{\begin{nntheorem}\it}{\end{nntheorem}}
\newtheorem{nncorollary}{\bf Corollary}
\newtheorem{nndefinition}{\bf Definition}
\newtheorem{nnproposition}{\bf Proposition}
\newtheorem{nnproblem}{\bf Problem}
\newtheorem{nnlemma}{\bf Lemma}
\newenvironment{lemma}{\begin{nnlemma}\it}{\end{nnlemma}}
\newtheorem{nnremark}{\bf Remark}
\newenvironment{remark}{\begin{nnremark} \rm }{\hfill \hspace*{1pt}\hfill $\circ$\end{nnremark}}
\newtheorem{nnexample}{\bf Example}
\renewcommand{\leq}{\leqslant}
\renewcommand{\geq}{\geqslant}
\begin{document}
\title{Stabilization of a Wave-Heat Cascade System}
\author{Hugo Lhachemi, Christophe Prieur, and Emmanuel Tr{\'e}lat
\thanks{Hugo Lhachemi is with Universit{\'e} Paris-Saclay, CNRS, CentraleSup{\'e}lec, Laboratoire des signaux et syst\`emes, 91190, Gif-sur-Yvette, France (email: hugo.lhachemi@centralesupelec.fr).}
\thanks{Christophe Prieur is with Universit\'e Grenoble Alpes, CNRS, Gipsa-lab, 38000 Grenoble, France (e-mail: christophe.prieur@gipsa-lab.fr).}
\thanks{Emmanuel Trélat is with Sorbonne Universit\'e, Universit\'e Paris Cit\'e, CNRS, Inria, Laboratoire Jacques-Louis Lions, LJLL, F-75005 Paris, France (e-mail: emmanuel.trelat@sorbonne-universite.fr).}}

\date{}

\maketitle

\begin{abstract}
We consider the output‑feedback stabilization of a one‑dimensional cascade coupling a reaction-diffusion equation and a wave equation through an internal term, with Neumann boundary control acting at the wave endpoint. Two measurements are available: the wave velocity at the controlled boundary and a temperature‑type observation of the reaction-diffusion component, either distributed or pointwise. Under explicit, necessary and sufficient conditions on the coupling and observation profiles, we show that the generator of the open‑loop system is a Riesz‑spectral operator. Exploiting this structure, we design a finite‑dimensional dynamic output‑feedback law, based on a finite number of parabolic modes, which achieves arbitrary exponential decay in both the natural energy space and a stronger parabolic norm. The construction relies on a spectral reduction and a Lyapunov argument in Riesz bases. We also extend the design to pointwise temperature or heat-flux measurements. 
\end{abstract}

\medskip
\noindent{\bf Keywords:}
PDE cascade, reaction-diffusion equation, wave equation, output-feedback.

~\newline\textbf{The present work is the result of the split of \cite{lhachemi2025controllability} into the feedback stabilization study reported in this paper and the study of the controllability properties of the cascade in the companion work \cite{LPT_ongoing}.}

\section{Introduction}\label{sec:introduction}

\subsection{Studied problem}

We fix $L>0$, $c\in\mathbb{R}$ and $\beta\in L^\infty(0,L)$. We consider the cascade system
\begin{subequations}\label{eq: cascade equation}
    \begin{align}
        \partial_t y(t,x) &= \partial_{xx} y(t,x) + c y(t,x) + \beta(x) z(t,x) , \label{eq: cascade equation - 1} \\
        \partial_{tt} z(t,x) &= \partial_{xx} z(t,x) , \label{eq: cascade equation - 2}
    \end{align}
for $t > 0$ and $x \in (0,L)$, with boundary conditions
    \begin{align}
        & y(t,0) = y(t,L) = 0 , \label{eq: cascade equation - 3} \\ 
        & z(t,0) = 0 , \ \ \partial_x z(t,L) = u(t) . \label{eq: cascade equation - 4} 
    \end{align}
\end{subequations}
The state $y(t,\cdot)$ solves a one‑dimensional reaction-diffusion equation, forced by the wave state $z(t,\cdot)$ that solves a one‑dimensional wave equation. The control input $u$ acts on the right Neumann trace of $z$. Such wave-reaction-diffusion cascades appear for instance in simplified models of microwave heating, see e.g.~\cite{hill1996modelling, wei2012optimal, zhong2014state, celuch2009modeling}.
We assume that two outputs are available: the first one is the wave velocity at the right endpoint,
\begin{equation}\label{eq: measurement wave}
    z_o(t) = \partial_t z(t,L) ,
\end{equation} 
and the second one is a heat measurement of the form
\begin{equation}\label{eq: measurement heat distributed}
    y_o(t) = \int_0^L c_o(x) y(t,x) \,\mathrm{d}x 
\end{equation}
where $c_o \in L^2(0,L)$ models a weighted temperature observation.

\subsection{State-of-the-art and contribution}
To the best of our knowledge, controllability and stabilization results for coupled PDEs of different types (such as heat-wave cascades) remain rather scarce. This contrasts with the rich theory available for coupled parabolic systems; see for instance \cite{boyer,bhandari2021boundary,lhachemi2025boundary} and the references therein.  The works \cite{zhang2003polynomial, zhang2004polynomial} on a heat-wave system coupled at the boundary have strongly inspired the present paper. Hyperbolic-elliptic couplings have been investigated in \cite{rosier2013unique, chowdhury2023boundary}. Backstepping‑based stabilization has been developed in \cite{chen2017backstepping, ghousein2020backstepping} for hyperbolic-parabolic systems where the hyperbolic component is a transport equation.

To the best of the authors’ knowledge, the present paper is the first to address the design of an explicit output‑feedback controller for a one‑dimensional wave-heat cascade. Our main contribution is the construction of a finite‑dimensional dynamic output‑feedback law ensuring exponential stabilization of the full cascade \eqref{eq: cascade equation}, with an arbitrary prescribed decay rate, under spectral controllability and observability conditions that are explicit, necessary, and sufficient. The key structural ingredient is that the generator of the closed‑loop system is a Riesz‑spectral operator. We establish the following result that, at this stage, we state in a rather informal way.

\begin{theorem}\label{thm: informal}
Given any $\delta>0$, under explicit controllability and observability conditions, there exists an output-feedback control, using the measurements \eqref{eq: measurement wave} and \eqref{eq: measurement heat distributed}, explicitly built from a finite number of modes (depending on $\delta$), such that the closed-loop controlled system \eqref{eq: cascade equation} is exponentially stable in $\mathcal{H}^0 = L^2(0,L) \times H_{(0)}^1(0,L) \times L^2(0,L)$ and also in $\mathcal{H}^1 = H_0^1(0,L) \times H_{(0)}^1(0,L) \times L^2(0,L)$, with the decay rate $\delta$. 
\end{theorem}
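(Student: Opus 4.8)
The plan is to recast the cascade \eqref{eq: cascade equation} as an abstract boundary control system $\dot{X}=\mathcal{A}X+\mathcal{B}u$, $y_m=\mathcal{C}X$ on the state space $\mathcal{H}^0$, lift the Neumann control to the dynamics by a standard change of variables, and exploit the announced Riesz-spectral structure of $\mathcal{A}$ to reduce the stabilization problem to a finite-dimensional one. First I would diagonalize $\mathcal{A}$: the wave block $\partial_{tt}z=\partial_{xx}z$ with $z(\cdot,0)=0$, $\partial_x z(\cdot,L)=0$ has eigenfrequencies $\omega_k=\frac{(2k-1)\pi}{2L}$ and eigenfunctions $\sin(\omega_k x)$, giving a sequence of purely imaginary pairs $\pm i\omega_k$; the parabolic block $\partial_{xx}+c$ with Dirichlet conditions has eigenvalues $\lambda_n=c-\frac{n^2\pi^2}{L^2}$ with eigenfunctions $\sin(\frac{n\pi x}{L})$. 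Because the coupling is lower-triangular (the wave feeds the heat equation but not conversely), the spectrum of $\mathcal{A}$ is the union of the two spectra, and the corresponding generalized eigenvectors form a Riesz basis — this is exactly the structural statement invoked in the introduction, which I would cite rather than reprove. Only finitely many parabolic eigenvalues lie to the right of $-\delta$, and all wave eigenvalues sit on the imaginary axis, so infinitely many modes are uncontrolled by the open loop but already decay or need moving.

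The core of the argument is a spectral truncation. Fix $N$ large enough that $\mathrm{Re}\,\lambda_n<-\delta$ for all $n>N$ and that all wave modes are handled; decompose $X=X_N+X_{\mathrm{res}}$ along the Riesz basis, where $X_N$ collects the finitely many "bad" parabolic modes together with a finite block of wave modes (those needed to be damped below $-\delta$; in fact all wave modes must be damped, so here the standard device is to damp a finite number of wave modes through the velocity feedback and handle the infinite tail via the intrinsic energy decay granted by the boundary damping structure — I return to this below). On the finite block one gets a pair of finite-dimensional linear systems $(A_N,B_N,C_N)$ for the parabolic part and the wave part; the explicit controllability/observability conditions from the hypotheses guarantee that each finite pair is controllable and observable, so by pole placement one designs a finite-dimensional Luenberger observer plus state feedback (the observer uses the outputs \eqref{eq: measurement wave}, \eqref{eq: measurement heat distributed}) placing all closed-loop finite-dimensional eigenvalues strictly left of $-\delta$. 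The resulting controller is finite-dimensional and dynamic, as claimed.

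The remaining and genuinely delicate step is the stability analysis of the full infinite-dimensional closed loop, carried out via a Lyapunov functional built in the Riesz basis: $V=V_N+\sum_{k>N}\gamma_k|\langle X,\phi_k\rangle|^2$ with carefully chosen weights $\gamma_k$, where $V_N$ is a quadratic Lyapunov function certifying the $-\delta$ decay of the finite-dimensional augmented (plant-plus-controller) system. Differentiating along trajectories, the finite block contributes $-2\delta V_N$ plus cross terms coupling $X_N$ to the residual modes through $B_N$ and $C_N$; the residual parabolic modes contribute $\sum_{n>N}2(\lambda_n+\delta)\gamma_n|\cdot|^2\le 0$ with room to absorb cross terms since $\lambda_n\to-\infty$; the residual wave modes are the obstacle, since $\mathrm{Re}(i\omega_k)=0$ gives no free decay. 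Here one must use that the velocity feedback $u=-\alpha\,\partial_t z(t,L)+(\text{controller output})$ makes the wave operator with that boundary damping generate an exponentially stable (indeed, with rate $-\delta$ after rescaling $c$, or by adding a finite-rank correction) semigroup on the wave component — a known fact for the 1-D wave equation with boundary velocity feedback, whose decay rate can be made arbitrarily large. I expect the main obstacle to be precisely reconciling the finite-dimensional pole placement with this infinite wave tail: one must choose the feedback so that it simultaneously (i) damps the finitely many wave modes entering $X_N$ below $-\delta$ and (ii) does not destabilize, and in fact uniformly damps, the infinite remaining wave spectrum, then close the Lyapunov estimate with a small-gain/absorption argument (Young's inequality on the cross terms, using $|\lambda_n|^{-1}$ smallness) to obtain $\dot V\le -2\delta V$. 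The extension to $\mathcal{H}^1$ follows by the same computation with the equivalent graph norm of $\mathcal{A}$ restricted to the parabolic component, noting the controller is unchanged; the pointwise-measurement variant is handled by checking the analogous explicit observability condition for the point-evaluation functional, which is admissible for the parabolic modes.
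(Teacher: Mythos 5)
Your overall architecture (boundary velocity feedback, Riesz-spectral reduction, finite-dimensional observer-based controller for the unstable parabolic modes, Lyapunov functional with tail absorption) is the right one, but the step you yourself flag as ``the main obstacle'' --- the infinite hyperbolic tail --- is not actually resolved, and the way you set it up makes it unresolvable. You diagonalize the \emph{open-loop} operator, with the undamped Neumann wave block giving eigenvalues $\pm i\omega_k$ on the imaginary axis, and then propose a diagonal Lyapunov functional $V_N+\sum_{k>N}\gamma_k|\langle X,\phi_k\rangle|^2$ in that basis while invoking, separately, the exponential decay of the boundary-damped wave semigroup. These two pictures are incompatible: once $u=-\alpha\partial_t z(t,L)+v$ is applied, the damping is an unbounded boundary perturbation that is \emph{not} diagonal in the undamped eigenbasis, it couples all wave modes, and your diagonal tail terms contribute zero decay ($\operatorname{Re}(i\omega_k)=0$), so the Young/absorption step cannot close $\dot V\leq -2\delta V$. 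Likewise, the idea of ``damping a finite number of wave modes through the velocity feedback'' and including a finite block of wave modes in the pole-placement is not how the mechanism works: the static gain $\alpha$ damps \emph{all} wave modes simultaneously, and no wave mode is ever assigned by the finite-dimensional feedback.

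The paper's resolution is to incorporate the velocity feedback into the generator \emph{first}: the operator $\mathcal{A}$ is defined with the damped boundary condition $g'(L)+\alpha h(L)=0$ in its domain, its eigenstructure is computed explicitly (the entire hyperbolic spectrum sits on the vertical line $\operatorname{Re}\lambda_{2,m}=\tfrac{1}{2L}\log\tfrac{\alpha-1}{\alpha+1}$, which is pushed below $-\delta$ by choosing $\alpha$ close to $1$), and the Riesz basis property of these \emph{damped} eigenvectors is proved via Bari's theorem, using decay estimates on the coupling-induced first components $\phi^1_{2,m}$. In that basis each residual wave mode decays at rate $\rho<-\delta$ individually, so the tail estimate is immediate; the extra input $v$ (after the lifting $v_d=\dot v$, giving the augmented pair $(A_1,B_1)$) only has to stabilize the finitely many unstable parabolic modes, and wave modes enter the design solely through the observer, to account for their contribution to the scalar heat measurement (the corresponding error dynamics $\dot E_2=A_2E_2$ are already stable). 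To repair your proposal you would need to carry out exactly this preliminary-feedback spectral analysis (the paper's Lemmas~\ref{lem: eigenstructures of A}--\ref{lemma: Riesz basis H1}), including the nondegeneracy condition \eqref{eq: condition for selecting alpha} ensuring a simple spectrum and the $O(1/m^2)$ bounds that make Bari's criterion work; as written, citing the ``known'' exponential stability of the damped wave equation in a basis where that decay is invisible leaves the closed-loop estimate unproved.
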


The precise statement is given in Theorem~\ref{thm: main result 2} in Section~\ref{sec: control design}. Based on spectral considerations, we design a Lyapunov function and build the feedback control using only a finite number of modes. We also extend the result of Theorem~\ref{thm: informal} to the case of a pointwise temperature measurement $y_o(t) = y(t,\xi_p)$, or a pointwise heat flux measurement $y_o(t) = \partial_x y(t,\xi_p)$, at some location $\xi_p \in [0,L]$.

\subsection{Paper organization and technical outline}

The paper is organized as follow. In Section~\ref{sec: prel control and spectral prop}, we design a preliminary boundary velocity feedback acting on the wave equation and analyze the spectrum of the resulting generator. Section~\ref{sec: spectral reduction} is devoted to the spectral reduction of the problem and to the derivation of finite‑dimensional models capturing the unstable parabolic modes. The output‑feedback design and the stability analysis of the closed‑loop system are carried out in Section~\ref{sec: control design}. In Section~\ref{sec: extemsion} we extend the results from distributed temperature measurements to pointwise temperature or heat‑flux measurements. Section~\ref{sec: numerical} presents a numerical example illustrating the performance of the proposed controller. Section~\ref{sec: Conclusion} gathers some concluding remarks and perspectives. In the Appendix, we provide further comments on the exact controllability properties of the wave-heat cascade.

For the reader's convenience, we provide below a brief technical outline of the main steps leading to our stabilization result (Theorem~\ref{thm: main result 2} in Section~\ref{sec: control design}):
\begin{itemize}
\item Using the boundary wave measurement \eqref{eq: measurement wave}, we first design a preliminary velocity feedback \eqref{eq: preliminary feedback} to stabilize the hyperbolic part and define the corresponding generator $\mathcal{A}$.
\item We compute the eigenvalues and eigenvectors of $\mathcal{A}$ (Lemma~\ref{lem: eigenstructures of A}).
\item We prove that the family of eigenvectors of $\mathcal{A}$ forms a Riesz basis $\Phi$ of the energy space $\mathcal{H}^0$, so that $\mathcal{A}$ is a Riesz‑spectral operator generating a $C_0$-semigroup (Lemma~\ref{lem: A riesz spectral}).
\item We compute the dual Riesz basis $\Psi$ by analysing the adjoint operator $\mathcal{A}^*$ (Lemma~\ref{lem: dual basis}).
\item For the parabolic component, we construct a Riesz basis $\Phi^1$ of a stronger Hilbert space $\mathcal{H}^1$ (Lemma~\ref{lemma: Riesz basis H1}).
\item We perform a spectral reduction of the closed‑loop system and identify finite‑dimensional models capturing the relevant parabolic modes (Subsections~\ref{sub sec: Finite-dimensional reduced model} and~\ref{subsec: finite-dim model}).
\item We derive necessary and sufficient controllability and observability conditions for these models (Lemmas~\ref{kalman:contro:lemma}-\ref{lem: obs for cont}).
\item We introduce a finite‑dimensional output‑feedback controller based on a Luenberger‑type observer and a state‑feedback on the truncated model (Section~\ref{sec: control design}).
\item Finally, we prove exponential stability of the full PDE cascade in both $\mathcal{H}^0$ and $\mathcal{H}^1$ norms by means of a Lyapunov functional built in the Riesz bases (Theorem~\ref{thm: main result 2}). 
\end{itemize}

\section{Preliminary velocity feedback and spectral properties}\label{sec: prel control and spectral prop}

\subsection{Preliminary velocity feedback}

The control strategy consists of first shifting the spectrum of the wave equation \eqref{eq: cascade equation - 2}, by applying a velocity feedback on the right Neumann trace \eqref{eq: cascade equation - 4}, namely, by setting
\begin{equation}\label{eq: preliminary feedback}
    u(t) 
    = -\alpha z_o(t) + v(t)
    = -\alpha \partial_t z(t,L) + v(t)
\end{equation}
where $v$ is another control to be designed, and $\alpha > 1$ is chosen such that (see Remark~\ref{rem: simple eigenvalues})
\begin{equation}\label{eq: condition for selecting alpha}
    c \neq \tfrac{1}{2L} \log\big(\tfrac{\alpha-1}{\alpha+1}\big) + \tfrac{n^2\pi^2}{L^2}  \qquad \forall n\in\mathbb{N}^{*} .
\end{equation}
When $v=0$, the output-feedback $u$ given by \eqref{eq: preliminary feedback} is known to stabilize the wave equation \eqref{eq: cascade equation - 2}. The additional control $v$ is going to be used and designed from the output \eqref{eq: measurement heat distributed} in order to stabilize the full cascade system \eqref{eq: cascade equation}. Hence, with \eqref{eq: preliminary feedback}, the system is now
\begin{subequations}\label{eq: cascade equation - premilinary feedback}
    \begin{align}
        &\partial_t y(t,x) = \partial_{xx} y(t,x) + c y(t,x) + \beta(x) z(t,x) , \label{eq: cascade equation - premilinary feedback - 1} \\
        &\partial_{tt} z(t,x) = \partial_{xx} z(t,x) , \label{eq: cascade equation - premilinary feedback - 2} \\
        & y(t,0) = y(t,L) = 0 , \label{eq: cascade equation - premilinary feedback - 3} \\ 
        & z(t,0) = 0 , \quad \partial_x z(t,L) + \alpha \partial_t z(t,L) = v(t) , \label{eq: cascade equation - premilinary feedback - 4} 
    \end{align}
\end{subequations}
and the new control is $v$. 

To design an effective feedback control $v$ based on a finite number of modes, we follow a spectral reduction approach developed in \cite{coron2004global, coron2006global, lhachemi2020pi, russell1978controllability} in other contexts. In turn, we will design a Lyapunov function.

We define the Hilbert space of complex-valued functions
\begin{subequations}\label{eq: state-space}
    \begin{equation}
        \mathcal{H}^0 = L^2(0,L) \times H_{(0)}^1(0,L) \times L^2(0,L)
    \end{equation}
    where $H_{(0)}^1(0,L) = \{ g \in H^1(0,L)\, \mid\, g(0) = 0 \}$, with the inner product
    \begin{equation}
	\big\langle (f_1,g_1,h_1),(f_2,g_2,h_2)\big\rangle_{\mathcal{H}^0}
        = \int_0^L (f_1 \overline{f_2} + g'_1 \overline{g'_2} + h_1 \overline{h_2} )
    \end{equation}
\end{subequations}
and corresponding norm denoted $\Vert \cdot \Vert_{\mathcal{H}^0}$. When the context is clear, we simply denote by $\langle \cdot , \cdot \rangle$ the inner product of $\mathcal{H}_0$. Setting $X=(y,z,\partial_t z)^\top$, the control system \eqref{eq: cascade equation - premilinary feedback} is written in the abstract form $\dot{X}(t)=\mathcal{A}X(t)+\mathcal{B}v(t)$, where the operator $\mathcal{A}:D(\mathcal{A}) \subset\mathcal{H}^0 \rightarrow\mathcal{H}^0$ is defined by
\begin{subequations}\label{def_A}
\begin{equation}
    \mathcal{A} = \begin{pmatrix} \partial_{xx} + c\,\mathrm{id} & \beta\,\mathrm{id} & 0 \\ 0 & 0 & \mathrm{id} \\ 0 & \partial_{xx} & 0 \end{pmatrix}
\end{equation}
($\mathrm{id}$ denotes the identity operator on $L^2(0,L)$)
and domain
\begin{multline}
  \!\!\!  D(\mathcal{A}) = \big\{ (f,g,h)\in H^2(0,L) \times H^2(0,L) \times H^1(0,L)\, \mid\,  \\
    f(0) \!=\! f(L) \!=\! g(0) \!=\! h(0) \!=\! 0 , \   g'(L) \!+\! \alpha h(L) \!=\! 0 \big\} ,
\end{multline}
\end{subequations}
and the control operator $\mathcal{B}$ 
is defined by transposition (see Appendix~\ref{sec_app_A}).

\subsection{Spectral properties of $\mathcal{A}$}\label{sec_prelim_A}

\begin{lemma}\label{lem: eigenstructures of A}
The eigenvalues of $\mathcal{A}$ are 
\begin{align*}
& \lambda_{1,n} = c - \tfrac{n^2 \pi^2}{L^2}, \quad n\in\mathbb{N}^*, \\ 
& \lambda_{2,m} = \tfrac{1}{2L} \log\big(\tfrac{\alpha-1}{\alpha+1}\big) + i \tfrac{m\pi}{L} , \quad m \in\mathbb{Z},
\end{align*}
with associated eigenvectors $\phi_{1,n}=(\phi^1_{1,n},\phi^2_{1,n},\phi^3_{1,n})$ and $\phi_{2,m}=(\phi^1_{2,m},\phi^2_{2,m},\phi^3_{2,m})$ respectively given by
\begin{align*}
\phi^1_{1,n}(x) = & \sqrt{\tfrac{2}{L}} \sin\big(\tfrac{n\pi}{L}x\big), \quad \phi^2_{1,n}(x) = 0, \quad \phi^3_{1,n}(x) = 0 , \\
\phi^1_{2,m}(x) = & \tfrac{1}{A_m r_m} \int_x^L \beta(s) \sinh(\lambda_{2,m} s) \sinh(r_m (x-s)) \,\mathrm{d}s \\
& \; + \tfrac{\sinh(r_m (L-x))}{A_m r_m \sinh(r_m L)} \int_0^L \beta(s) \sinh(\lambda_{2,m} s) \sinh(r_m s) \,\mathrm{d}s 
\end{align*}
where $r_m$ is a square root of $\lambda_{2,m} - c$ with $\operatorname{Re}(r_m)\geq 0$,
\begin{align*}
&\phi^2_{2,m}(x) = \tfrac{1}{A_m}\sinh(\lambda_{2,m} x) , \quad \phi^3_{2,m}(x) = \tfrac{\lambda_{2,m}}{A_m} \sinh(\lambda_{2,m} x)  , \\
&A_m = \tfrac{\sqrt{(\mu^2 L^2 + m^2 \pi^2)\sinh(2\mu L)}}{L\sqrt{2\mu}} , \quad
\mu = - \tfrac{1}{2L} \log\big(\tfrac{\alpha-1}{\alpha+1}\big) > 0 .
\end{align*}
\end{lemma}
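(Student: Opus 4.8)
The plan is to compute the spectrum of $\mathcal{A}$ by solving the eigenvalue equation $\mathcal{A}\Phi=\lambda\Phi$ directly for $\Phi=(f,g,h)\in D(\mathcal{A})$ and $\lambda\in\mathbb{C}$. Written componentwise, this reads $f''+cf+\beta g=\lambda f$, $h=\lambda g$, and $g''=\lambda h$. First I would substitute the last two relations to obtain the decoupled scalar problem $g''=\lambda^2 g$ with $g(0)=0$ (the constraint $h(0)=0$ then being automatic), so that $g(x)=C\sinh(\lambda x)$ for some constant $C$ (and $g(x)=Cx$ in the degenerate case $\lambda=0$). The remaining boundary condition $g'(L)+\alpha h(L)=0$ becomes $C\lambda\big(\cosh(\lambda L)+\alpha\sinh(\lambda L)\big)=0$, which splits the analysis into two exhaustive cases.

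In the case $C=0$ (which in particular covers $\lambda=0$), one has $g\equiv h\equiv 0$ and $f$ solves the Dirichlet eigenvalue problem $f''+(c-\lambda)f=0$, $f(0)=f(L)=0$; this forces $\lambda=\lambda_{1,n}=c-n^2\pi^2/L^2$ with $f$ proportional to $\sin(n\pi x/L)$, $n\in\mathbb{N}^*$, and normalizing in $L^2(0,L)$ (equivalently in $\mathcal{H}^0$, the other components vanishing) yields $\phi_{1,n}$. In the complementary case $\cosh(\lambda L)+\alpha\sinh(\lambda L)=0$, i.e.\ $e^{2\lambda L}=(\alpha-1)/(\alpha+1)$, I would use $\alpha>1$ to note that this ratio lies in $(0,1)$, whence $\lambda=\lambda_{2,m}=\tfrac{1}{2L}\log\tfrac{\alpha-1}{\alpha+1}+\tfrac{im\pi}{L}$, $m\in\mathbb{Z}$, with $\operatorname{Re}\lambda_{2,m}=-\mu<0$. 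For such $\lambda$ one gets $\phi^2_{2,m}=C\sinh(\lambda_{2,m}\cdot)$ and $\phi^3_{2,m}=\lambda_{2,m}C\sinh(\lambda_{2,m}\cdot)$, and I would fix $C=:1/A_m$ by the normalization $\int_0^L(|\phi^2_{2,m}{}'|^2+|\phi^3_{2,m}|^2)=1$; using $|\sinh((a+ib)x)|^2=\tfrac12(\cosh 2ax-\cos 2bx)$ and $|\cosh((a+ib)x)|^2=\tfrac12(\cosh 2ax+\cos 2bx)$ with $a=-\mu$, $b=m\pi/L$, this collapses to $\tfrac{|\lambda_{2,m}|^2}{|A_m|^2}\int_0^L\cosh(2\mu x)\,dx=1$ and gives the stated $A_m$.

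It then remains to solve the first equation for $f=\phi^1_{2,m}$, namely $f''-(\lambda_{2,m}-c)f=-\tfrac{1}{A_m}\beta\sinh(\lambda_{2,m}\cdot)$ with $f(0)=f(L)=0$. I would let $r_m$ be the square root of $\lambda_{2,m}-c$ with $\operatorname{Re}r_m\geq 0$, observe that condition~\eqref{eq: condition for selecting alpha} is exactly what ensures $r_m\neq 0$ and $\sinh(r_m L)\neq 0$ (so the associated homogeneous Dirichlet problem is nondegenerate and $f$ is unique), and solve by variation of parameters: differentiating twice shows that $p(x):=\tfrac{1}{A_m r_m}\int_x^L\beta(s)\sinh(\lambda_{2,m}s)\sinh(r_m(x-s))\,ds$ solves the inhomogeneous ODE and satisfies $p(L)=0$, and adding the homogeneous solution $c_m\sinh(r_m(L-x))$ with $c_m$ chosen so that $p(0)+c_m\sinh(r_m L)=0$ produces precisely $\phi^1_{2,m}$ (with $f(L)=0$ automatic). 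Finally I would check the converse: each listed pair lies in $D(\mathcal{A})$ and satisfies $\mathcal{A}\Phi=\lambda\Phi$ by construction, so the point spectrum is exactly $\{\lambda_{1,n}\}\cup\{\lambda_{2,m}\}$.

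I do not expect a genuine obstacle: the argument is a chain of elementary ODE resolutions. The points requiring care are the exhaustiveness of the case split (and checking that $\lambda=0$ indeed falls into the first case), the correct branch choices for the logarithm and for $r_m$, the algebra behind $A_m$ with complex hyperbolic functions, and the bookkeeping in the variation-of-parameters formula for $\phi^1_{2,m}$ — which is cleanest to verify \emph{a posteriori} by direct differentiation — together with the verification that \eqref{eq: condition for selecting alpha} keeps $\phi^1_{2,m}$ well defined.
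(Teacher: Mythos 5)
Your proposal is correct and follows essentially the same route as the paper's proof: write out $\mathcal{A}(f,g,h)=\lambda(f,g,h)$, split according to whether the wave component vanishes (which yields the Dirichlet eigenpairs $\lambda_{1,n}$, $\phi_{1,n}$) or $\cosh(\lambda L)+\alpha\sinh(\lambda L)=0$ (which yields $\lambda_{2,m}$), then solve the forced equation for $\phi^1_{2,m}$ by variation of parameters, using condition \eqref{eq: condition for selecting alpha} to guarantee $r_m\neq 0$ and $\sinh(r_m L)\neq 0$, exactly as in the paper. The only difference is cosmetic and in your favor: you derive $A_m$ explicitly as the normalization constant making $\int_0^L(|(\phi^2_{2,m})'|^2+|\phi^3_{2,m}|^2)=1$ (and your computation does reproduce the stated $A_m$), whereas the paper fixes the eigenvector up to scale ($\delta=1/2$) and leaves the normalization implicit in the statement.
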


\begin{remark}\label{rem: simple eigenvalues}
The constraint \eqref{eq: condition for selecting alpha} is introduced to avoid that the real eigenvalue $\lambda_{2,0}$ associated with the wave equation coincide with an eigenvalue $\lambda_{1,n}$ of the reaction-diffusion equation. When equality holds for some $n\in\mathbb{N}^*$, the eigenvalue $\lambda_{2,0} = \lambda_{1,n}$ is of geometric multiplicity one but of algebraic multiplicity two, leading to a Jordan block of dimension 2. Since all other eigenvalues are simple, our approach works. It also applies to the case $\alpha \in (0,1)$ with the change $\lambda_{2,m} = \tfrac{1}{2L} \log\big(\tfrac{1-\alpha}{1+\alpha}\big) + i \tfrac{(2m+1)\pi}{2L}$ for $m \in\mathbb{Z}$.
\end{remark}

\begin{proof}
Let $\lambda\in\mathbb{C}$ and $(f,g,h)\in D(\mathcal{A})$ be such that $\mathcal{A}(f,g,h)=\lambda (f,g,h)$, i.e., $f,g \in H^2(0,L)$ and $h\in H^1(0,L)$ such that
\begin{subequations}\label{eq: lemma eigenstructures A - system to solve}
    \begin{align}
        & f''+cf+\beta g = \lambda f , \label{eq: lemma eigenstructures A - system to solve - 1} \\
        & h = \lambda g , \label{eq: lemma eigenstructures A - system to solve - 2} \\
        & g'' = \lambda h = \lambda^2 g , \label{eq: lemma eigenstructures A - system to solve - 3} \\
        & f(0) = f(L) = g(0) = h(0) = 0 , \label{eq: lemma eigenstructures A - system to solve - 4} \\
        & g'(L) + \alpha h(L) = g'(L) + \alpha \lambda g(L) = 0 . \label{eq: lemma eigenstructures A - system to solve - 5}
    \end{align}
\end{subequations}
Assume first that $\lambda = 0$. By \eqref{eq: lemma eigenstructures A - system to solve - 3} we have $g''=0$ with $g(0) = g'(L) = 0$, implying along with \eqref{eq: lemma eigenstructures A - system to solve - 2} that $g=h=0$. Hence, from \eqref{eq: lemma eigenstructures A - system to solve - 1}, $f''+cf=0$ and $f(0)=f(L)=0$, from which we deduce that $\tfrac{n^2 \pi^2}{L^2} = c$ and $f(x) = \sqrt{\tfrac{2}{L}}\sin\big(\tfrac{n\pi}{L}x\big)$ for some $n\in\mathbb{N}^*$. In particular, $\lambda = c - \tfrac{n^2\pi^2}{L^2} = 0$.

Assume now that $\lambda \neq 0$. Using \eqref{eq: lemma eigenstructures A - system to solve - 3} and the fact that $g(0)=0$, we have $g(x) = \delta ( e^{\lambda x} - e^{-\lambda x} )$ for some $\delta\in\mathbb{R}$. Furthermore, by \eqref{eq: lemma eigenstructures A - system to solve - 5},
$0 = g'(L) + \alpha \lambda g(L) = \lambda \delta ( e^{\lambda L} + e^{-\lambda L} + \alpha ( e^{\lambda L} - e^{-\lambda L} ) )$.
Since $\lambda \neq 0$, there are two cases. 

If $\delta = 0$ then $g=h=0$ and, by \eqref{eq: lemma eigenstructures A - system to solve - 1}, $f''=(\lambda-c)f$ and $f(0)=f(L)=0$ hence $\lambda = c - \tfrac{n^2\pi^2}{L^2}$ and $f(x) = \sqrt{\tfrac{2}{L}}\sin\big(\tfrac{n\pi}{L}x\big)$ for some $n\in\mathbb{N}^*$ provided $\lambda \neq 0$. 

If $\delta \neq 0$ then $e^{\lambda L} + e^{-\lambda L} + \alpha ( e^{\lambda L} - e^{-\lambda L} ) = 0$, which is equivalent to $e^{2\lambda L} = \tfrac{\alpha-1}{\alpha+1}$. Since $\alpha > 1$, this gives $\lambda = \tfrac{1}{2L} \log\big(\tfrac{\alpha-1}{\alpha+1}\big) + i \tfrac{m\pi}{L}$ for some $m \in\mathbb{Z}$. Moreover $g(x) = \sinh(\lambda x)$ and $h(x) = \lambda \sinh(\lambda x)$, where we take (without loss of generality) $\delta=1/2$. Finally, $f''+(c-\lambda)f=-\beta(x)g$ with $f(0)=f(L)=0$. Recalling that $\alpha > 1$ is selected so that $c \neq \tfrac{1}{2L} \log\big(\tfrac{\alpha-1}{\alpha+1}\big)$, we have $\lambda-c \neq 0$. Hence, denoting by $r$ one of its two distinct square roots, i.e., $r^2 = \lambda - c$ with $r \neq 0$, we obtain
$f(x) = \big( \delta_1 - \tfrac{1}{2r} \int_L^x \beta(s) g(s) e^{-r s} \,\mathrm{d}s \big) e^{r x} + \big( \delta_2 + \tfrac{1}{2r} \int_L^x \beta(s) g(s) e^{r s} \,\mathrm{d}s \big) e^{-r x}$
for some constants $\delta_1,\delta_2\in\mathbb{C}$ that must be selected such that $f(0)=0$ and $f(L)=0$. The latter equation yields $\delta_2 = - \delta_1 e^{2rL}$ , implying that
$f(x) = 2 \delta_1 e^{rL} \sinh(r(x-L)) - \tfrac{1}{r} \int_L^x \beta(s) g(s) \sinh(r(x-s)) \,\mathrm{d}s$.
Then, $f(0)=0$ gives $-2 \delta_1 e^{rL} \sinh(rL) - \tfrac{1}{r} \int_0^L \beta(s) g(s) \sinh(rs) \,\mathrm{d}s = 0$. We note that $\sinh(rL)=0$ if and only if $e^{2rL}=1$, i.e., if and only if $2rL=2ik\pi$ for some $k\in\mathbb{Z}$. Then, we must have $\lambda = c + r^2 = c - \tfrac{k^2 \pi^2}{L^2}$. Since $\lambda = \tfrac{1}{2L} \log\big(\tfrac{\alpha-1}{\alpha+1}\big) + i \tfrac{m\pi}{L}$, this is possible only if $m=0$, which contradicts the assumption that $\alpha > 1$ has been selected so that $c \neq \tfrac{1}{2L} \log\big(\tfrac{\alpha-1}{\alpha+1}\big) + \tfrac{k^2\pi^2}{L^2}$ for any $k\in\mathbb{N}$. Hence, $\sinh(rL) \neq 0$, which gives $\delta_1 = - \tfrac{1}{2 e^{rL} r \sinh(r L)} \int_0^L \beta(s) g(s) \sinh(rs) \,\mathrm{d}s$. 
\end{proof}

\begin{remark}\label{rem_spectrum}
The sequence of eigenvalues $(\lambda_{1,n})_{n\in\mathbb{N}^*}$ (resp., $(\lambda_{1,m})_{m\in\mathbb{Z}}$) is associated with the reaction-diffusion part \eqref{eq: cascade equation - 1} (resp., the wave part \eqref{eq: cascade equation - 2}) of the system \eqref{eq: cascade equation} and is referred to as the \emph{parabolic spectrum} (resp., the \emph{hyperbolic spectrum}). 
\end{remark}

\begin{lemma}\label{lem: A riesz spectral}
The family of generalized eigenvectors $\Phi = \{\phi_{1,n}\, \mid\, n\in\mathbb{N}^* \} \cup \{\phi_{2,m}\, \mid\, m \in\mathbb{Z} \}$ is a Riesz basis of $\mathcal{H}^0$. Hence, $\mathcal{A}$ is a Riesz spectral operator that generates a $C_0$-semigroup.
\end{lemma}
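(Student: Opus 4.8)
The plan is to exploit the block-triangular structure of $\mathcal{A}$ together with the explicit eigenvectors of Lemma~\ref{lem: eigenstructures of A}, so that the Riesz-basis property of $\Phi$ reduces to the Riesz-basis property of the wave-part eigenvectors alone, plus a summability estimate on the parabolic corrections $\phi^1_{2,m}$; the coupling term then amounts only to a bounded invertible triangular change of coordinates. Write $\mathcal{H}^0 = L^2(0,L)\oplus\mathcal{H}_w$, with $\mathcal{H}_w = H^1_{(0)}(0,L)\times L^2(0,L)$ carrying the wave energy inner product. With respect to this splitting $\mathcal{A}$ is upper triangular, $\mathcal{A} = \left(\begin{smallmatrix}\mathcal{A}_p & \mathcal{C}\\ 0 & \mathcal{A}_w\end{smallmatrix}\right)$, where $\mathcal{A}_p = \partial_{xx} + c\,\mathrm{id}$ with Dirichlet conditions is self-adjoint with orthonormal eigenbasis $e_n = \sqrt{2/L}\sin(n\pi\cdot/L)$ (eigenvalues $\lambda_{1,n}$), $\mathcal{A}_w$ is the boundary-damped wave generator with eigenvectors $\psi_m := A_m^{-1}(\sinh(\lambda_{2,m}\cdot),\,\lambda_{2,m}\sinh(\lambda_{2,m}\cdot))$, and $\mathcal{C}(g,h) = \beta g$ is bounded from $\mathcal{H}_w$ to $L^2(0,L)$ (by Poincar\'e). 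Then $\phi_{1,n} = (e_n,0,0)$, while the eigenvector relation of Lemma~\ref{lem: eigenstructures of A} amounts, upon identifying $\psi_m$ with $(\phi^2_{2,m},\phi^3_{2,m})$, to $(\mathcal{A}_p - \lambda_{2,m})\phi^1_{2,m} = -\mathcal{C}\psi_m$, i.e.\ $\phi_{2,m} = \big((\lambda_{2,m} - \mathcal{A}_p)^{-1}\mathcal{C}\psi_m,\,\psi_m\big)$, the inverse existing since $\lambda_{2,m}\notin\sigma(\mathcal{A}_p) = \{\lambda_{1,n}\}$ under \eqref{eq: condition for selecting alpha}; the closed-form expression of $\phi^1_{2,m}$ in Lemma~\ref{lem: eigenstructures of A} is precisely this resolvent written through the Dirichlet Green's function.

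The central point is that $\{\psi_m\}_{m\in\mathbb{Z}}$ is a Riesz basis of $\mathcal{H}_w$. I would prove it by passing to Riemann invariants: the map $J(g,h) = (h+g',\,h-g')$ is $\sqrt{2}$ times a unitary from $\mathcal{H}_w$ onto $L^2(0,L)^2$ and sends $\psi_m$ to $\tfrac{\lambda_{2,m}}{A_m}\big(e^{\lambda_{2,m}\cdot},\,-e^{-\lambda_{2,m}\cdot}\big)$; composing with the unitary concatenation $K:L^2(0,L)^2\to L^2(-L,L)$ given by $K(p,q)(x) = p(x)$ for $x\in(0,L)$ and $K(p,q)(x) = -q(-x)$ for $x\in(-L,0)$ yields $KJ\psi_m(x) = \tfrac{\lambda_{2,m}}{A_m}\,e^{-\mu x}\,e^{im\pi x/L}$ on $(-L,L)$. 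Since $|\lambda_{2,m}/A_m|^2 = 2\mu/\sinh(2\mu L)$ is independent of $m$ --- which is exactly what the normalization constant $A_m$ is designed to ensure --- and $e^{-\mu x}$ is continuous and bounded above and away from $0$ on $[-L,L]$, the family $\{KJ\psi_m\}_{m\in\mathbb{Z}}$ is the image of the orthogonal basis $\{e^{im\pi\cdot/L}\}_{m\in\mathbb{Z}}$ of $L^2(-L,L)$ under a bounded invertible multiplication operator, hence a Riesz basis; pulling back through the bounded invertible operator $KJ$ gives the claim. (Alternatively, the Riesz-basis property of the eigenfunctions of a one-dimensional wave equation with boundary viscous damping is classical and could simply be cited.) Consequently $\{\phi_{1,n}\}_n\cup\{(0,\psi_m)\}_m$ is a Riesz basis of $\mathcal{H}^0$, being the union of an orthonormal basis of $L^2(0,L)\oplus\{0\}$ with a Riesz basis of $\{0\}\oplus\mathcal{H}_w$.

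It remains to absorb the coupling. Because $\mathcal{A}_p$ is self-adjoint with real spectrum, $\|(\lambda_{2,m} - \mathcal{A}_p)^{-1}\| = 1/\operatorname{dist}(\lambda_{2,m},\sigma(\mathcal{A}_p))\le L/(|m|\pi)$ for $m\neq 0$, while $\|\mathcal{C}\psi_m\|_{L^2}\le\|\beta\|_\infty\,\|A_m^{-1}\sinh(\lambda_{2,m}\cdot)\|_{L^2} = O(1/|m|)$; hence $\|\phi^1_{2,m}\|_{L^2} = O(1/m^2)$ and $\sum_m\|\phi^1_{2,m}\|_{L^2}^2 < \infty$ (the finitely many remaining terms being finite by \eqref{eq: condition for selecting alpha}). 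Letting $\{\tilde\psi_m\}$ denote the biorthogonal Riesz basis of $\{\psi_m\}$ in $\mathcal{H}_w$, the operator $S v := \sum_m \langle v,\tilde\psi_m\rangle_{\mathcal{H}_w}\,\phi^1_{2,m}$ is bounded from $\mathcal{H}_w$ to $L^2(0,L)$ --- by Cauchy--Schwarz, using $\sup_m\|\tilde\psi_m\|<\infty$ and $\sum_m|\langle v,\tilde\psi_m\rangle|^2\le C\|v\|^2$ --- and satisfies $S\psi_m = \phi^1_{2,m}$. Then $T(f,w) := (f + Sw,\,w)$ is a bounded invertible operator on $\mathcal{H}^0$ (with inverse $(f,w)\mapsto(f - Sw,\,w)$) mapping the Riesz basis $\{\phi_{1,n}\}\cup\{(0,\psi_m)\}$ onto $\Phi$; since a bounded invertible operator carries Riesz bases to Riesz bases, $\Phi$ is a Riesz basis of $\mathcal{H}^0$. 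Finally, $\mathcal{A}$ is diagonal in the Riesz basis $\Phi$, with simple eigenvalues under \eqref{eq: condition for selecting alpha} and $\sup_k\operatorname{Re}(\lambda_{\cdot,k}) = \max(c - \pi^2/L^2,\,-\mu) < \infty$, hence it is a Riesz-spectral operator and generates a $C_0$-semigroup.

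The main obstacle is the central point above, the Riesz-basis property of the wave eigenvectors $\{\psi_m\}$: either one invokes the classical boundary-damped-wave result, or one carries out the reduction to the weighted exponentials $\{e^{-\mu x}e^{im\pi x/L}\}$ via $J$ and $K$, which requires care in verifying that $J$ and $K$ are (multiples of) unitaries and that $\lambda_{2,m}/A_m$ has $m$-independent modulus; everything else is bookkeeping around the triangular structure. A secondary point is the non-generic configuration of Remark~\ref{rem: simple eigenvalues}: when $\lambda_{2,0}$ coincides with some $\lambda_{1,n_0}$, the pair $(\phi_{1,n_0},\phi_{2,0})$ must be replaced by an eigenvector together with a generalized eigenvector spanning the associated two-dimensional generalized eigenspace, but this is a rank-one modification of the above construction and affects neither the Riesz-basis conclusion nor the $C_0$-generation.
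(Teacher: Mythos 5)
Your proof is correct, but it follows a genuinely different route from the paper's in two respects. First, where the paper simply \emph{asserts} that $\{(\phi^2_{2,m},\phi^3_{2,m})\}_{m\in\mathbb{Z}}$ is a Riesz basis of $H^1_{(0)}(0,L)\times L^2(0,L)$ (treating the boundary-damped wave case as classical), you actually prove it, via the Riemann-invariant map $J$ and the concatenation $K$, reducing to the weighted exponentials $e^{-\mu x}e^{im\pi x/L}$ on $(-L,L)$; your verification that $|\lambda_{2,m}/A_m|$ is $m$-independent is exactly what the normalization $A_m$ encodes. Second, where the paper invokes Bari's theorem (quadratic closeness to the decoupled basis $\tilde\Phi$ plus $\omega$-independence) and obtains $\Vert\phi^1_{2,m}\Vert_{L^\infty}=O(1/m^2)$ by lengthy Green's-function/hyperbolic-function estimates on $f_{m,1},f_{m,2}$, you exploit the triangular structure: $\phi^1_{2,m}=(\lambda_{2,m}-\mathcal{A}_p)^{-1}\mathcal{C}\psi_m$, and the self-adjoint resolvent bound $\Vert(\lambda_{2,m}-\mathcal{A}_p)^{-1}\Vert\leq L/(|m|\pi)$ together with $\Vert\mathcal{C}\psi_m\Vert_{L^2}=O(1/|m|)$ gives $\Vert\phi^1_{2,m}\Vert_{L^2}=O(1/m^2)$ in two lines, and your bounded invertible map $T(f,w)=(f+Sw,w)$ transports the decoupled Riesz basis onto $\Phi$ directly, bypassing Bari's theorem and the separate $\omega$-independence check. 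What each buys: your argument is cleaner and more structural for this lemma; the paper's explicit pointwise estimates, by contrast, also yield $\Vert\phi^1_{2,m}\Vert_{L^\infty}=O(1/m^2)$ and $\Vert(\phi^1_{2,m})'\Vert_{L^\infty}=O(1/|m|^{3/2})$, which are reused later (Lemma~\ref{lemma: Riesz basis H1} and Theorem~\ref{thm4}) and are not immediate from your $L^2$ resolvent bound. Two small quibbles: in the boundedness of $S$, the relevant Cauchy--Schwarz pairing is between the coefficient sequence $(\langle v,\tilde\psi_m\rangle)_m$ (controlled by the dual-basis Bessel inequality) and the square-summable sequence $(\Vert\phi^1_{2,m}\Vert_{L^2})_m$ --- the bound $\sup_m\Vert\tilde\psi_m\Vert<\infty$ you cite is not the ingredient needed, though you did establish the right one; and in the reduction to exponentials, the factors $\lambda_{2,m}/A_m$ carry $m$-dependent phases, so $\{KJ\psi_m\}$ is not literally a fixed multiplication operator applied to $\{e^{im\pi\cdot/L}\}$, but since the moduli are constant the family remains orthogonal with norms bounded above and below, and the conclusion stands. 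Finally, note that under \eqref{eq: condition for selecting alpha} the exceptional Jordan-block configuration you discuss at the end does not occur, so that paragraph is only needed in the setting of Remark~\ref{rem: simple eigenvalues}.
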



\begin{proof}
Noting that $\{\phi^1_{1,n}\, \mid\, n\in\mathbb{N}^* \}$ is a Hilbert basis of $L^2(0,L)$ and that $\{ (\phi^2_{2,m},\phi^3_{2,m})\, \mid\, m\in\mathbb{Z} \}$ is a Riesz basis of $H_{(0)}^1(0,L) \times L^2(0,L)$ we infer, first, that $\Phi$ is $\omega$-linearly independent, and second, defining $\tilde{\phi}_{1,n} = \phi_{1,n}$ and $\tilde{\phi}_{2,m} = (0,\phi^2_{2,m},\phi^3_{2,m})$, that $\tilde{\Phi} = \{\tilde{\phi}_{1,n}\, \mid\, n\in\mathbb{N}^* \} \cup \{\tilde{\phi}_{2,m}\, \mid\, m \in\mathbb{Z} \}$ is a Riesz basis of $\mathcal{H}^0$. By Bari's theorem \cite{gohberg1978introduction}, it then follows that $\Phi$ is a Riesz basis provided that
\begin{equation}\label{bari_ineq}
\sum_{n\in\mathbb{N}^*} \Vert \phi_{1,n} - \tilde{\phi}_{1,n} \Vert_{\mathcal{H}^0}^2 + \sum_{m\in\mathbb{Z}} \Vert \phi_{2,m} - \tilde{\phi}_{2,m} \Vert_{\mathcal{H}^0}^2 
= \sum_{m\in\mathbb{Z}} \Vert \phi^1_{2,m} \Vert_{L^2}^2
< \infty .
\end{equation}
To prove \eqref{bari_ineq}, we first note that $\phi^1_{2,m} = f_{m,1} + f_{m,2}$ with 
\begin{align*}
& f_{m,1}(x) \!=\! \tfrac{\sinh(r_m (L-x))}{A_m r_m \sinh(r_m L)} \! \int_0^x \!\! \beta(s) \sinh(\lambda_{2,m} s) \sinh(r_m s) \,\mathrm{d}s , \\
& f_{m,2}(x) = \tfrac{1}{A_m r_m \sinh(r_m L)} \\
& \hspace{2cm} \times \int_x^L \beta(s) \sinh(\lambda_{2,m} s) \big( \sinh(r_m (L-x) ) \sinh(r_m s) - \sinh(r_m L) \sinh(r_m (s-x)) \big) \,\mathrm{d}s .
\end{align*}
We study the two terms separately. Since $r_m^2 = \lambda_{2,m} - c = \tfrac{1}{2L} \log\big(\tfrac{\alpha-1}{\alpha+1}\big) - c  + i \tfrac{m\pi}{L}$ with $\operatorname{Re}(r_m)\geq 0$, we infer that $\vert r_m \vert \sim \sqrt{\tfrac{\vert m \vert \pi}{L}}$ and $\operatorname{Re} r_m \sim \sqrt{\tfrac{\vert m \vert \pi}{2L}}$ as $\vert m \vert \rightarrow +\infty$. Recall that $\vert \sinh(\operatorname{Re} z) \vert \leq \vert \cosh(z) \vert , \vert \sinh(z) \vert \leq \vert \cosh(\operatorname{Re} z) \vert$ for any $z \in \mathbb{C}$, and $\cosh(x) \leq e^x$ and $\tfrac{1}{2}(e^x-1) \leq \sinh(x) \leq e^x /2$ for any $x \geq 0$. For $\vert m \vert$ large, we obtain
\begin{align*}
\vert f_{m,1}(x) \vert
& \leq \tfrac{\vert \sinh(r_m (L-x)) \vert}{A_m \vert r_m \vert \vert \sinh(r_m L) \vert}  \int_0^x \vert \beta(s) \vert \vert \sinh(\lambda_{2,m} s) \vert \vert \sinh(r_m s) \vert \,\mathrm{d}s \\
& \leq \tfrac{\Vert \beta \Vert_{L^\infty} \cosh(\operatorname{Re}r_m (L-x))}{A_m \vert r_m \vert \sinh(\operatorname{Re}r_m L)} \!\! \int_0^x \!\!\! \cosh(\operatorname{Re}\lambda_{2,m} s) \cosh(\operatorname{Re}r_m s) \,\mathrm{d}s \\
& \leq \tfrac{2\Vert \beta \Vert_{L^\infty} \cosh(\mu L) e^{\operatorname{Re}r_m (L-x)}}{A_m \vert r_m \vert (e^{\operatorname{Re}r_m L}-1)} \int_0^x \cosh(\operatorname{Re}r_m s) \,\mathrm{d}s \\
& \leq \tfrac{2\Vert \beta \Vert_{L^\infty} \cosh(\mu L) e^{\operatorname{Re}r_m (L-x)}}{A_m \vert r_m \vert (e^{\operatorname{Re}r_m L}-1)} \times \tfrac{\sinh(\operatorname{Re}r_m x)}{\operatorname{Re}r_m} \\
& \leq \tfrac{\Vert \beta \Vert_{L^\infty} \cosh(\mu L) e^{\operatorname{Re}r_m (L-x)}}{A_m \vert r_m \vert (e^{\operatorname{Re}r_m L}-1)} \ \tfrac{e^{\operatorname{Re}r_m x}}{\operatorname{Re}r_m} \\
& \leq \Vert \beta \Vert_{L^\infty} \cosh(\mu L) \ \tfrac{e^{\operatorname{Re}r_m L}}{e^{\operatorname{Re}r_m L}-1} \ \tfrac{1}{A_m \vert r_m \vert \operatorname{Re}r_m}
\end{align*}
hence $\Vert f_{m,1} \Vert_{L^\infty} = O(1/m^2)$ as $\vert m \vert \rightarrow +\infty$. For the second term, noting that $\sinh(r_m (L-x) ) \sinh(r_m s) - \sinh(r_m L) \sinh(r_m (s-x)) = \sinh(r_m x) \sinh(r_m(L-s))$, we have
\begin{align*}
\vert f_{m,2}(x) \vert 
& \leq \tfrac{\int_x^L \vert \beta(s) \vert \vert \sinh(\lambda_{2,m} s) \vert \vert \sinh(r_m x) \vert \vert \sinh(r_m(L-s)) \vert \,\mathrm{d}s }{A_m \vert r_m \vert \vert \sinh(r_m L) \vert} \\
& \leq \tfrac{\Vert \beta \Vert_{L^\infty} \int_x^L \cosh( \operatorname{Re}\lambda_{2,m} s) \cosh(\operatorname{Re}r_m x) \cosh(\operatorname{Re}r_m(L-s)) \,\mathrm{d}s }{A_m \vert r_m \vert \sinh(\operatorname{Re}r_m L)}  \\
& \leq \tfrac{2 \Vert \beta \Vert_{L^\infty} \cosh( \mu L) e^{\operatorname{Re}r_m x}}{A_m \vert r_m \vert ( e^{\operatorname{Re}r_m L}-1)} \int_x^L e^{\operatorname{Re}r_m(L-s)} \,\mathrm{d}s  \\
& \leq \tfrac{2 \Vert \beta \Vert_{L^\infty} \cosh( \mu L) e^{\operatorname{Re}r_m x}}{A_m \vert r_m \vert ( e^{\operatorname{Re}r_m L}-1)} \ \tfrac{e^{\operatorname{Re}r_m(L-x)}}{\operatorname{Re}r_m} \\  
& \leq 2 \Vert \beta \Vert_{L^\infty} \cosh( \mu L) \tfrac{e^{\operatorname{Re}r_m L}}{ e^{\operatorname{Re}r_m L}-1} \ \tfrac{1}{A_m \vert r_m \vert \operatorname{Re}r_m}
\end{align*}
hence $\Vert f_{m,2} \Vert_{L^\infty} = O(1/m^2)$ as $\vert m \vert \rightarrow +\infty$. 
\end{proof}


Since the controllability properties of the system are captured by the properties of the dual Riesz basis $\Psi$ of $\Phi$, we next compute the eigenvectors of the adjoint operator $\mathcal{A}^*$.

\begin{lemma}\label{lem: dual basis}
Identifying the Hilbert space $\mathcal{H}^0$ with its dual, the adjoint operator $\mathcal{A}^*$ is given by
\begin{subequations}\label{def_A*}
    \begin{equation}
        \mathcal{A}^* = \begin{pmatrix} \partial_{xx} + c\,\mathrm{id} & 0 & 0 \\ P_\beta & 0 & - \mathrm{id} \\ 0 & -\partial_{xx} & 0 \end{pmatrix}
    \end{equation}
    with $P_\beta f = \int_0^{(\cdot)} \int_\tau^L \beta(s) f(s) \,\mathrm{d}s\,\mathrm{d}\tau$, and domain
\begin{multline}
\!\!\! D(\mathcal{A^*}) = \{ (f,g,h)\in H^2(0,L) \times H^2(0,L) \times H^1(0,L) \,\mid\,  \\
 f(0) \!=\! f(L) \!=\! g(0) \!=\! h(0) \!=\! 0 , \ g'(L) \!-\! \alpha h(L) \!=\! 0 \} .
\end{multline}
\end{subequations}
Its eigenfunctions are given by the dual Riesz basis $\Psi = \{\psi_{1,n}\, \mid\, n\in\mathbb{N}^* \} \cup \{\psi_{2,m}\, \mid\, m \in\mathbb{Z} \}$ of $\Phi$, associated with the eigenvalues 
\begin{align*}
& \mu_{1,n} = \lambda_{1,n} = c - \tfrac{n^2 \pi^2}{L^2}, \qquad n\in\mathbb{N}^* , \\ 
& \mu_{2,m} = \bar\lambda_{2,m} = \tfrac{1}{2L} \log\big(\tfrac{\alpha-1}{\alpha+1}\big) - i \tfrac{m\pi}{L} , \qquad m \in\mathbb{Z} ,
\end{align*}
where, setting $\psi_{1,n}=(\psi^1_{1,n},\psi^2_{1,n},\psi^3_{1,n})$ and $\psi_{2,m}=(\psi^1_{2,m},\psi^2_{2,m},\psi^3_{2,m})$,
\begin{align*}
\psi^1_{1,n}(x) = & \sqrt{\tfrac{2}{L}} \sin\big(\tfrac{n\pi}{L}x\big) , \\ 
\psi^2_{1,n}(x) = & - \tfrac{\gamma_n}{\lambda_{1,n}^2 \big( \cosh(\lambda_{1,n} L) + \alpha \sinh(\lambda_{1,n} L) \big) } \sqrt{\tfrac{2}{L}} \big( \cosh(\lambda_{1,n} (x-L)) - \alpha \sinh(\lambda_{1,n} (x-L)) \big) \\
& - \tfrac{1}{\lambda_{1,n}^2} \sqrt{\tfrac{2}{L}} \int_x^L \beta(s) \sin\big(\tfrac{n\pi}{L}s\big) \sinh(\lambda_{1,n} (x-s)) \,\mathrm{d}s  \\
& + \tfrac{1}{\lambda_{1,n}} \sqrt{\tfrac{2}{L}} \int_0^x \int_\tau^L \beta(s) \sin\big(\tfrac{n\pi}{L}s\big) \,\mathrm{d}s\,\mathrm{d}\tau , \\ 
\psi^3_{1,n}(x) = & \tfrac{\gamma_n}{\lambda_{1,n} \big( \cosh(\lambda_{1,n} L) + \alpha \sinh(\lambda_{1,n} L) \big) } \sqrt{\tfrac{2}{L}} \big( \cosh(\lambda_{1,n} (x-L)) - \alpha \sinh(\lambda_{1,n} (x-L)) \big) \\
& + \tfrac{1}{\lambda_{1,n}} \sqrt{\tfrac{2}{L}} \int_x^L \beta(s) \sin\big(\tfrac{n\pi}{L}s\big) \sinh(\lambda_{1,n} (x-s)) \,\mathrm{d}s ,
\end{align*}
with
\begin{subequations}\label{def_gamma_n}
    \begin{equation}\label{eq: def gamma_1 - 1}
        \gamma_n = \int_0^L \beta(s) \sin\big(\tfrac{n\pi}{L}s\big) \sinh(\lambda_{1,n} s) \,\mathrm{d}s
    \end{equation}
    whenever $\tfrac{n^2 \pi^2}{L^2}\neq c$, i.e., when $\lambda_{1,n} \neq 0$, and 
    \begin{align*}
        \psi^1_{1,n}(x) = & \sqrt{\tfrac{2}{L}} 		\sin\big(\tfrac{n\pi}{L}x\big) , \quad
		\psi^2_{1,n}(x) =  \alpha\gamma_n \sqrt{\tfrac{2}{L}} x , \\ 
		\psi^3_{1,n}(x) = & \sqrt{\tfrac{2}{L}} \int_0^x \int_\tau^L \beta(s) \sin\big(\tfrac{n\pi}{L}s\big) \,\mathrm{d}s\,\mathrm{d}\tau ,
    \end{align*}
    with 
    \begin{equation}\label{eq: def gamma_1 - 2}
        \gamma_n = \int_0^L \int_\tau^L \beta(s) \sin\big(\tfrac{n\pi}{L}s\big) \,\mathrm{d}s\,\mathrm{d}\tau
    \end{equation}
\end{subequations}
whenever $\tfrac{n^2 \pi^2}{L^2} = c$, i.e., when $\lambda_{1,n} = 0$, and
\begin{align*}
& \psi^1_{2,m}(x) = 0 , \quad
\psi^2_{2,m}(x) = \tfrac{A_m}{L (\bar{\lambda}_{2,m})^2} \sinh(\bar{\lambda}_{2,m} x) , \\
& \psi^3_{2,m}(x) = - \tfrac{A_m}{L \bar{\lambda}_{2,m}} \sinh(\bar{\lambda}_{2,m} x) .
\end{align*}
The eigenvectors satisfy $\mathcal{A}^* \psi_{1,n} = \lambda_{1,n}\psi_{1,n}$ and $\mathcal{A}^* \psi_{2,m} = \bar{\lambda}_{2,m}\psi_{2,m}$ and have been normalized so that $\langle \phi_{1,n} , \psi_{1,n} \rangle = 1$ and $\langle \phi_{2,m} , \psi_{2,m} \rangle = 1$ for all $n\in\mathbb{N}^*$ and $m\in\mathbb{Z}$.
\end{lemma}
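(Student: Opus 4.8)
The plan is to proceed in three steps: identify $\mathcal{A}^*$ by integration by parts, solve the eigenvalue problem for $\mathcal{A}^*$ by the same ODE analysis as in the proof of Lemma~\ref{lem: eigenstructures of A}, and finally invoke the Riesz basis property of $\Phi$ (Lemma~\ref{lem: A riesz spectral}) to conclude that the normalized eigenvectors of $\mathcal{A}^*$ constitute the dual Riesz basis $\Psi$. For the first step, note that $\mathcal{A}$ is densely defined and closed (Lemma~\ref{lem: A riesz spectral}), so $\mathcal{A}^*$ is well defined; I would take $(f,g,h)\in D(\mathcal{A})$ and a smooth triple $(u,v,w)$, expand $\langle\mathcal{A}(f,g,h),(u,v,w)\rangle_{\mathcal{H}^0}$ and integrate by parts in each term, bearing in mind that the middle slot of $\mathcal{H}^0$ carries the $H^1$ inner product $\int_0^L(\cdot)'\overline{(\cdot)'}$. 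The only non-mechanical manipulation is the term $\int_0^L\beta g\,\overline{u}$: since $g(0)=0$ one writes $g(x)=\int_0^x g'$ and, by Fubini, this equals $\int_0^L g'(\tau)\,\overline{(P_\beta u)'(\tau)}\,\mathrm{d}\tau$, i.e.\ the $H^1$-pairing of $g$ with $P_\beta u$, which is exactly what yields the $(2,1)$ entry $P_\beta$ of $\mathcal{A}^*$. Collecting the boundary terms and using $g'(L)=-\alpha h(L)$ from $D(\mathcal{A})$, they reduce to $f'(L)\overline{u(L)}-f'(0)\overline{u(0)}-g'(0)\overline{w(0)}+h(L)\,\overline{(v'(L)-\alpha w(L))}$; requiring this to vanish for every $(f,g,h)\in D(\mathcal{A})$ forces exactly $u(0)=u(L)=w(0)=0$ and $v'(L)-\alpha w(L)=0$. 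This proves $\mathcal{B}\subseteq\mathcal{A}^*$, where $\mathcal{B}$ denotes the operator of the statement; the converse inclusion will follow from the last step.

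Next, I would solve $\mathcal{A}^*(u,v,w)=\mu(u,v,w)$, that is $u''+cu=\mu u$, $P_\beta u-w=\mu v$, $-v''=\mu w$, with the boundary conditions just obtained. The first equation together with $u(0)=u(L)=0$ leaves two cases. If $u\equiv0$, then $w=-\mu v$ and $v''=\mu^2 v$ with $v(0)=0$ and $v'(L)+\alpha\mu v(L)=0$; this is the characteristic equation $e^{2\mu L}=\frac{\alpha-1}{\alpha+1}$ already met in Lemma~\ref{lem: eigenstructures of A}, so $\mu=\mu_{2,m}=\bar\lambda_{2,m}$ (after relabeling $m\mapsto-m$), $v\propto\sinh(\mu x)$, $w=-\mu v$, and fixing the scalar by $\langle\phi_{2,m},\psi_{2,m}\rangle=1$ gives the stated $\psi_{2,m}$. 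If instead $u=\sqrt{2/L}\sin(n\pi x/L)$ and $\mu=\lambda_{1,n}$, then for $\lambda_{1,n}\neq0$ one substitutes $w=P_\beta u-\mu v$ into $-v''=\mu w$, obtaining $v''-\mu^2 v=-\mu P_\beta u$, solves it by variation of parameters with homogeneous basis $e^{\pm\mu x}$, and imposes $v(0)=0$ and the Robin-type condition at $x=L$; this yields $\psi^2_{1,n}$ (with $\gamma_n$ as in \eqref{eq: def gamma_1 - 1}) and then $\psi^3_{1,n}=P_\beta u-\mu\psi^2_{1,n}$; when $\lambda_{1,n}=0$ the ODE degenerates to $v''=0$, $w=P_\beta u$, and the boundary condition pins down the slope of the affine $v$, producing the alternative formulas with $\gamma_n$ as in \eqref{eq: def gamma_1 - 2}.

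Finally, $\mathcal{B}\subseteq\mathcal{A}^*$ shows that each $\psi_{j,k}$ is an eigenvector of $\mathcal{A}^*$ with eigenvalue $\overline{\lambda_{j,k}}$. Under \eqref{eq: condition for selecting alpha} all eigenvalues of $\mathcal{A}$ are simple and pairwise distinct (the $\lambda_{1,n}$ are real and strictly decreasing; the $\lambda_{2,m}$ have pairwise distinct nonzero imaginary parts for $m\neq0$; and $\lambda_{2,0}$ differs from every $\lambda_{1,n}$), so combining $\langle\mathcal{A}\phi_l,\psi_k\rangle=\lambda_l\langle\phi_l,\psi_k\rangle$ and $\langle\phi_l,\mathcal{A}^*\psi_k\rangle=\lambda_k\langle\phi_l,\psi_k\rangle$ yields $\langle\phi_l,\psi_k\rangle=0$ for $l\neq k$; a short direct computation (using $\phi^2_{1,n}=\phi^3_{1,n}=0$, resp.\ $\psi^1_{2,m}=0$ and $\cosh^2-\sinh^2\equiv1$) gives $\langle\phi_{1,n},\psi_{1,n}\rangle=\langle\phi_{2,m},\psi_{2,m}\rangle=1$. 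Hence $\Psi$ is biorthogonal to the Riesz basis $\Phi$, so it is its unique dual Riesz basis (and a Riesz basis); in particular $\mathcal{B}$ is Riesz-spectral with eigenbasis $\Psi$ and the same spectral decomposition as $\mathcal{A}^*$, which forces $\mathcal{B}=\mathcal{A}^*$. I expect the main effort to be the ODE bookkeeping in the second step — the variation-of-parameters formula for $\psi^2_{1,n}$ and the degenerate case $\lambda_{1,n}=0$ — together with the careful collection of boundary terms in the first step; the rest is structural.
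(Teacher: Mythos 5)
Your proposal is correct and follows essentially the same route as the paper's proof: identify $\mathcal{A}^*$ and its domain by integration by parts (your Fubini manipulation of $\int_0^L\beta g\,\overline{u}$ into the $H^1$-pairing with $P_\beta u$, and your collection of boundary terms, are exactly what the paper's ``it can be checked'' covers), then solve the adjoint eigenvalue problem by the same case analysis ($u\equiv 0$ versus $u\neq 0$, and $\lambda_{1,n}=0$ versus $\lambda_{1,n}\neq 0$) as in Lemma~\ref{lem: eigenstructures of A}, and finally use simplicity of the eigenvalues, biorthogonality and the normalization $\langle\phi_{1,n},\psi_{1,n}\rangle=\langle\phi_{2,m},\psi_{2,m}\rangle=1$ to identify $\Psi$ as the dual Riesz basis of $\Phi$. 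The one step you pass over quickly---upgrading $\mathcal{B}\subseteq\mathcal{A}^*$ to equality, which strictly requires closedness of the concrete operator $\mathcal{B}$ or surjectivity of $\bar\lambda-\mathcal{B}$ for some $\bar\lambda$ in the resolvent set---is treated no more explicitly in the paper's own proof, so your argument is at least as complete as the original.
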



\begin{proof}
Using \eqref{def_A*}, it can be checked that $\langle \mathcal{A}(f_1,g_1,h_1) , (f_2,g_2,h_2) \rangle = \langle (f_1,g_1,h_1) , \mathcal{A}^* (f_2,g_2,h_2) \rangle$ for all $(f_1,g_1,h_1) \in D(\mathcal{A})$ and all $(f_2,g_2,h_2) \in D(\mathcal{A}^*)$. Let us now solve $\mathcal{A}^*(f,g,h) = \lambda (f,g,h)$ for some $\lambda\in\mathbb{C}$ and $(f,g,h)\in D(\mathcal{A}^*)$. This gives 
$f'' + c f  = \lambda f$, $- g''  = \lambda h$ and 
$- \int_0^{x} \int_L^\tau \beta(s) f(s) \,\mathrm{d}s\,\mathrm{d}\tau - h  = \lambda g $.
Proceeding similarly to the proof of Lemma~\ref{lem: eigenstructures of A}, the case $f = 0$ gives $\lambda = \lambda_{2,-m}$ for some $m\in\mathbb{Z}$ with $(f,g,h) = \psi_{2,m}$ while the case $f \neq 0$ gives $\lambda = \lambda_{1,n}$ for some $n\in\mathbb{N}^*$. Studying separatly the cases $\lambda_{1,n} = 0$ and $\lambda_{1,n} \neq 0$, we infer the claimed conclusion. Note that in the case $\lambda_{1,n} \neq 0$ the introduced eigenfunctions are well-defined because it follows from \eqref{eq: condition for selecting alpha} that $\cosh(\lambda_{1,n} L) + \alpha \sinh(\lambda_{1,n} L) \neq 0$.
\end{proof}


Lemma~\ref{lem: A riesz spectral} shows that $\mathcal{A}$ is a Riesz operator. This will allow us in the next sections to perform stabilization in $\mathcal{H}^0$ norm. But actually, the stabilization of the parabolic part of the system can also be achieved in $H^1$ norm (compared to the $L^2$ norm in the case of the space $\mathcal{H}^0$) thanks to the following result.


\begin{lemma}\label{lemma: Riesz basis H1}
	$\Phi^1 = \{\tfrac{L}{n\pi}\phi_{1,n}\, \mid\, n\in\mathbb{N}^* \} \cup \{\phi_{2,m}\, \mid\, m \in\mathbb{Z} \}$ is a Riesz basis of the Hilbert space
	\begin{subequations}\label{eq: space H1}
	\begin{equation}
		\mathcal{H}^1 = H_0^1(0,L) \times H_{(0)}^1(0,L) \times L^2(0,L)
	\end{equation}
	endowed with the inner product 
	\begin{equation}
        \langle (f_1,g_1,h_1) , (f_2,g_2,h_2) \rangle = \int_0^L (f_1'\overline{f_2'} + g'_1 \overline{g'_2} + h_1\overline{h_2} ) .
    \end{equation}
\end{subequations}
\end{lemma}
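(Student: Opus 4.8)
The plan is to mirror the proof of Lemma~\ref{lem: A riesz spectral}, replacing the energy space $\mathcal{H}^0$ by $\mathcal{H}^1$ and exhibiting a reference Riesz basis of $\mathcal{H}^1$ to which $\Phi^1$ is quadratically close, so that Bari's theorem applies again.

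First I would build the reference basis. Since $\big(\tfrac{L}{n\pi}\phi^1_{1,n}\big)'(x) = \sqrt{\tfrac{2}{L}}\cos\big(\tfrac{n\pi}{L}x\big)$, the differentiation map $f\mapsto f'$ sends $\{\tfrac{L}{n\pi}\phi^1_{1,n}\,\mid\,n\in\mathbb{N}^*\}$ onto the standard orthonormal cosine family of $L^2(0,L)$; since moreover $\int_0^L f' = f(L)-f(0) = 0$ for every $f\in H_0^1(0,L)$, this shows that $\{\tfrac{L}{n\pi}\phi^1_{1,n}\,\mid\,n\in\mathbb{N}^*\}$ is a Hilbert basis of $H_0^1(0,L)$ endowed with the inner product $\langle f_1,f_2\rangle = \int_0^L f_1'\overline{f_2'}$. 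Combining this with the fact, already used in the proof of Lemma~\ref{lem: A riesz spectral}, that $\{(\phi^2_{2,m},\phi^3_{2,m})\,\mid\,m\in\mathbb{Z}\}$ is a Riesz basis of $H_{(0)}^1(0,L)\times L^2(0,L)$, and writing $\mathcal{H}^1 = H_0^1(0,L)\times\big(H_{(0)}^1(0,L)\times L^2(0,L)\big)$, we obtain that $\tilde{\Phi}^1 := \{\tfrac{L}{n\pi}\phi_{1,n}\,\mid\,n\in\mathbb{N}^*\}\cup\{(0,\phi^2_{2,m},\phi^3_{2,m})\,\mid\,m\in\mathbb{Z}\}$ is a Riesz basis of $\mathcal{H}^1$.

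Next I would apply Bari's theorem exactly as in Lemma~\ref{lem: A riesz spectral}. All vectors of $\Phi^1$ belong to $\mathcal{H}^1$: indeed $\tfrac{L}{n\pi}\phi^1_{1,n}\in H_0^1(0,L)$, and $\phi^1_{2,m}\in H^2(0,L)$ with $\phi^1_{2,m}(0) = \phi^1_{2,m}(L) = 0$ by the construction in Lemma~\ref{lem: eigenstructures of A}, hence $\phi^1_{2,m}\in H_0^1(0,L)$. Since $\mathcal{H}^1\hookrightarrow\mathcal{H}^0$ continuously (Poincaré on the first component) and $\Phi$ is $\omega$-linearly independent in $\mathcal{H}^0$, the family $\Phi^1$ — obtained from $\Phi$ by rescaling each $\phi_{1,n}$ by the nonzero factor $\tfrac{L}{n\pi}$ — is $\omega$-linearly independent in $\mathcal{H}^1$. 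It then only remains to check quadratic closeness of $\Phi^1$ and $\tilde{\Phi}^1$: the indices $(1,n)$ contribute $0$, while for each $m$ one has $\phi_{2,m}-(0,\phi^2_{2,m},\phi^3_{2,m}) = (\phi^1_{2,m},0,0)$, so the condition reduces to $\sum_{m\in\mathbb{Z}}\|(\phi^1_{2,m})'\|_{L^2}^2<\infty$.

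The \emph{only nontrivial point}, and the one I expect to be the main obstacle, is this last summability estimate; differentiating the explicit kernel defining $\phi^1_{2,m}$ is possible but cumbersome, and I would instead use the eigen-relation. The first line of $\mathcal{A}\phi_{2,m} = \lambda_{2,m}\phi_{2,m}$ reads $(\phi^1_{2,m})'' = (\lambda_{2,m}-c)\phi^1_{2,m} - \beta\phi^2_{2,m} = r_m^2\,\phi^1_{2,m} - \beta\,\phi^2_{2,m}$. From the proof of Lemma~\ref{lem: A riesz spectral} we have $\|\phi^1_{2,m}\|_{L^\infty} = O(1/m^2)$, hence $\|\phi^1_{2,m}\|_{L^2} = O(1/m^2)$; moreover $|r_m|^2\sim\tfrac{|m|\pi}{L}$, $A_m\sim C|m|$ for some $C>0$, and $|\sinh(\lambda_{2,m}x)|\leq\cosh(\mu L)$ on $[0,L]$, so that $\|\phi^2_{2,m}\|_{L^2} = O(1/|m|)$. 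Consequently $\|(\phi^1_{2,m})''\|_{L^2}\leq|r_m|^2\,\|\phi^1_{2,m}\|_{L^2} + \|\beta\|_{L^\infty}\,\|\phi^2_{2,m}\|_{L^2} = O(1/|m|)$. Since $\phi^1_{2,m}\in H_0^1(0,L)\cap H^2(0,L)$, integration by parts together with Cauchy--Schwarz yields $\|(\phi^1_{2,m})'\|_{L^2}^2\leq\|\phi^1_{2,m}\|_{L^2}\,\|(\phi^1_{2,m})''\|_{L^2} = O(1/|m|^3)$, so the series converges and Bari's theorem gives that $\Phi^1$ is a Riesz basis of $\mathcal{H}^1$. (Alternatively, one may differentiate the functions $f_{m,1}$ and $f_{m,2}$ from the proof of Lemma~\ref{lem: A riesz spectral} directly, each derivative costing a factor $|r_m| = O(\sqrt{|m|})$, which gives $\|(\phi^1_{2,m})'\|_{L^\infty} = O(|m|^{-3/2})$, again summable after squaring; the eigen-equation route is shorter.)
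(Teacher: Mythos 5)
Your proof is correct, and it reaches the same Bari-type reduction as the paper — namely that everything boils down to the summability $\sum_{m\in\mathbb{Z}}\Vert(\phi^1_{2,m})'\Vert_{L^2}^2<\infty$ — but by a somewhat different route at two points. First, for the reference Riesz basis of $\mathcal{H}^1$ you build $\tilde\Phi^1$ directly as a product basis (cosine-derivative argument on $H_0^1$, plus the wave Riesz basis padded with a zero first component), whereas the paper transports the basis $\tilde\Phi$ of $\mathcal{H}^0$ through the isometry $J=(\sqrt{-\triangle},\mathrm{id},\mathrm{id}):\mathcal{H}^1\to\mathcal{H}^0$, which yields $J^{-1}\phi_{1,n}=\tfrac{L}{n\pi}\phi_{1,n}$ and $J^{-1}\phi_{2,m}=((\sqrt{-\triangle})^{-1}\phi^1_{2,m},\phi^2_{2,m},\phi^3_{2,m})$ and then uses $\Vert(\sqrt{-\triangle})^{-1}\phi^1_{2,m}\Vert_{H_0^1}=\Vert\phi^1_{2,m}\Vert_{L^2}$ to arrive at the same condition; the two constructions are essentially equivalent, and you add the (correct, if routine) verification of $\omega$-linear independence via the embedding $\mathcal{H}^1\hookrightarrow\mathcal{H}^0$, which the paper leaves implicit. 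Second, and more substantively, the paper obtains the key decay by differentiating the explicit representations $f_{m,1},f_{m,2}$ from Lemma~\ref{lem: A riesz spectral} and asserting $\Vert f_{m,1}'\Vert_{L^\infty},\Vert f_{m,2}'\Vert_{L^\infty}=O(1/\vert m\vert^{3/2})$ (a kernel computation it does not detail), while you bypass this by using the eigen-relation $(\phi^1_{2,m})''=r_m^2\phi^1_{2,m}-\beta\phi^2_{2,m}$ together with the already established bounds $\Vert\phi^1_{2,m}\Vert_{L^\infty}=O(1/m^2)$, $\Vert\phi^2_{2,m}\Vert_{L^2}=O(1/\vert m\vert)$, $\vert r_m\vert^2\sim\vert m\vert\pi/L$, and the interpolation inequality $\Vert f'\Vert_{L^2}^2\leq\Vert f\Vert_{L^2}\Vert f''\Vert_{L^2}$ (valid since $\phi^1_{2,m}(0)=\phi^1_{2,m}(L)=0$), yielding $\Vert(\phi^1_{2,m})'\Vert_{L^2}^2=O(1/\vert m\vert^3)$ — the same rate as the paper, obtained more cheaply and with all intermediate estimates already available from the $\mathcal{H}^0$ lemma. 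Both approaches are sound; yours buys a shorter, computation-free derivation of the crucial estimate, while the paper's kernel differentiation has the side benefit of producing the pointwise bound $\Vert(\phi^1_{2,m})'\Vert_{L^\infty}=O(1/\vert m\vert^{3/2})$, which is reused later in the proof of Theorem~\ref{thm4} for the Neumann pointwise measurement, so your $L^2$ bound alone would not fully substitute for it elsewhere in the paper.
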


\begin{proof}
Denoting by $\triangle:H^1_0(0,L) \cap H^2(0,L)\rightarrow L^2(0,L)$ the usual Dirichlet operator, the operator
$J  :  \mathcal{H}^1 \rightarrow \mathcal{H}^0$ defined by $J(f,g,h)=(\sqrt{-\triangle}f , g ,h)$ is easily seen to be a surjective isometry. Indeed, by integration by parts,
\begin{align*}
& \langle J(f_1,g_1,h_1),J(f_2,g_2,h_2)\rangle
= \langle(-\triangle f_1,g_1,h_1),(f_2,g_2,h_2)\rangle \\
& \qquad = \int_0^L f_1' f_2' + g_1' g_2' + h_1 h_2 \,\mathrm{d}x 
= \langle(f_1,g_1,h_1),(f_2,g_2,h_2)\rangle .
\end{align*} 
Hence $\{J^{-1}\phi_{1,n}\, \mid\, n\in\mathbb{N}^* \} \cup \{J^{-1}\phi_{2,m}\, \mid\, m \in\mathbb{Z} \}$ is a Riesz basis of $\mathcal{H}^1$, and we have $J^{-1}\phi_{1,n} = \tfrac{L}{n\pi} \phi_{1,n}$ and $J^{-1}\phi_{2,m} = ( (\sqrt{-\triangle})^{-1} \phi_{2,m}^1 , \phi_{2,m}^2 , \phi_{2,m}^3 )$. To use the Bari theorem, we have to show that
$
    \sum_{m\in\mathbb{Z}} \Vert \phi^1_{2,m} - (\sqrt{-\triangle})^{-1}\phi^1_{2,m} \Vert_{H_0^1}^2
    < +\infty .
$
Since $\Vert (\sqrt{-\triangle})^{-1}\phi^1_{2,m} \Vert_{H_0^1} = \Vert \phi^1_{2,m} \Vert_{L^2}$, it is sufficient to show that $\sum_{m\in\mathbb{Z}} \Vert \phi^1_{2,m} \Vert_{H_0^1}^2 < +\infty$. Following the proof of Lemma~\ref{lem: A riesz spectral}, we have $\phi_{2,m}^1 = f_{m,1} + f_{m_2}$ with $\Vert f_{m,1}' \Vert_{L^\infty} = O(1/\vert m \vert^{3/2})$ and $\Vert f_{m,2}' \Vert_{L^\infty} = O(1/\vert m \vert^{3/2})$. 
\end{proof}

\section{Spectral reduction}\label{sec: spectral reduction}

\subsection{Homogeneous representation and spectral reduction}

\label{sub sec: Finite-dimensional reduced model}
\noindent 
In order to work with a homogeneous representation of \eqref{eq: cascade equation - premilinary feedback}, we lift the control from the boundary into the domain (see e.g. \cite[Sec.~3.3]{curtain2012introduction}) by making the change of variables:
\begin{subequations}\label{eq: change of variable}
\begin{align}
& w^1(t,x) = y(t,x) , \quad w^2(t,x) = z(t,x) , \\
& w^3(t,x) = \partial_t z(t,x) - \tfrac{x}{\alpha L} v(t) .
\end{align}
\end{subequations}
Assuming that $v$ is of class $\mathcal{C}^1$ (this assumption will be fulfilled by the upcoming adopted feedback control strategy), the control system becomes
\begin{subequations}\label{eq: cascade equation - homonegenous}
\begin{align}
& \partial_t w^1 = \partial_{xx} w^1 + c w^1 + \beta w^2 , \\
& \partial_t w^2 = w^3 + \tfrac{x}{\alpha L} v , \\
& \partial_t w^3 = \partial_{xx} w^2 - \tfrac{x}{\alpha L} \dot{v} , \\
& w^1(t,0) = w^1(t,L) = 0 , \\ 
& w^2(t,0) = 0 , \qquad \partial_x w^2(t,L) + \alpha w^3(t,L) = 0 ,\\
& w^1(0,x) = y_0(x) , \qquad w^2(0,x) = z_0(x) , \\ 
& w^3(0,x) = z_1(x) - \tfrac{x}{\alpha L} v(0) .
\end{align}
\end{subequations}
Setting $W(t)=\big(w^1(t,\cdot),w^2(t,\cdot),w^3(t,\cdot)\big)$, $a(x) = (0,\tfrac{x}{\alpha L},0) \in\mathcal{H}^0$ and $b(x) = (0,0,-\tfrac{x}{\alpha L}) \in\mathcal{H}^0$, one has
\begin{subequations}\label{eq: cascade equation - homonegenous abstract}
\begin{align}
\dot{W}(t) & = \mathcal{A} W(t) + a v(t) + b \dot v(t) ,\\
W(0,x) & = \big( y_0(x) , z_0(x) , z_1(x) - \tfrac{x}{\alpha L} v(0) \big) .
\end{align}
\end{subequations}
We now use the Riesz basis $\Phi$ and $\Psi$ defined in Section~\ref{sec_prelim_A}, expanding
\begin{equation}\label{eq: projection system trajectory into Riesz basis}
W(t,\cdot) = \sum_{n\in\mathbb{N}^*} w_{1,n}(t) \phi_{1,n} + \sum_{m \in\mathbb{Z}} w_{2,m}(t) \phi_{2,m} 
\end{equation}
with $w_{1,n} = \langle W(t,\cdot) , \psi_{1,n} \rangle$, $w_{2,m} = \langle W(t,\cdot) , \psi_{2,m} \rangle$, $a_{1,n} = \langle a , \psi_{1,n} \rangle$, $a_{2,m} = \langle a , \psi_{2,m} \rangle$, $b_{1,n} = \langle b , \psi_{1,n} \rangle$ and $b_{2,m} = \langle b , \psi_{2,m} \rangle$ for all $n\in\mathbb{N}^*$ and $m \in\mathbb{Z}$. 
Defining the new control $v_d = \dot{v}$ (auxiliary input for control design), we thus have
\begin{subequations}\label{eq: spectral reduction}
\begin{align}
\dot{w}_{1,n} & = \lambda_{1,n} w_{1,n} + a_{1,n} v + b_{1,n} v_d , \quad n\in\mathbb{N}^* , \label{eq: spectral reductiona} \\
\dot{w}_{2,m} & = \lambda_{2,m} w_{2,m} + a_{2,m} v + b_{2,m} v_d , \quad m \in\mathbb{Z}, \label{eq: spectral reductionb}
\end{align}
\end{subequations}
\begin{equation}\label{eq: inegral action}
\dot{v} = v_d . \qquad\qquad\qquad\qquad\qquad\qquad\qquad\quad\ 
\end{equation}

\subsection{Finite-dimensional model and controllability properties}\label{subsec: finite-dim model}

\subsubsection{Finite-dimensional model}

For a given $\delta >0$, let us choose an integer $N_0 \in\mathbb{N}^*$, large enough so that
$$
\lambda_{1,n} \leq \lambda_{1,N_0+1} = c - \tfrac{(N_0+1)^2\pi^2}{L^2} < -\delta < 0 \quad\forall n \geq N_0 +1.
$$
We next consider the finite-dimensional system consisting of the $N_0$ first modes of the plant \eqref{eq: spectral reductiona} associated with the eigenvalues $\lambda_{1,n}$ of the reaction-diffusion equation. Setting
\begin{align*}
W_0 & = \begin{pmatrix}
w_{1,1} & w_{1,2} & \ldots & w_{1,N_0}
\end{pmatrix}^\top \in\mathbb{R}^{N_0} , \\
A_0 & = \mathrm{diag}\big(
\lambda_{1,1} , \lambda_{1,2} , \ldots , \lambda_{1,N_0}
\big) \in\mathbb{R}^{N_0 \times N_0} , \\
B_{a,0} & = \begin{pmatrix}
a_{1,1} & a_{1,2} & \ldots & a_{1,N_0}
\end{pmatrix}^\top \in\mathbb{R}^{N_0} , \\
B_{b,0} & = \begin{pmatrix}
b_{1,1} & b_{1,2} & \ldots & b_{1,N_0}
\end{pmatrix}^\top \in\mathbb{R}^{N_0} ,
\end{align*}
we have 
\begin{equation}\label{truncature:eq}
\dot{W}_0(t) = A_0 W_0(t) + B_{a,0} v(t) + B_{b,0} v_d(t) .
\end{equation}
Note in particular that these dynamics do not capture any mode $\lambda_{2,m}$ of the hyperbolic part of the plant as described by \eqref{eq: spectral reductionb}. Augmenting the state vector and the matrices by setting
\begin{equation*}
W_1 = \begin{pmatrix} v \\ W_0 \end{pmatrix} , \quad
A_1 = \begin{pmatrix} 0 & 0 \\ B_{a,0} & A_0 \end{pmatrix} , \quad
B_1 = \begin{pmatrix} 1 \\ B_{b,0} \end{pmatrix} ,
\end{equation*}
we deduce that
\begin{equation}\label{truncature:eq:aug}
\dot{W}_1(t) = A_1 W_1(t) + B_1 v_d(t) .
\end{equation}

\subsubsection{Controllability}

Recalling that $v_d$ is viewed as an auxiliary control input, the following result characterizes the controllability property of \eqref{truncature:eq:aug}.


\begin{lemma}\label{kalman:contro:lemma}
The pair $(A_1,B_1)$ satisfies the Kalman condition if and only if $\gamma_n \neq 0$ for any $n\in\{1,\ldots,N_0\}$, where $\gamma_n$ is defined by \eqref{eq: def gamma_1 - 1} if $\tfrac{n^2 \pi^2}{L^2}\neq c$ and by \eqref{eq: def gamma_1 - 2} if $\tfrac{n^2 \pi^2}{L^2} = c$.
\end{lemma}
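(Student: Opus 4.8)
The plan is to apply the Popov-Belevitch-Hautus (PBH) test to the pair $(A_1,B_1)$, exploiting the block-triangular structure of $A_1$ and the diagonal structure of $A_0$. Recall that $A_1$ has eigenvalues $0$ (from the top-left block, coming from the integrator $\dot v = v_d$) together with $\lambda_{1,1},\ldots,\lambda_{1,N_0}$. I would first observe that, since $A_0$ is diagonal with entries $\lambda_{1,n}$ and these entries together with $0$ need not all be distinct, a direct Kalman-matrix computation is cumbersome; the PBH approach handles the possible coincidences cleanly. So the strategy is: the pair $(A_1,B_1)$ is controllable if and only if $\operatorname{rank}\begin{pmatrix} A_1 - \lambda I & B_1\end{pmatrix} = N_0+1$ for every $\lambda$ in the spectrum of $A_1$.

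The key computation is to write out this augmented matrix explicitly. Using the block form, $\begin{pmatrix} A_1 - \lambda I & B_1\end{pmatrix} = \begin{pmatrix} -\lambda & 0 & 1 \\ B_{a,0} & A_0 - \lambda I & B_{b,0}\end{pmatrix}$, where $A_0 - \lambda I = \operatorname{diag}(\lambda_{1,1}-\lambda,\ldots,\lambda_{1,N_0}-\lambda)$. I would then analyze this row by row. The $n$-th row of the bottom block reads $(a_{1,n},\ \ldots,\ \lambda_{1,n}-\lambda,\ \ldots,\ b_{1,n})$; if $\lambda \ne \lambda_{1,n}$ this row is automatically independent of the others because of the nonzero diagonal entry in a column where all other rows vanish. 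The only rows that can cause a rank drop are the top row (when $\lambda = 0$) and those bottom rows with $\lambda_{1,n} = \lambda$. One then checks case by case: for $\lambda$ not in the spectrum there is nothing to prove; for $\lambda = \lambda_{1,n} \ne 0$ equal to exactly one eigenvalue, a rank drop forces $a_{1,n} = b_{1,n} = 0$; for $\lambda = 0$ (whether or not it coincides with some $\lambda_{1,n}$), one uses the $1$ in the last column of the top row to keep that row, and the analysis reduces to the bottom block again. In every scenario the rank is full if and only if, for each $n \in \{1,\ldots,N_0\}$, the pair $(a_{1,n},b_{1,n})$ is not the zero vector — equivalently $\lvert a_{1,n}\rvert + \lvert b_{1,n}\rvert \neq 0$.

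The final step is to translate the condition $(a_{1,n},b_{1,n}) \neq (0,0)$ into the stated condition $\gamma_n \neq 0$. Here I would use $a_{1,n} = \langle a,\psi_{1,n}\rangle$ and $b_{1,n} = \langle b,\psi_{1,n}\rangle$ with $a = (0,\tfrac{x}{\alpha L},0)$, $b = (0,0,-\tfrac{x}{\alpha L})$, and the explicit formulas for $\psi_{1,n} = (\psi^1_{1,n},\psi^2_{1,n},\psi^3_{1,n})$ from Lemma~\ref{lem: dual basis}. Since the inner product of $\mathcal{H}^0$ pairs the second component through $g_1'\overline{g_2'}$ and the third through $h_1\overline{h_2}$, one gets $a_{1,n} = \tfrac{1}{\alpha L}\int_0^L \overline{(\psi^2_{1,n})'}\,\mathrm{d}x$ and $b_{1,n} = -\tfrac{1}{\alpha L}\int_0^L x\,\overline{\psi^3_{1,n}}\,\mathrm{d}x$, and a direct substitution of the closed-form expressions for $\psi^2_{1,n}$ and $\psi^3_{1,n}$ shows that both are scalar multiples of $\gamma_n$ (treating the two cases $\lambda_{1,n}\neq 0$ and $\lambda_{1,n}=0$ separately, matching the two definitions of $\gamma_n$ in \eqref{def_gamma_n}), with nonzero proportionality constants. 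Hence $(a_{1,n},b_{1,n}) \neq (0,0) \iff \gamma_n \neq 0$, which completes the proof.

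The main obstacle is the last step: carrying out the two integrals $\int_0^L (\psi^2_{1,n})'\,\mathrm{d}x$ and $\int_0^L x\,\psi^3_{1,n}\,\mathrm{d}x$ from the somewhat lengthy explicit formulas and simplifying them — using $\int_0^L(\psi^2_{1,n})' = \psi^2_{1,n}(L) - \psi^2_{1,n}(0) = \psi^2_{1,n}(L)$ to shorten one of them, and integrating by parts plus the double-integral identity $\int_0^L\!\int_\tau^L(\cdot) = \int_0^L s(\cdot)\,\mathrm{d}s$ for the other — to see the common factor $\gamma_n$ emerge and verify the proportionality constant is nonzero (using \eqref{eq: condition for selecting alpha} to guarantee $\cosh(\lambda_{1,n}L)+\alpha\sinh(\lambda_{1,n}L)\neq 0$). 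Everything else is the routine PBH bookkeeping sketched above.
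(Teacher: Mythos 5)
Your PBH bookkeeping contains a genuine error, and it propagates into the final translation step. At $\lambda=\lambda_{1,n}\neq 0$ the rank of $\begin{pmatrix} A_1-\lambda I & B_1\end{pmatrix}$ does \emph{not} drop only when $a_{1,n}=b_{1,n}=0$: a nontrivial left null vector may mix the integrator component with the $n$-th mode. Indeed, writing $c_0(\text{top row})+c_n(\text{row }n)=0$ gives, from the first and last columns, $-\lambda c_0+c_n a_{1,n}=0$ and $c_0+c_n b_{1,n}=0$, which admit a solution with $c_n\neq 0$ exactly when $a_{1,n}+\lambda_{1,n}b_{1,n}=0$ (and similarly the case $\lambda_{1,n}=0$ yields $a_{1,n}=0$). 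So the correct controllability criterion is $a_{1,n}+\lambda_{1,n}b_{1,n}\neq 0$ for all $n\in\{1,\dots,N_0\}$ — this is the quantity $\beta_{1,n}$ of Remark~\ref{rem_modes} — and not $(a_{1,n},b_{1,n})\neq(0,0)$, which is a strictly weaker condition.

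Your last step is also incorrect as stated: $a_{1,n}$ and $b_{1,n}$ are \emph{not} individually scalar multiples of $\gamma_n$. For instance, $a_{1,n}=\tfrac{1}{\alpha L}\overline{\psi^2_{1,n}(L)}$ and, from the explicit formula in Lemma~\ref{lem: dual basis} (case $\lambda_{1,n}\neq0$), $\psi^2_{1,n}(L)=\sqrt{\tfrac{2}{L}}\big(-\tfrac{\gamma_n}{\lambda_{1,n}^2(\cosh(\lambda_{1,n}L)+\alpha\sinh(\lambda_{1,n}L))}+\tfrac{1}{\lambda_{1,n}}\int_0^L s\,\beta(s)\sin(\tfrac{n\pi}{L}s)\,\mathrm{d}s\big)$, whose second term is independent of $\gamma_n$; so the equivalence $(a_{1,n},b_{1,n})\neq(0,0)\iff\gamma_n\neq0$ you assert does not hold. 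What \emph{is} true, and what rescues the argument, is that the correct combination satisfies $a_{1,n}+\lambda_{1,n}b_{1,n}=\tfrac{1}{\alpha}\overline{(\psi^2_{1,n})'(L)}=\overline{\psi^3_{1,n}(L)}$ (integrate by parts and use $-(\psi^2_{1,n})''=\lambda_{1,n}\psi^3_{1,n}$ together with the boundary condition $(\psi^2_{1,n})'(L)=\alpha\psi^3_{1,n}(L)$ of $D(\mathcal{A}^*)$), and $\psi^3_{1,n}(L)$ evaluates to a nonzero multiple of $\gamma_n$ in both cases $\lambda_{1,n}\neq0$ and $\lambda_{1,n}=0$, using \eqref{eq: condition for selecting alpha} to guarantee $\cosh(\lambda_{1,n}L)+\alpha\sinh(\lambda_{1,n}L)\neq0$. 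With the PBH step corrected to the criterion $a_{1,n}+\lambda_{1,n}b_{1,n}\neq0$ and the translation done for that combination (either via the explicit $\psi^3_{1,n}(L)$ as above, or, as the paper does, by reducing $\langle a+\lambda b,\psi_{1,n}\rangle=0$ to an overdetermined ODE boundary-value problem whose solvability condition is $\gamma_n=0$), the proof goes through; as written, however, both the intermediate criterion and its identification with $\gamma_n\neq0$ are wrong.
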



\begin{proof}
By the Hautus test, $(A_1,B_1)$ does not satisfy the Kalman condition if and only if there exist $\lambda\in\mathbb{C}$ and $(x_1,x_2)\in\mathbb{C}\times\mathbb{C}^{N_0}\setminus\{(0,0)\}$ such that $\overline{x_2}^\top B_{a,0} = \lambda \overline{x_1}^\top$, $\overline{x_2}^\top A_0 = \lambda \overline{x_2}^\top$, and $\overline{x_1}^\top + \overline{x_2}^\top B_{b,0} = 0$. This is possible if and only if there exists $x_2 \neq 0$ such that $A_0^* x_2 = \overline{\lambda} x_2$ and $\overline{x_2}^\top \big( B_{a,0} + \lambda B_{b,0} \big) = 0$, i.e., if and only if $\lambda\in\Lambda=\{\lambda_{1,1} , \lambda_{1,2} , \ldots , \lambda_{1,N_0} \}$ while (because the eigenvalues of $\mathcal{A}^*$ are simple) there exists $z\in D(\mathcal{A}^*)\setminus\{0\}$ such that $\mathcal{A}^*z=\overline{\lambda}z$ and $\langle a + \lambda b , z \rangle = 0$. Setting $z=(z^1,z^2,z^3)$, the equation $\mathcal{A}^*z=\overline{\lambda}z$ gives
\begin{subequations}\label{eq: proof Kalman condition - sys eq to be solved - 1-5}
\begin{align}
& (z^1)'' + c z^1 = \overline{\lambda} z^1 \label{eq: proof Kalman condition - sys eq to be solved - 1} \\
& - z^3 + \int_0^{(\cdot)} \int_\tau^L \beta(s) z^1(s) \,\mathrm{d}s\,\mathrm{d}\tau = \overline{\lambda} z^2 \label{eq: proof Kalman condition - sys eq to be solved - 2} \\
& - (z^2)'' = \overline{\lambda} z^3 \label{eq: proof Kalman condition - sys eq to be solved - 3} \\
& z^1(0)=z^1(L)=z^2(0)=z^3(0)=0 \label{eq: proof Kalman condition - sys eq to be solved - 4} \\ 
& (z^2)'(L) - \alpha z^3(L) = 0 \label{eq: proof Kalman condition - sys eq to be solved - 5}
\end{align}
\end{subequations}
while 
\begin{align*}
0 
& = \langle a + \lambda b , z \rangle 
= \int_0^L \big( \big(\tfrac{x}{\alpha L}\big)' \overline{(z^2)'(x)} - \tfrac{\lambda x}{\alpha L} \overline{z^3(x)} \big) \mathrm{d}x \\
& = \big[ \tfrac{x}{\alpha L} \overline{(z^2)'(x)} \big]_{x=0}^{x=L} - \int_0^L \tfrac{x}{\alpha L} \overline{\big( (z^2)''(x) + \overline{\lambda} z^3(x) \big)} \,\mathrm{d}x \\
& = \tfrac{1}{\alpha} \overline{(z^2)'(L)}
\end{align*}
where we have used \eqref{eq: proof Kalman condition - sys eq to be solved - 3}. Combining with \eqref{eq: proof Kalman condition - sys eq to be solved - 5}, we get
\begin{equation}\label{eq: proof Kalman condition - sys eq to be solved - 6}
(z^2)'(L) = z^3(L) = 0 .
\end{equation}

Let us discard the case $z^1 = 0$ by noting that, in this case, \eqref{eq: proof Kalman condition - sys eq to be solved - 2} gives $- z^3 = \overline{\lambda} z^2$. Combining this result with \eqref{eq: proof Kalman condition - sys eq to be solved - 3} and \eqref{eq: proof Kalman condition - sys eq to be solved - 6} we infer that $(z^3)'' - (\overline{\lambda})^2 z^3 = 0$ with $z^3(L)=(z^3)'(L)=0$. By Cauchy uniqueness, we deduce that $z^3 = 0$. Finally, \eqref{eq: proof Kalman condition - sys eq to be solved - 3} gives $(z^2)'' = 0$ with $z^2(0)=(z^2)'(L) = 0$, hence $z^2 =0$. This is a contradiction with the initial assumption that $z \neq 0$.

Hence, we must have $z^1 \neq 0$. Based on \eqref{eq: proof Kalman condition - sys eq to be solved - 1} and \eqref{eq: proof Kalman condition - sys eq to be solved - 4} we have $(z^1)'' = \overline{(\lambda-c)} z^1$ with $z^1(0)=z^1(L)=0$. Then there exist $A \neq 0$ and $n \in\mathbb{N}^*$ such that $z^1(x) = A\sin\big(\tfrac{n\pi}{L}x\big)$ and $\lambda = c - \tfrac{n^2 \pi^2}{L^2}$. Since $\lambda\in\Lambda$, we have $\lambda = \lambda_{1,n}$ for some $n\in\{1,\ldots,N_0\}$. We infer from \eqref{eq: proof Kalman condition - sys eq to be solved - 2}, \eqref{eq: proof Kalman condition - sys eq to be solved - 3}, and \eqref{eq: proof Kalman condition - sys eq to be solved - 6} that
\begin{subequations}
\begin{align}
& (z^3)''(x) - \lambda_{1,n}^2 z^3(x) = - A \beta(x) \sin\big(\tfrac{n\pi}{L}x\big) , \label{eq: proof Kalman condition - sys eq to be solved - 7} \\
& z^3(0) = z^3(L) = (z^3)'(L) = 0 , \label{eq: proof Kalman condition - sys eq to be solved - 8}
\end{align}
\end{subequations}
where we recall that $\lambda_{1,n} = c - \tfrac{n^2 \pi^2}{L^2}$.

Assume first that $\lambda_{1,n} \neq 0$. In this case, integrating \eqref{eq: proof Kalman condition - sys eq to be solved - 7}, there exists $\delta_1,\delta_2\in\mathbb{R}$ such that
\begin{align*}
z^3(x) = & \Big( \delta_1 + \tfrac{A}{2\lambda_{1,n}} \int_0^x e^{\lambda_{1,n} s} \beta(s) \sin\big( \tfrac{n \pi}{L} s \big) \,\mathrm{d}s \Big) e^{-\lambda_{1,n} x} \\
& + \Big( \delta_2 - \tfrac{A}{2\lambda_{1,n}} \int_0^x e^{-\lambda_{1,n} s} \beta(s) \sin\big( \tfrac{n \pi}{L} s \big) \,\mathrm{d}s \Big) e^{\lambda_{1,n} x} .
\end{align*}
The condition $z^3(0) = 0$ gives $\delta_1 + \delta_2 = 0$, hence 
\begin{equation*}
z^3(x) = - 2 \delta_1 \sinh(\lambda_{1,n} x) + \tfrac{A}{\lambda_{1,n}} \int_0^x \beta(s)  \sinh(\lambda_{1,n}(s-x)) \sin\big( \tfrac{n \pi}{L} s \big) \,\mathrm{d}s .
\end{equation*}
The conditions $z^3(L) = 0$ and $(z^3)'(L) = 0$ give the system
\begin{equation*}
\begin{pmatrix}
\sinh(\lambda_{1,n} L) & \chi_1 \\
\cosh(\lambda_{1,n} L) & \chi_2
\end{pmatrix}
\begin{pmatrix}
-2 \delta_1 \lambda_{1,n} \\ A
\end{pmatrix}
= 0 
\end{equation*}
where $\chi_1 = \int_0^L \beta(s) \sinh(\lambda_{1,n}(s-L)) \sin\big( \tfrac{n \pi}{L} s \big) \,\mathrm{d}s$ and $\chi_2 = -\int_0^L \beta(s) \cosh(\lambda_{1,n}(s-L)) \sin\big( \tfrac{n \pi}{L} s \big) \,\mathrm{d}s$. Since $A \neq 0$ and $\lambda_{1,n} \neq 0$, the determinant of the above $2 \times 2$ matrix must be zero, i.e., 
\begin{align*}
0 = \int_0^L \beta(s) \sinh(\lambda_{1,n} s) \sin\big( \tfrac{n \pi}{L} s \big) \,\mathrm{d}s = \gamma_n .
\end{align*}
Under this condition, one can compute $A \neq 0$ and $\delta_1$, and thus obtain $z^3$. Finally, $z^2$ is obtained by integrating twice \eqref{eq: proof Kalman condition - sys eq to be solved - 3} and using the conditions $z^2(0)=(z^2)'(L)=0$ borrowed from \eqref{eq: proof Kalman condition - sys eq to be solved - 4} and \eqref{eq: proof Kalman condition - sys eq to be solved - 6}. The obtained $z=(z^1,z^2,z^3) \neq 0$ satisfies \eqref{eq: proof Kalman condition - sys eq to be solved - 1-5} and \eqref{eq: proof Kalman condition - sys eq to be solved - 6}.

Assume now that $\lambda_{1,n}=0$. In this case, \eqref{eq: proof Kalman condition - sys eq to be solved - 7} reduces to $(z^3)'' = - A \beta(x) \sin\big(\tfrac{n\pi}{L}x\big)$. Owing to \eqref{eq: proof Kalman condition - sys eq to be solved - 8}, we infer from the first and third conditions that $z^3(x)= A \int_0^x \int_\tau^L \beta(s) \sin\big(\tfrac{n\pi}{L}s\big) \,\mathrm{d}s\,\mathrm{d}\tau$ while the second condition gives 
$$0 = \int_0^L \int_\tau^L \beta(s) \sin\big(\tfrac{n\pi}{L}s\big) \,\mathrm{d}s\,\mathrm{d}\tau = \gamma_n$$
because $A \neq 0$. As previously, we then obtain $z^2$ by integrating twice \eqref{eq: proof Kalman condition - sys eq to be solved - 3} with the conditions $z^2(0)=(z^2)'(L)=0$ borrowed from \eqref{eq: proof Kalman condition - sys eq to be solved - 4} and \eqref{eq: proof Kalman condition - sys eq to be solved - 6}. Hence, the computed $z=(z^1,z^2,z^3) \neq 0$ satisfies \eqref{eq: proof Kalman condition - sys eq to be solved - 1-5} and \eqref{eq: proof Kalman condition - sys eq to be solved - 6}.
\end{proof}

\begin{remark}\label{rem_modes}
    The result of Lemma~\ref{kalman:contro:lemma} can be interpreted as follows. Since $X(t)=\big(y(t,\cdot),z(t,\cdot),\partial_t z(t,\cdot)\big)^\top$, we have $W = X + b v$. Hence
    \begin{subequations}\label{eq: spectral reduction bis}
    \begin{align}
    \dot{x}_{1,n} & = \lambda_{1,n} x_{1,n} + \beta_{1,n} v , \qquad n\in\mathbb{N}^*,\\
    \dot{x}_{2,m} & = \lambda_{2,m} x_{2,m} + \beta_{2,m} v , \qquad m \in\mathbb{Z},
    \end{align}
    \end{subequations}
    where $x_{1,n}(t) = \langle X(t,\cdot) , \psi_{1,n} \rangle$, $x_{2,m}(t) = \langle X(t,\cdot) , \psi_{2,m} \rangle$, 
    $\beta_{1,n} = a_{1,n} + \lambda_{1,n} b_{1,n}$ and $\beta_{2,m} = a_{2,m} + \lambda_{2,m} b_{2,m}$. The computations done in the proof of Lemma~\ref{kalman:contro:lemma} show that $\beta_{i,l} = \tfrac{1}{\alpha} \overline{(\psi_{i,l}^2)'(L)} = \overline{\psi_{i,l}^3(L)}$. This means that the control has no impact on the mode $\lambda_{i,l}$ if and only if $\psi_{i,l}^3(L) = 0$.
\end{remark}

\subsection{Discussion on the coefficients $\gamma_n$}

We discuss the behavior of the coefficients $\gamma_n$ for particular examples of functions $\beta$. Let us assume, in the present section, that
\begin{equation}\label{beta_char}
\beta(x) = \beta_0 \, \mathds{1}_{[a,b]}(x) \qquad \forall x\in(0,L)
\end{equation}
for some $\beta_0 \in\mathbb{R}\setminus\{0\}$ and $0 \leq a < b \leq L$. 
Recalling that $\lambda_{1,n} = c - \tfrac{n^2 \pi^2}{L^2}$, we infer from \eqref{eq: def gamma_1 - 1} that
\begin{subequations}
\label{gamma:def:remark}
\begin{align}
\gamma_n 
= & \tfrac{\beta_0}{\lambda_{1,n}^2 + \tfrac{n^2 \pi^2}{L^2}} \Big( - \tfrac{n\pi}{L} \sinh( \lambda_{1,n} b ) \cos\big( \tfrac{n\pi b}{L} \big) + \tfrac{n\pi}{L} \sinh( \lambda_{1,n} a ) \cos\big( \tfrac{n\pi a}{L} \big)  \nonumber \\
& \hspace{3cm} + \lambda_{1,n} \cosh( \lambda_{1,n} b ) \sin\big( \tfrac{n\pi b}{L} \big) - \lambda_{1,n} \cosh ( \lambda_{1,n} a ) \sin\big( \tfrac{n\pi a}{L} \big) \Big)  
\end{align}
if $\tfrac{n^2 \pi^2}{L^2}\neq c$, and
\begin{equation}
\gamma_n = - \tfrac{\beta_0 L}{n\pi} \big( b \cos\big(\tfrac{n\pi}{L} b \big) - a \cos\big(\tfrac{n\pi}{L} a \big) \big) + \tfrac{\beta_0 L^2}{n^2\pi^2} \big( \sin\big(\tfrac{n\pi}{L} b \big) - \sin\big(\tfrac{n\pi}{L} a \big) \big) 
\end{equation}
\end{subequations}
if $\tfrac{n^2 \pi^2}{L^2} = c$.
There are some critical values of $a,b$ for which there exists $n\in\mathbb{N}^*$ such that $\gamma_n=0$ and thus controllability is lost for the mode $n$ (see Lemma \ref{kalman:contro:lemma}).
Let us elaborate further.

In the particular case $a=0$ and $b=L$, i.e., $\beta\equiv\beta_0$ on $(0,L)$, we get from \eqref{gamma:def:remark} that
$$
\gamma_n = \tfrac{(-1)^{n}\beta_0 \tfrac{n\pi}{L}}{\big( \tfrac{n^2 \pi^2}{L^2} - c \big)^2 + \tfrac{n^2 \pi^2}{L^2}} \sinh\big(\big( \tfrac{n^2 \pi^2}{L^2} - c \big) L \big) \neq 0 
$$
if $\tfrac{n^2 \pi^2}{L^2}\neq c$, and $\gamma_n = \tfrac{(-1)^{n+1}\beta_0 L^2}{n\pi} \neq 0$ if $\tfrac{n^2 \pi^2}{L^2} = c$.
Hence, the result of Lemma~\ref{kalman:contro:lemma} ensures that the finite-dimensional system \eqref{truncature:eq:aug} is always controllable, whatever the numbers $N_0$ of modes of the parabolic spectrum captured by the dynamics. 

Consider now the case $L=1$, $c = 50$, $\beta_0 = 1$, and $a = 0$. Figure~\ref{fig: gamma_2_for_L_1_c_50_a_0} shows $\gamma_2$ as a function of $b \in [0,L]$; $\gamma_2$ vanishes at $b\approx 0.586$ and thus, controllability is lost for this value of $b$. 
\begin{figure}[h]
	\centering
	\includegraphics[width=3.5in]{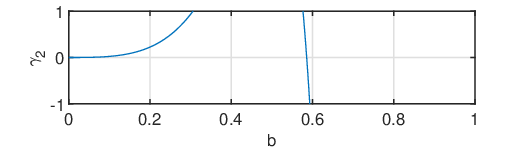}
	\caption{$\gamma_2$ as a function of $b \in [0,1]$, with $L=1$, $c=50$, $\beta_0 = 1$, and $a=0$.}
	\label{fig: gamma_2_for_L_1_c_50_a_0}
\end{figure}

The lext lemma shows that loss of controllability rarely happens.
\begin{lemma}\label{prop:contr}
Define $S=\{(a,b)\,\mid\, 0\leq a<b\leq L\}$ and recall $\gamma_n$ is defined by \eqref{def_gamma_n}. The subset $\hat S$ of $S$ such that $\gamma_n\neq 0$ for any $n\in\mathbb{N}^*$ is dense and of full Lebesgue measure in $S$.
\end{lemma}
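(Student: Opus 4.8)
The plan is to fix $n\in\mathbb{N}^*$ and show that the "bad set"
$Z_n = \{(a,b)\in S \mid \gamma_n(a,b)=0\}$ is a closed set of Lebesgue measure zero with empty interior in $S$; then $\hat S = S\setminus\bigcup_{n\ge 1} Z_n$ is, by the Baire category theorem together with countable subadditivity of Lebesgue measure, dense and of full measure in $S$. Since $S$ is an open subset of $\mathbb{R}^2$ and the map $(a,b)\mapsto\gamma_n(a,b)$ is real-analytic on $S$ (it is built from the explicit formulas \eqref{gamma:def:remark}, which are polynomials composed with $\sin,\cos,\sinh,\cosh$, hence real-analytic; note the two cases $\frac{n^2\pi^2}{L^2}=c$ and $\frac{n^2\pi^2}{L^2}\ne c$ concern at most one value of $n$ and each yields an analytic expression), it suffices to prove that $\gamma_n$ is not identically zero on $S$. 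A nonzero real-analytic function on a connected open set has a zero set that is closed, nowhere dense, and of measure zero, which gives exactly the three properties needed for $Z_n$.

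To prove $\gamma_n\not\equiv 0$ on $S$, I would exhibit one point of $S$ where it is nonzero, or examine a limiting regime. The cleanest choice is to evaluate along the diagonal corner $a=0$, $b=L$, for which the Discussion already computed $\gamma_n=\frac{(-1)^n\beta_0\frac{n\pi}{L}}{(\frac{n^2\pi^2}{L^2}-c)^2+\frac{n^2\pi^2}{L^2}}\sinh\big((\frac{n^2\pi^2}{L^2}-c)L\big)$ when $\frac{n^2\pi^2}{L^2}\ne c$, and $\gamma_n=\frac{(-1)^{n+1}\beta_0 L^2}{n\pi}$ when $\frac{n^2\pi^2}{L^2}=c$; both are nonzero. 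The point $(0,L)$ lies on the boundary of the open set $S=\{0\le a<b\le L\}$ rather than its interior, so strictly I should instead note that $\gamma_n$ extends real-analytically to a neighbourhood of the closed triangle (the formulas are analytic in $(a,b)\in\mathbb{R}^2$), so nonvanishing at the corner $(0,L)$ already forces $\gamma_n\not\equiv 0$ on the connected open set $S$; equivalently, by continuity $\gamma_n$ is nonzero at interior points $(a,b)$ with $a$ small and $b$ close to $L$. Either way $\gamma_n$ is a nonzero analytic function on $S$.

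The remaining routine points: $S$ is connected and open in $\mathbb{R}^2$; a real-analytic function on a connected open subset of $\mathbb{R}^d$ that vanishes on a set of positive measure (equivalently, on a set with nonempty interior) vanishes identically — this is the standard analytic-continuation/identity theorem, which I would cite rather than reprove. Consequently each $Z_n$ has empty interior (so $S\setminus Z_n$ is open dense) and zero measure. Then $\hat S=\bigcap_{n\ge 1}(S\setminus Z_n)$ is a countable intersection of open dense sets, hence dense by Baire, and $S\setminus\hat S=\bigcup_n Z_n$ has measure zero by countable subadditivity, so $\hat S$ has full measure.

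The main obstacle is purely the verification that $\gamma_n$ is real-analytic and not identically zero — i.e., ruling out the degenerate possibility that the explicit expression in \eqref{gamma:def:remark} collapses to $0$ for all $(a,b)$, and handling the corner point carefully since it sits on $\partial S$; once nonvanishing somewhere is established, the topological/measure-theoretic conclusion is immediate. (A minor subtlety, if one wants to be careful: for general $\beta\in L^\infty$ rather than the characteristic-function form \eqref{beta_char}, the analyticity in $(a,b)$ is not available and the statement would need a different argument; here the lemma is understood within the setting \eqref{beta_char} of this subsection, where $a,b$ are precisely the endpoints of the support, so $\gamma_n$ is the analytic function given by \eqref{gamma:def:remark}.)
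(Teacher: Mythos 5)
Your proof is correct and follows essentially the same route as the paper: for each $n$ the zero set $Z_n$ of the analytic function $\gamma_n$ is closed, of measure zero and nowhere dense, and the conclusion follows from countable subadditivity and the Baire category theorem. The only difference is that you explicitly verify the nontriviality of $\gamma_n$ (via its nonvanishing value at $(a,b)=(0,L)$, computed earlier in the paper), a point the paper's proof simply asserts, so your write-up is a slightly more detailed version of the same argument.
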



\begin{proof}
Given any $n\in\mathbb{N}^*$, the set $Z_n=\{(a,b)\in S\,\mid\, \gamma_n=0\}$ is closed, of zero measure and of empty interior because $\gamma_n$ is a nontrivial analytic function of $(a,b)$. 
The union $Z$ of all $Z_n$, $n\in\mathbb{N}^*$, is of zero measure, and of empty interior by the Baire theorem.
The set $\hat S$ is then defined as the complement of $Z$ in $S$.
\end{proof}


\begin{remark}\label{rem_nonrobust}
Lemmas~\ref{kalman:contro:lemma} and \ref{prop:contr} imply that, when $(a,b)\in \hat S$, the finite-dimensional system \eqref{truncature:eq:aug} is always controllable, whatever the numbers $N_0$ of modes of the parabolic spectrum captured by the dynamics. However, although $\hat S$ is dense and of full Lebesgue measure in $S$, it may fail to be open, and thus robustness of controllability with respect to $(a,b)$ may fail.
\end{remark}

\subsection{Measurement}

In view of the control design, based on \eqref{eq: projection system trajectory into Riesz basis}, we note that the system output $y_o(t)$ defined by \eqref{eq: measurement heat distributed} is expanded as
\begin{align*}
y_o(t) & = \int_0^L c_o(x) y(t,x) \,\mathrm{d}x = \int_0^L c_o(x) w^1(t,x) \,\mathrm{d}x \\
& = \sum_{n\in\mathbb{N}^*} c_{1,n} w_{1,n}(t) + \sum_{m \in \mathbb{Z}} c_{2,m} w_{2,m}(t) \\
&= C_0 W_0(t) + \sum_{n \geq N_0 + 1} c_{1,n} w_{1,n}(t) + \sum_{m \in \mathbb{Z}} c_{2,m} w_{2,m}(t)
\end{align*}
with 
\begin{subequations}\label{eq: def c_{i,k}}
\begin{align}
c_{1,n} & = \int_0^L c_o(x) \phi^1_{1,n}(x) \,\mathrm{d}x ,\quad n\in\mathbb{N}^*,  \\
c_{2,m} & = \int_0^L c_o(x) \phi^1_{2,m}(x) \,\mathrm{d}x ,\quad m \in \mathbb{Z}.
\end{align}
\end{subequations}
and
$C_0 = \begin{pmatrix}
c_{1,1} & c_{1,2} & \ldots & c_{1,N_0}
\end{pmatrix} \in\mathbb{R}^{1 \times N_0}$.
Since the matrix $A_0$ is diagonal with simple eigenvalues, we have the following result which will be instrumental for the proposed output-feedback control strategy.


\begin{lemma}\label{lem: obs for cont}
The pair $(A_0,C_0)$ satisfies the Kalman condition if and only if 
$$
c_{1,n} =  \sqrt{\tfrac{2}{L}} \int_0^L \!\! c_o(x) \sin\big( \tfrac{n\pi}{L} x \big) \,\mathrm{d}x \neq 0 , \;\; \forall n\in\{1,\ldots,N_0\}. 
$$ 
\end{lemma}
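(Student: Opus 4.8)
The plan is to invoke the classical characterization of controllability/observability for a diagonal system with simple eigenvalues via the Popov--Belevitch--Hautus (PBH) test, which here becomes a transparent scalar condition. Since $A_0 = \mathrm{diag}(\lambda_{1,1},\ldots,\lambda_{1,N_0})$ with the $\lambda_{1,n}$ pairwise distinct (the parabolic eigenvalues $c - n^2\pi^2/L^2$ are simple), the pair $(A_0,C_0)$ is observable if and only if $C_0$ has no zero entry. First I would write the Hautus observability test: $(A_0,C_0)$ is observable if and only if there is no $\lambda\in\mathbb{C}$ and no $x\in\mathbb{C}^{N_0}\setminus\{0\}$ with $A_0 x = \lambda x$ and $C_0 x = 0$. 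Because $A_0$ is diagonal with distinct diagonal entries, every eigenvector is (a multiple of) a standard basis vector $e_n$, so the only way the test can fail is to have $\lambda = \lambda_{1,n}$ for some $n\in\{1,\ldots,N_0\}$ and $C_0 e_n = c_{1,n} = 0$. Equivalently, $(A_0,C_0)$ is observable if and only if $c_{1,n}\neq 0$ for every $n\in\{1,\ldots,N_0\}$.

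Next I would substitute the explicit form of $c_{1,n}$. By the definition in \eqref{eq: def c_{i,k}} and the formula $\phi^1_{1,n}(x) = \sqrt{2/L}\,\sin(n\pi x/L)$ from Lemma~\ref{lem: eigenstructures of A}, we have
\[
c_{1,n} = \int_0^L c_o(x)\,\phi^1_{1,n}(x)\,\diff x = \sqrt{\tfrac{2}{L}}\int_0^L c_o(x)\sin\big(\tfrac{n\pi}{L}x\big)\,\diff x ,
\]
which is exactly the quantity appearing in the statement. Combining with the previous paragraph yields the claimed equivalence.

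The only slightly delicate point is making sure that "Kalman condition" for $(A_0,C_0)$ (i.e. the observability Gramian / Kalman observability matrix has full rank) is indeed equivalent to the Hautus condition; this is standard finite-dimensional linear systems theory and requires no additional hypotheses here. One should also remark that $C_0$ is a row vector (a scalar output), so "no zero entry" is the complete condition; there is no subtlety about the output dimension. Thus the proof reduces to: apply the Hautus test, use the diagonal structure with simple eigenvalues to see that eigenvectors are coordinate vectors, and read off $C_0 e_n = c_{1,n}$. I do not expect any real obstacle — the result is the dual of the already-proven controllability characterization and the computation is immediate once the eigenvector structure is used.

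\begin{proof}
Since $A_0 = \mathrm{diag}(\lambda_{1,1},\ldots,\lambda_{1,N_0})$ and the parabolic eigenvalues $\lambda_{1,n} = c - \tfrac{n^2\pi^2}{L^2}$ are pairwise distinct, every eigenvector of $A_0$ is proportional to a vector $e_n$ of the canonical basis of $\mathbb{R}^{N_0}$, with $A_0 e_n = \lambda_{1,n} e_n$. By the Hautus test, $(A_0,C_0)$ satisfies the Kalman (observability) condition if and only if there is no $\lambda\in\mathbb{C}$ and no $x\in\mathbb{C}^{N_0}\setminus\{0\}$ such that $A_0 x = \lambda x$ and $C_0 x = 0$. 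Because of the diagonal structure with simple eigenvalues, such a pair exists if and only if $\lambda = \lambda_{1,n}$ and $x = e_n$ for some $n\in\{1,\ldots,N_0\}$ with $C_0 e_n = 0$. Since $C_0 e_n = c_{1,n}$, we conclude that $(A_0,C_0)$ satisfies the Kalman condition if and only if $c_{1,n}\neq 0$ for every $n\in\{1,\ldots,N_0\}$. Finally, by \eqref{eq: def c_{i,k}} and the expression of $\phi^1_{1,n}$ given in Lemma~\ref{lem: eigenstructures of A},
\[
c_{1,n} = \int_0^L c_o(x)\,\phi^1_{1,n}(x)\,\diff x = \sqrt{\tfrac{2}{L}}\int_0^L c_o(x)\sin\big(\tfrac{n\pi}{L}x\big)\,\diff x ,
\]
which yields the announced characterization.
\end{proof}
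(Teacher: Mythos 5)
Your proof is correct and follows the same reasoning the paper relies on: since $A_0$ is diagonal with pairwise distinct eigenvalues, observability of $(A_0,C_0)$ reduces (via the Hautus test) to the non-vanishing of each entry $c_{1,n}$ of $C_0$, and the explicit formula for $c_{1,n}$ follows from the expression of $\phi^1_{1,n}$. The paper states this lemma without a detailed proof, appealing precisely to this standard diagonal/simple-eigenvalue argument, so your write-up simply makes explicit what the authors left implicit.
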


\section{Feedback stabilization}\label{sec: control design}

\noindent
Given two integers $N \geq N_0 +1$ and $M \in\mathbb{N}$ to be chosen later, we define the following control strategy:
\begin{subequations}\label{eq: controller}
\begin{align}
\dot{\hat{w}}_{1,n} & = \lambda_{1,n} \hat{w}_{1,n} + a_{1,n} v + b_{1,n} v_d \nonumber \\
& \phantom{=}\; - l_{1,n} \bigg( \sum_{k=1}^N c_{1,k} \hat{w}_{1,k} + \sum_{\vert l \vert \leq M} c_{2,l} \hat{w}_{2,l} - y_o \bigg) , \qquad 1 \leq n \leq N_0 , \\
\dot{\hat{w}}_{1,n} & = \lambda_{1,n} \hat{w}_{1,n} + a_{1,n} v + b_{1,n} v_d , \qquad N_0 + 1 \leq n \leq N , \\
\dot{\hat{w}}_{2,m} & = \lambda_{2,m} \hat{w}_{2,m} + a_{2,m} v + b_{2,m} v_d , \qquad \vert m \vert \leq M , \\
v_d & = k_v v + \sum_{n=1}^{N_0} k_{1,n} \hat{w}_{1,n} ,
\end{align}
\end{subequations}
where $k_v$ and $k_{1,n}$, for $1\leq n\leq N_0$, are the feedback gains and $l_{1,n}$, for $1\leq n\leq N_0$, are the observer gains. This finite-dimensional control strategy leveraging a Luenberger-type observer on a finite number of modes is inspired by the seminal work~\cite{sakawa1983feedback} and its more recent developments~\cite{katz2020constructive, lhachemi2020finite, lhachemi2021nonlinear, grune2021finite}. We set
\begin{align*}
K & = \begin{pmatrix}
k_v & k_{1,1} & k_{1,2} & \ldots & k_{1,N_0}
\end{pmatrix} \in\mathbb{R}^{1 \times (N_0+1)} , \\
L & = \begin{pmatrix}
l_{1,1} & l_{1,2} & \ldots & l_{1,N_0}
\end{pmatrix}^\top \in\mathbb{R}^{N_0} .
\end{align*}

We now state the main result of this paper.

\begin{theorem}\label{thm: main result 2}
Let $\delta > 0$ be arbitrary. Let $N_0\in\mathbb{N}^*$ and $\alpha > 1$ be such that $\lambda_{1,N_0+1} < - \delta$ and $\rho = \tfrac{1}{2L} \log\big(\tfrac{\alpha-1}{\alpha+1}\big) = \operatorname{Re}\lambda_{2,m} < - \delta$. Assume that:
\begin{itemize}
\item $\gamma_n \neq 0$ for any $n\in\{1,\ldots,N_0\}$, where $\gamma_n$ is defined by \eqref{eq: def gamma_1 - 1} if $\tfrac{n^2 \pi^2}{L^2}\neq c$ and by \eqref{eq: def gamma_1 - 2} if $\tfrac{n^2 \pi^2}{L^2} = c$;
\item $c_{1,n} \neq 0$ for any $n\in\{1,\ldots,N_0\}$, where $c_{1,n}$ is defined by \eqref{eq: def c_{i,k}}. 
\end{itemize}
Let $K\in\mathbb{R}^{1 \times (N_0 +1)}$ and $L\in\mathbb{R}^{N_0}$ be such that $A_1 + B_1 K$ and $A_0 - L C_0$ are Hurwitz with eigenvalues of real part less than $-\delta < 0$. 

Then, for all integers $N \geq N_0 +1$ and $M$ sufficiently large\footnote{They must be chosen large enough so that the inequalities \eqref{constraints_NM} are satisfied.},
there exists $C > 0$ such that 
any solution of the system \eqref{eq: cascade equation} in closed-loop with the output-feedback control \eqref{eq: measurement heat distributed}, \eqref{eq: preliminary feedback}, \eqref{eq: inegral action}, \eqref{eq: controller} satisfies
\begin{align}
& \big\Vert ( y(t,\cdot) , z(t,\cdot) , \partial_t z(t,\cdot) ) \big\Vert_{H} + \vert v(t) \vert + \sum_{n=1}^N \vert \hat{w}_{1,n}(t) \vert + \sum_{\vert m \vert \leq M} \vert \hat{w}_{2,m}(t) \vert \nonumber \\
& \leq
C e^{-\delta t}
\Big(
\big\Vert ( y(0,\cdot) , z(0,\cdot) , \partial_t z(0,\cdot) ) \big\Vert_{H} + \vert v(0) \vert + \sum_{n=1}^N \vert \hat{w}_{1,n}(0) \vert + \sum_{\vert m \vert \leq M} \vert \hat{w}_{2,m}(0) \vert \Big)  \label{eq: exponential stability}
\end{align}
for every $t\geq 0$, where $H$ is either the Hilbert space $\mathcal{H}^0$ defined by \eqref{eq: state-space} or $\mathcal{H}^1$ defined by \eqref{eq: space H1}. 
\end{theorem}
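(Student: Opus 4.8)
The plan is to pass to the modal coordinates provided by the spectral reduction \eqref{eq: spectral reduction}--\eqref{eq: inegral action} and to exhibit the closed loop as a finite-dimensional core, rendered Hurwitz by the pole placement, that is weakly coupled to countably many scalar modes which are already exponentially stable with a uniform margin. Recalling from Remark~\ref{rem_modes} that $W=X+bv$, I introduce the observer errors $e_{1,n}=w_{1,n}-\hat w_{1,n}$ for $1\le n\le N$ and $e_{2,m}=w_{2,m}-\hat w_{2,m}$ for $\vert m\vert\le M$, set $e_0=(e_{1,1},\dots,e_{1,N_0})^\top$ and recall $W_1=(v,w_{1,1},\dots,w_{1,N_0})^\top$, and let $K_0=(k_{1,1},\dots,k_{1,N_0})$ be the block of $K$ acting on $W_0$. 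Subtracting \eqref{eq: controller} from \eqref{eq: spectral reduction}, and using $y_o=\sum_n c_{1,n}w_{1,n}+\sum_m c_{2,m}w_{2,m}$ together with $v_d=KW_1-K_0e_0$, one obtains the cascade form
\[
\dot W_1 = (A_1+B_1K)W_1 - B_1K_0 e_0,\qquad \dot e_0 = (A_0-LC_0)e_0 - L\,r,
\]
with $r=\sum_{N_0<k\le N}c_{1,k}e_{1,k}+\sum_{\vert l\vert\le M}c_{2,l}e_{2,l}+\sum_{n>N}c_{1,n}w_{1,n}+\sum_{\vert m\vert>M}c_{2,m}w_{2,m}$, supplemented by the autonomous equations $\dot e_{1,n}=\lambda_{1,n}e_{1,n}$ ($N_0<n\le N$) and $\dot e_{2,m}=\lambda_{2,m}e_{2,m}$ ($\vert m\vert\le M$), and the forced equations $\dot w_{1,n}=\lambda_{1,n}w_{1,n}+a_{1,n}v+b_{1,n}v_d$ ($n>N_0$) and $\dot w_{2,m}=\lambda_{2,m}w_{2,m}+a_{2,m}v+b_{2,m}v_d$ ($m\in\mathbb{Z}$), where $v_d=KW_1-K_0e_0$ is linear in $(W_1,e_0)$. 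By the choice of $N_0$ and $\alpha$, every $\lambda_{1,n}$ with $n>N_0$ and every $\lambda_{2,m}$ has real part at most $-\delta-\eta_0$ for some fixed $\eta_0>0$, while $A_1+B_1K$ and $A_0-LC_0$ have spectrum in $\{\operatorname{Re}<-\delta\}$. Well-posedness of the closed loop on $H\times\mathbb{R}\times\mathbb{R}^N\times\mathbb{C}^{2M+1}$ follows from Lemmas~\ref{lem: A riesz spectral} and~\ref{lemma: Riesz basis H1}, by which $\mathcal{A}$ is Riesz-spectral on $H$ and generates a $C_0$-semigroup, together with a bounded-perturbation argument (the lifting vectors satisfy $a,b\in\mathcal{H}^0\cap\mathcal{H}^1$ and the controller is finite-dimensional).

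Next I would build a Lyapunov functional adapted to the two Riesz bases simultaneously. Pick $P_1,P_0\succ0$ with $(A_1+B_1K)^\top P_1+P_1(A_1+B_1K)\preceq-I-2\delta P_1$ and $(A_0-LC_0)^\top P_0+P_0(A_0-LC_0)\preceq-I-2\delta P_0$ (possible since $A_1+B_1K+\delta I$ and $A_0-LC_0+\delta I$ are Hurwitz), set $\theta_n=1$ when $H=\mathcal{H}^0$ and $\theta_n=(n\pi/L)^2$ when $H=\mathcal{H}^1$, and consider
\[
V = W_1^\top P_1 W_1 + \kappa_0\,e_0^\top P_0 e_0 + \kappa_1\!\!\sum_{N_0<n\le N}\!\!\vert e_{1,n}\vert^2 + \kappa_2\!\!\sum_{\vert m\vert\le M}\!\!\vert e_{2,m}\vert^2 + \kappa_3\!\sum_{n>N_0}\!\theta_n\vert w_{1,n}\vert^2 + \kappa_4\!\sum_{m\in\mathbb{Z}}\!\vert w_{2,m}\vert^2 .
\]
Differentiating along the closed loop and estimating the cross terms by Young's inequality, the couplings to the core $(W_1,e_0)$ are controlled by $\sum_{n>N_0}\theta_n(\vert a_{1,n}\vert^2+\vert b_{1,n}\vert^2)$, $\sum_m(\vert a_{2,m}\vert^2+\vert b_{2,m}\vert^2)$, $\sum_{n>N}\vert c_{1,n}\vert^2/\theta_n$ and $\sum_{\vert m\vert>M}\vert c_{2,m}\vert^2$, all of which are finite: $(a_{i,\cdot}),(b_{i,\cdot})$ are the coordinates of $a,b\in\mathcal{H}^0\cap\mathcal{H}^1$ in the Riesz basis $\Phi$ (resp.\ $\Phi^1$), so these sums converge for \emph{both} weightings — for $\theta_n=(n\pi/L)^2$ this is exactly the statement that the $\Phi^1$-coordinates of $a,b$ are square-summable — while $(c_{1,n})\in\ell^2$ because $c_o\in L^2$ and $\{\phi^1_{1,n}\}$ is orthonormal in $L^2(0,L)$, and $\sum_m\Vert\phi^1_{2,m}\Vert_{L^2}^2<\infty$ by the estimates in the proof of Lemma~\ref{lem: A riesz spectral}. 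Fixing the constants in the order: $\kappa_0$ large (to absorb the $e_0$-term created by the $W_1$-equation), then $\kappa_1,\kappa_2$ large (to absorb the $e_{1,\cdot}$- and $e_{2,\cdot}$-terms created by $r$), then $\kappa_3,\kappa_4$ small (so that the cross terms fed back into $W_1$ and $e_0$ stay below the dissipation reserves already secured), and finally $N,M$ large enough that the residual weights $\sum_{n>N}\vert c_{1,n}\vert^2/\theta_n$ and $\sum_{\vert m\vert>M}\vert c_{2,m}\vert^2$ are sufficiently small — these are the inequalities~\eqref{constraints_NM} — one reaches $\dot V\le-2\delta V$, hence $V(t)\le e^{-2\delta t}V(0)$ and $\sqrt{V(t)}\le e^{-\delta t}\sqrt{V(0)}$.

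It then remains to translate this back to the claimed norms. Since $\Phi$ is a Riesz basis of $\mathcal{H}^0$ and $\Phi^1$ of $\mathcal{H}^1$, there are constants $0<c\le C$ with $c\Vert W\Vert_H^2\le\sum_n\theta_n\vert w_{1,n}\vert^2+\sum_m\vert w_{2,m}\vert^2\le C\Vert W\Vert_H^2$; as $W=X+bv$ and $b\in H$, $\Vert X(t,\cdot)\Vert_H$ is then controlled by $\Vert W\Vert_H+\vert v\vert$ and conversely. Moreover every term on the left-hand side of \eqref{eq: exponential stability} is recovered from $V$: $\vert v(t)\vert$ from $W_1$; $\hat w_{1,n}=w_{1,n}-e_{1,n}$ for $1\le n\le N$, with $w_{1,n}$ controlled by $W_1$ (if $n\le N_0$) or by the $\kappa_3$-sum (if $N_0<n\le N$, using $\theta_n\ge1$) and $e_{1,n}$ by $e_0$ or the $\kappa_1$-sum; and $\hat w_{2,m}=w_{2,m}-e_{2,m}$ for $\vert m\vert\le M$, with both terms in $V$. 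Thus the left-hand side of \eqref{eq: exponential stability} is bounded above and below by constant multiples of $\sqrt{V(t)}$ (and the initial data on the right by $\sqrt{V(0)}$), which with $\sqrt{V(t)}\le e^{-\delta t}\sqrt{V(0)}$ gives \eqref{eq: exponential stability}. The argument is identical for $H=\mathcal{H}^0$ and $H=\mathcal{H}^1$, only the weights $\theta_n$ changing; the conjugate symmetry $w_{2,-m}=\overline{w_{2,m}}$, $e_{2,-m}=\overline{e_{2,m}}$ along real trajectories is handled, as usual, by pairing conjugate modes.

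The conceptual ingredients are already in place — the Riesz-spectral structure of $\mathcal{A}$ and the explicit, necessary-and-sufficient controllability/observability of the reduced models (Lemmas~\ref{kalman:contro:lemma} and~\ref{lem: obs for cont}) — so the main difficulty is the bookkeeping of the Lyapunov estimate: one must verify that all weighted coefficient sums above are finite for \emph{both} weightings $\theta_n\equiv1$ and $\theta_n=(n\pi/L)^2$ (which is exactly where the memberships $a,b\in\mathcal{H}^1$ and the $O(\vert m\vert^{-3/2})$ decay of $\Vert(\phi^1_{2,m})'\Vert_{L^\infty}$ from Lemma~\ref{lemma: Riesz basis H1} enter), and then choose the weights $\kappa_0,\dots,\kappa_4$ and the truncation indices $N,M$ in a consistent order so that each cross term is dominated by a dissipation reserve kept aside for it.
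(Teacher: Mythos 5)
Your proposal is correct in substance and follows the same overall architecture as the paper: spectral reduction in the Riesz bases $\Phi$, $\Phi^1$, a finite-dimensional observer-based core made Hurwitz by the assumed pole placements, a Lyapunov functional combining a quadratic form on the finite part with weighted $\ell^2$ tails of the residual modes, Young's inequality for the cross terms, and feasibility obtained by letting $N,M$ grow so that the tail coefficient sums vanish. Where you genuinely deviate is in two technical mechanisms. First, the paper lumps the whole finite part (controller state, observer errors, and the modes $N_0<n\le N$, $\vert m\vert\le M$) into a single vector $Y$ of dimension growing with $N,M$ and uses one Lyapunov matrix $P$ solving $\bar F^\top P+PF+2\delta P=-I$, which forces it to invoke the $\Vert P\Vert=O(1)$ result of \cite[Appendix]{lhachemi2020finite}; your cascade/small-gain composition with fixed-dimension blocks $P_1,P_0$ and scalar weights $\kappa_1,\dots,\kappa_4$ sidesteps that uniformity issue entirely, at the price of the careful ordering of constants, which you state correctly (the $e$-blocks are autonomous, the core is driven only by $e_0$, and the tail-to-$e_0$ coupling is killed last by taking $N,M$ large). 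Second, for $H=\mathcal{H}^1$ you absorb the weights $\theta_n=(n\pi/L)^2$ by using $a,b\in\mathcal{H}^1$, so that $\sum_n n^2(\vert a_{1,n}\vert^2+\vert b_{1,n}\vert^2)<\infty$; the paper instead needs only $a,b\in\mathcal{H}^0$ and pays the extra factor $n^2/\epsilon$ against the parabolic dissipation $\lambda_{1,n}\sim -n^2\pi^2/L^2$ by choosing $\epsilon>L^2/\pi^2$. Your route is legitimate here because $a=(0,\tfrac{x}{\alpha L},0)$ and $b=(0,0,-\tfrac{x}{\alpha L})$ do belong to $\mathcal{H}^1$, but you should make explicit the one-line justification that the $\Phi^1$-coordinates are $\tfrac{n\pi}{L}a_{1,n}$ (resp.\ $\tfrac{n\pi}{L}b_{1,n}$): it follows from the continuous embedding $\mathcal{H}^1\hookrightarrow\mathcal{H}^0$ and uniqueness of expansions in the Riesz basis $\Phi$ of $\mathcal{H}^0$. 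Note also that the paper's trick is what makes Theorem~\ref{thm4} (pointwise and flux measurements, where $c_{1,n}$ is not $\ell^2$) go through with the scaling $\kappa>0$, so the paper's formulation is slightly more flexible; finally, your sufficient conditions on $N,M$ are analogous to, but not literally, the inequalities \eqref{constraints_NM}, which is harmless for the statement as footnoted.
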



\begin{remark}
As it follows from the proof, for any initial condition $(y(0,\cdot) , z(0,\cdot) , \partial_t z(0,\cdot))\in H$, there is a unique solution living in $H$.
\end{remark}


\begin{proof}
Let $\kappa\in[0,2)$ be such that 
$(c_{1,n}/n^\kappa)_{n\in\mathbb{N}^*}\in\ell^2(\mathbb{N}).$
In this proof we set $\kappa = 0$; different values will be chosen later in the proof of Theorem~\ref{thm4}. We define the observation discrepancies $e_{1,n} = w_{1,n} - \hat{w}_{1,n}$ and $e_{2,m} = w_{2,m} - \hat{w}_{2,m}$, the scaled quantity $\tilde{e}_{1,n} = n^\kappa e_{1,n}$, as well as the vectors
\begin{align*}
\hat{W}_0 & = \begin{pmatrix}
\hat{w}_{1,1} & \hat{w}_{1,2} & \ldots & \hat{w}_{1,N_0} 
\end{pmatrix}^\top , \\
E_1 & = \begin{pmatrix}
e_{1,1} & e_{1,2} & \ldots & e_{1,N_0} 
\end{pmatrix}^\top , \\
\hat{W}_2 & = \begin{pmatrix}
\hat{w}_{1,N_0 + 1} & \ldots & \hat{w}_{1,N} & \hat{w}_{2,0} & \ldots & \hat{w}_{2,-M} & \hat{w}_{2,M}
\end{pmatrix}^\top , \\
E_2 & = \begin{pmatrix}
\tilde{e}_{1,N_0 + 1} & \ldots & \tilde{e}_{1,N} & e_{2,0} & \ldots & e_{2,-M} & e_{2,M}
\end{pmatrix}^\top .
\end{align*}
We infer from \eqref{eq: spectral reduction} and \eqref{eq: controller} that
\begin{subequations}
\begin{align}
\dot{\hat{W}}_0 & = A_0 \hat{W}_0 + B_{a,0} v + B_{b,0} v_d + L C_0 E_1 + L C_1 E_2 + L \zeta_1 + L \zeta_2 , \\
\dot{E}_1 & = ( A_0 - L C_0 ) E_1 - L C_1 E_2 - L \zeta_1 - L \zeta_2 , \\
\dot{\hat{W}}_2 & = A_2 \hat{W}_2 + B_{a,1} v + B_{b,1} v_d , \\
\dot{E}_2 & = A_2 E_2 ,
\end{align}
\end{subequations} 
where 
\begin{align*}
A_2 & = \mathrm{diag}\big(
\lambda_{1,N_0 + 1} , \ldots , \lambda_{1,N} , \lambda_{2,0} , \ldots , \lambda_{2,-M} , \lambda_{2,M}
\big) , \\
B_{a,1} & = \begin{pmatrix}
a_{1,N_0+1} & \ldots & a_{1,N} & a_{2,0} & \ldots & a_{2,-M} & a_{2,M}
\end{pmatrix}^\top , \\
B_{b,1} & = \begin{pmatrix}
b_{1,N_0+1} & \ldots & b_{1,N} & b_{2,0} & \ldots & b_{2,-M} & b_{2,M}
\end{pmatrix}^\top , \\
C_1 & = \begin{pmatrix}
\tfrac{c_{1,N_0+1}}{(N_0+1)^\kappa} & \ldots & \tfrac{c_{1,N}}{N^\kappa} & c_{2,0} & \ldots & c_{2,-M} & c_{2,M}
\end{pmatrix} , \\
\zeta_1 & = \sum_{k\geq N+1} c_{1,k} w_{1,k} , \quad
\zeta_2 = \sum_{\vert l \vert \geq M+1} c_{2,l} w_{2,l} ,
\end{align*}
with $A_2 \in\mathbb{C}^{N-N_0+2M+1}$, $B_{a,1},B_{b,1} \in\mathbb{C}^{N-N_0+2M+1}$ and $C_1 \in\mathbb{C}^{N-N_0+2M+1}$. We note in particular that $\Vert C_1 \Vert = O(1)$ as $N,M \to \infty$. Defining the augmented vectors
\begin{equation*}
\hat{W}_1 = \begin{pmatrix} v \\ \hat{W}_0 \end{pmatrix}
, \quad
\tilde{L} = \begin{pmatrix} 0 \\ L \end{pmatrix} ,
\end{equation*}
we obtain
\begin{align*}
v_d & = K \hat{W}_1 , \\
\dot{\hat{W}}_1 & = (A_1+B_1 K) \hat{W}_1 + \tilde{L} C_0 E_1 + \tilde{L} C_1 E_2 + \tilde{L} \zeta_1 + \tilde{L} \zeta_2 .
\end{align*}
Finally, setting
$Y = \mathrm{col}\big( \hat{W}_1 , E_1 , \hat{W}_2 , E_2 \big) $,
we have
\begin{equation*}
\dot{Y} = F Y + \mathcal{L} \zeta_1 + \mathcal{L} \zeta_2
\end{equation*}
where 
\begin{subequations}\label{eq: def matrix F}
\begin{align*}
& F = \begin{pmatrix}
A_1 + B_1 K & \tilde{L} C_0 & 0 & \tilde{L} C_1 \\
0 & \!\! A_0 - L C_0 & 0 & - L C_1 \\
B_{a,1} \big( 1 \ 0 \ \cdots \ 0 \big) \!+\! B_{b,1} K & 0 & A_2 & 0 \\
0 & 0 & 0 & A_2
\end{pmatrix}  \\
& \mathcal{L} = \begin{pmatrix}
\tilde{L}^\top & - L^\top & 0 & 0
\end{pmatrix}^\top .
\end{align*}
\end{subequations}
Let us now define a suitable Lyapunov functional. Let $P$ be a Hermitian positive definite matrix (to be chosen later). 

For the parabolic part of the system evaluated in $L^2$ norm (i.e., for the state of the system evaluated in $\mathcal{H}^0$ norm), we define
\begin{equation}\label{eq: V L2 norm}
V(Y,w) = \bar{Y}^\top P Y + \sum_{n \geq N+1}\!  \vert w_{1,n} \vert^2 + \sum_{\vert m \vert \geq M+1}\!  \vert w_{2,m} \vert^2 .
\end{equation}
Since $\Phi$ is a Riesz basis of $\mathcal{H}^0$ (see Lemma~\ref{lem: A riesz spectral}), of dual basis $\Psi$ (see Lemma~\ref{lem: dual basis}), it follows that $\sqrt{V}$ is a norm, equivalent to the norm of $\mathbb{C}^{2N+1}\times\mathcal{H}^0$. 

For the parabolic part of the system evaluated in $H^1$ norm (i.e., for the state of the system evaluated in $\mathcal{H}^1$ norm), we define
\begin{equation}\label{eq: V H1 norm}
V(Y,w) = \bar{Y}^\top P Y + \sum_{n \geq N+1} n^2 \vert w_{1,n} \vert^2 + \sum_{\vert m \vert \geq M+1} \vert w_{2,m} \vert^2 .
\end{equation}
Since $\Phi^1$ is a Riesz basis of $\mathcal{H}^1$ (see Lemma~\ref{lemma: Riesz basis H1}), $\sqrt{V}$ is a norm, equivalent to the norm of $\mathbb{C}^{2N+1}\times\mathcal{H}^1$. Indeed, using \eqref{eq: projection system trajectory into Riesz basis}, we have
\begin{align*}
Y(t,\cdot) & = \big(w^1(t,\cdot),w^2(t,\cdot),w^3(t,\cdot)\big) \nonumber \\
& = \sum_{n\in\mathbb{N}^*} w_{1,n}(t) \phi_{1,n} + \sum_{m \in\mathbb{Z}} w_{2,m}(t) \phi_{2,m} \\
& = \sum_{n\in\mathbb{N}^*} \dfrac{n \pi}{L} w_{1,n}(t) \dfrac{L}{n \pi} \phi_{1,n} + \sum_{m \in\mathbb{Z}} w_{2,m}(t) \phi_{2,m} .
\end{align*}
The above two series converge \emph{a priori} in $\mathcal{H}^0$ norm. However, for a classical solution $W(t,\cdot)\in D(\mathcal{A})$, using Lemma~\ref{lemma: Riesz basis H1}, the series converge in $\mathcal{H}^1$ norm. Hence, thanks to Lemma~\ref{lemma: Riesz basis H1}, $\Vert W(t,\cdot) \Vert_{\mathcal{H}^1}^2$ is equivalent to $\sum_{n\in\mathbb{N}^*} n^2 \vert w_{1,n}(t) \vert^2 + \sum_{m \in\mathbb{Z}} \vert w_{2,m}(t) \vert^2$. This justifies the definition of \eqref{eq: V H1 norm}.

Since the proofs in $\mathcal{H}^0$ norm and in $\mathcal{H}^1$ norm are now similar, we focus on the second case. Setting $\tilde{Y} = \mathrm{col}(Y,\zeta_1,\zeta_2)$, the computation of the time derivative of $V$ along the system trajectories gives
\begin{align*}
& \dot{V} = \bar{\tilde{Y}}^\top 
\begin{pmatrix} 
\bar{F}^\top P + P F & P \mathcal{L} & P \mathcal{L} \\
\mathcal{L}^\top P & 0 & 0 \\
\mathcal{L}^\top P & 0 & 0
\end{pmatrix} 
\tilde{Y} \\
& \phantom{=}\, + 2 \sum_{n \geq N+1} n^2 \operatorname{Re} \big( \big( \lambda_{1,n} w_{1,n} + a_{1,n} v + b_{1,n} v_d \big) \overline{w_{1,n}} \big) \\
& \phantom{=}\, + 2 \sum_{\vert m \vert \geq M+1} \operatorname{Re} \big( \big( \lambda_{2,m} w_{2,m} + a_{2,m} v + b_{2,m} v_d \big) \overline{w_{2,m}} \big) \\
& \leq \bar{\tilde{Y}}^\top  
\begin{pmatrix} 
\bar{F}^\top P + P F & P \mathcal{L} & P \mathcal{L} \\
\mathcal{L}^\top P & 0 & 0 \\
\mathcal{L}^\top P & 0 & 0
\end{pmatrix} 
\tilde{Y} \\
& \phantom{\leq}\, + 2 \sum_{n \geq N+1} n^2 \lambda_{1,n} \vert w_{1,n} \vert^2
+ 2 \sum_{\vert m \vert \geq M+1} \underbrace{\operatorname{Re}\lambda_{2,m}}_{= \rho} \vert w_{2,m} \vert^2 \\
& \phantom{\leq}\, + \tfrac{2}{\epsilon} \bigg( \sum_{n \geq N+1} n^4 \vert w_{1,n} \vert^2 + \sum_{\vert m \vert \geq M+1} \vert w_{2,m} \vert^2 \bigg) \\
& \phantom{\leq}\, + \epsilon \bigg( \sum_{n \geq N+1} \vert a_{1,n} \vert^2 + \sum_{\vert m \vert \geq M+1} \vert a_{2,m} \vert^2 \bigg) \vert v \vert^2 \\
& \phantom{\leq}\, + \epsilon \bigg( \sum_{n \geq N+1} \vert b_{1,n} \vert^2 + \sum_{\vert m \vert \geq M+1} \vert b_{2,m} \vert^2 \bigg) \vert v_d \vert^2 
\end{align*}
with $\epsilon > 0$ arbitrary and $\rho = \tfrac{1}{2L} \log\big(\tfrac{\alpha-1}{\alpha+1}\big) = \operatorname{Re}\lambda_{2,m}$. The latter inequality has been obtained by using four times Young's inequality $ab\leq\tfrac{a^2}{2\epsilon}+\tfrac{\epsilon b^2}{2}$. Defining now $E = \begin{pmatrix} 1 & 0 & \ldots & 0 \end{pmatrix}$ and $\tilde{K} = \begin{pmatrix} K & 0 & 0 & 0 \end{pmatrix}$, we infer that 
$v = E Y$ and $v_d = \tilde{K} Y $.
Furthermore, using the Cauchy-Schwarz inequality,
\begin{align*}
\zeta_1^2 & = \bigg( \sum_{k\geq N+1} c_{1,k} w_{1,k} \bigg)^2 \leq \underbrace{\sum_{k\geq N+1} \vert c_{1,k} \vert^2}_{= S_{c,1,N}} \times \sum_{k\geq N+1} \vert w_{1,k} \vert^2 , \\
\zeta_2^2 & = \bigg( \sum_{\vert l \vert\geq M+1} c_{2,l} w_{2,l} \bigg)^2 \leq \underbrace{\sum_{\vert l \vert\geq M+1} \vert c_{2,l} \vert^2}_{= S_{c,2,M}} \times \sum_{\vert l \vert\geq M+1} \vert w_{2,l} \vert^2 .
\end{align*}
Defining $S_{a,N,M} = \sum_{n \geq N+1} \vert a_{1,n} \vert^2 + \sum_{\vert m \vert \geq M+1} \vert a_{2,m} \vert^2$ and $S_{b,N,M} = \sum_{n \geq N+1} \vert b_{1,n} \vert^2 + \sum_{\vert m \vert \geq M+1} \vert b_{2,m} \vert^2$, the combination of all above estimates gives 
\begin{equation}\label{eq: dotV}
\dot{V} + 2 \delta V \leq \bar{\tilde{Y}}^\top \Theta \tilde{Y} 
+ \sum_{n \geq N+1} n^2 \Gamma_{1,n} \vert w_{1,n} \vert^2  + \Gamma_{2,M} \sum_{\vert m \vert \geq M+1} \vert w_{2,m} \vert^2 
\end{equation}
with
\begin{subequations}
\begin{align}
\Theta & =
\begin{pmatrix} 
\Theta_{1,1} & P \mathcal{L} & P \mathcal{L} \\
\mathcal{L}^\top P & -\eta_1 & 0 \\
\mathcal{L}^\top P & 0 & -\eta_2
\end{pmatrix} , \label{eq: stab condition - Theta} \\
\Gamma_{1,n} & = 2 \Big( \lambda_{1,n} + \tfrac{n^2}{\epsilon} + \delta \Big) + \tfrac{\eta_1 S_{c,1,N}}{n^2} , \\
\Gamma_{2,M} & = 2 \Big( \rho + \tfrac{1}{\epsilon} + \delta \Big) + \eta_2 S_{c,2,M} ,  \label{eq: stab condition - Gamma2m}
\end{align}
\end{subequations}
where $\Theta_{1,1} = \bar{F}^\top P + P F + 2\delta P + \epsilon S_{a,N,M} E^\top E + \epsilon S_{b,N,M} \tilde{K}^\top \tilde{K}$ and $\eta_1,\eta_2 > 0$ arbitrary. Choosing $\epsilon > L^2/\pi^2$, we have $\Gamma_{1,n} = 2 \big( - \big( \tfrac{\pi^2}{L^2} - \tfrac{1}{\epsilon} \big) n^2 + c + \delta \big) + \tfrac{\eta_1 S_{c,1,N}}{n^2} \leq \Gamma_{1,N+1}$ for any $n \geq N+1$. Hence, $\dot{V} + 2 \delta V \leq 0$ (yielding the claimed stability estimate \eqref{eq: exponential stability}) provided there exist integers $N \geq N_0 + 1$ and $M \in\mathbb{N}$, real numbers $\epsilon > L^2/\pi^2$ and $\eta_1,\eta_2 > 0$, and a Hermitian matrix $P \succ 0$ such that
\begin{equation}\label{constraints_NM}
\Theta \preceq 0 , \quad \Gamma_{1,N+1} \leq 0 , \quad \Gamma_{2,M} \leq 0 .
\end{equation}
To conclude the proof, it remains to prove that the constraints \eqref{constraints_NM} are always feasible for $N,M$ chosen large enough. Recalling that the matrix $F$ is defined by \eqref{eq: def matrix F}, it is easy to see that $F$ is Hurwitz with eigenvalues of real part less than $-\delta < 0$. Moreover, since $\Vert C_1 \Vert = O(1)$ as $N,M \to \infty$, the application of \cite[Appendix]{lhachemi2020finite} shows that the solution $P \succ 0$ of the Lyapunov equation $\bar{F}^\top P + PF + 2\delta P = -I$ is such that $\Vert P \Vert = O(1)$ as $N,M \rightarrow + \infty$. Furthermore, $\Vert E \Vert$, $\Vert \mathcal{L} \Vert$, and $\Vert \tilde{K} \Vert$ are constants, not depending on $N,M$ while $S_{a,N,M},S_{b,N,M},S_{c,1,N},S_{c,2,M} \rightarrow 0$ as $N,M \rightarrow + \infty$. Recalling that, by assumption, $\rho < - \delta$, we take $\epsilon > L^2/\pi^2$ large enough so that $\rho + \tfrac{1}{\epsilon} + \delta < 0$. We finally take
\begin{equation*}
\eta_1 = \left\{\begin{array}{cl}
\tfrac{1}{\sqrt{S_{c,1,N}}} & \mathrm{if}\; S_{c,1,N} \neq 0, \\
N & \textrm{otherwise},
\end{array}\right.
\end{equation*}
\begin{equation*}
\eta_2 = \left\{\begin{array}{cl}
\tfrac{1}{\sqrt{S_{c,2,M}}} & \mathrm{if}\; S_{c,2,M} \neq 0, \\
M & \textrm{otherwise},
\end{array}\right.
\end{equation*}
which, in particular, implies that $\eta_1,\eta_2 \rightarrow +\infty$ while $\eta_1 S_{c,1,N} , \eta_2 S_{c,2,M} \rightarrow 0$ as $N,M \rightarrow +\infty$. With these choices, it can be seen that $\Gamma_{1,N+1} \rightarrow -\infty$ as $N \rightarrow +\infty$ and $\Gamma_{2,M} \rightarrow 2 \big( \rho + \tfrac{1}{\epsilon} + \delta \big) < 0$ as $M \rightarrow +\infty$. Hence $\Gamma_{1,N+1} \leq 0$ and $\Gamma_{2,M} \leq 0$ for all $N,M$ large enough. Finally, the Schur complement theorem applied to
\begin{equation*}
\Theta =
\begin{pmatrix} 
-I + \epsilon S_{a,N,M} \bar{E}^\top E + \epsilon S_{b,N,M} \tilde{K}^\top \tilde{K} & P \mathcal{L} & P \mathcal{L} \\
\mathcal{L}^\top P & -\eta_1 & 0 \\
\mathcal{L}^\top P & 0 & -\eta_2
\end{pmatrix} .
\end{equation*}
shows that $\Theta \preceq 0$ if and only if 
\begin{equation*}
-I + \epsilon S_{a,N,M} E^\top E + \epsilon S_{b,N,M} \tilde{K}^\top \tilde{K} 
+ \begin{pmatrix} P \mathcal{L} & P \mathcal{L} \end{pmatrix}
\begin{pmatrix} \tfrac{1}{\eta_1} & 0 \\
0 & \tfrac{1}{\eta_2} \end{pmatrix} 
\begin{pmatrix} \mathcal{L}^\top P \\ \mathcal{L}^\top P \end{pmatrix}
\preceq 0 .
\end{equation*}
We note that 
\begin{equation*}
-I + \epsilon S_{a,N,M} E^\top E + \epsilon S_{b,N,M} \tilde{K}^\top \tilde{K}
\preceq - \big( 1 - \epsilon \big( S_{a,N,M} \Vert E \Vert^2 +  S_{b,N,M} \Vert \tilde{K} \Vert^2 \big) \big) I 
\preceq - \tfrac{1}{2} I
\end{equation*}
for $N,M$ taken large enough, because $\epsilon ( S_{a,N,M} \Vert E \Vert^2 +  S_{b,N,M} \Vert \tilde{K} \Vert^2 ) \rightarrow 0$ as $N,M \rightarrow + \infty$. In this case,
\begin{align*}
& -I + \epsilon S_{a,N,M} E^\top E + \epsilon S_{b,N,M} \tilde{K}^\top \tilde{K}
+ \begin{pmatrix} P \mathcal{L} & P \mathcal{L} \end{pmatrix}
\begin{pmatrix} \tfrac{1}{\eta_1} & 0 \\
0 & \tfrac{1}{\eta_2} \end{pmatrix} 
\begin{pmatrix} \mathcal{L}^\top P \\ \mathcal{L}^\top P \end{pmatrix} \\
& \hspace{8cm} \preceq
- \tfrac{1}{2} I + \big( \tfrac{1}{\eta_1} + \tfrac{1}{\eta_2} \big) P \mathcal{L} \mathcal{L}^\top P .
\end{align*}
Recalling that $\eta_1,\eta_2 \rightarrow +\infty$ and $\Vert P \Vert = O(1)$ as $N,M \rightarrow +\infty$ while $\Vert \mathcal{L} \Vert$ is a constant, not depending on $N,M$, the latter matrix is $\preceq 0$ for $N,M$ chosen large enough. This completes the proof.
\end{proof}

\section{Extension to a pointwise measurement for the heat equation}\label{sec: extemsion}
\noindent
In this section, instead of the distributed measurement \eqref{eq: measurement heat distributed} of the reaction-diffusion PDE, we consider the case of pointwise measurement described either by
\begin{equation}\label{eq: Dirichlet measurement}
	y_o(t) = y(t,\xi_{p})
\end{equation}
or by
\begin{equation}\label{eq: Neumann measurement}
	y_o(t) = \partial_x y(t,\xi_{p})
\end{equation}
for a fixed $\xi_{p} \in [0,L]$. We infer from \eqref{eq: change of variable} and \eqref{eq: projection system trajectory into Riesz basis} that either
\begin{align*}
	y_o(t) & = w(t,\xi_{p}) \\
	& = \sum_{n\in\mathbb{N}^*} w_{1,n}(t) \underbrace{\phi_{1,n}^1(\xi_{p})}_{=c_{1,n}} + \sum_{m \in\mathbb{Z}} w_{2,m}(t) \underbrace{\phi_{2,m}^1(\xi_{p})}_{=c_{2,m}}
\end{align*}
or
\begin{align*}
	y_o(t) & = \partial_x w(t,\xi_{p}) \\
	& = \sum_{n\in\mathbb{N}^*} w_{1,n}(t) \underbrace{(\phi_{1,n}^1)'(\xi_{p})}_{=c_{1,n}} + \sum_{m \in\mathbb{Z}} w_{2,m}(t) \underbrace{(\phi_{2,m}^1)'(\xi_{p})}_{=c_{2,m}}. 
\end{align*}

\begin{theorem}\label{thm4}
Considering the pointwise measurement described either by \eqref{eq: Dirichlet measurement} or \eqref{eq: Neumann measurement} for some $\xi_p \in [0,L]$, the same statement as Theorem~\ref{thm: main result 2} holds true for the Hilbert space $H = \mathcal{H}^1$ defined by \eqref{eq: space H1}. 
\end{theorem}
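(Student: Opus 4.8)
The strategy is to re-run the proof of Theorem~\ref{thm: main result 2} with the degree of freedom $\kappa$ that was already built into it: there only the value $\kappa=0$ was used, precisely because a distributed profile $c_o\in L^2$ yields square-summable output coefficients, a property that fails for the two pointwise measurements. First I would record the new coefficients. From \eqref{eq: def c_{i,k}} and Lemma~\ref{lem: eigenstructures of A}, the Dirichlet measurement \eqref{eq: Dirichlet measurement} gives $c_{1,n}=\phi^1_{1,n}(\xi_p)=\sqrt{2/L}\sin(n\pi\xi_p/L)$ and $c_{2,m}=\phi^1_{2,m}(\xi_p)$, while the Neumann measurement \eqref{eq: Neumann measurement} gives $c_{1,n}=(\phi^1_{1,n})'(\xi_p)=\sqrt{2/L}\,(n\pi/L)\cos(n\pi\xi_p/L)$ and $c_{2,m}=(\phi^1_{2,m})'(\xi_p)$. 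Hence $|c_{1,n}|=O(1)$ in the first case and $|c_{1,n}|=O(n)$ in the second, so $(c_{1,n}/n^\kappa)_{n}\in\ell^2$ as soon as $\kappa\in(1/2,2)$, resp. $\kappa\in(3/2,2)$; fix such a $\kappa$. Moreover the bounds $\|\phi^1_{2,m}\|_{L^\infty}=O(1/m^2)$ and $\|(\phi^1_{2,m})'\|_{L^\infty}=O(1/|m|^{3/2})$ established in the proofs of Lemma~\ref{lem: A riesz spectral} and Lemma~\ref{lemma: Riesz basis H1} give $(c_{2,m})_m\in\ell^2$, hence $S_{c,2,M}\to0$ and $\|C_1\|=O(1)$ as $N,M\to\infty$, in both cases. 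The controllability assumption and the data $A_1,B_1,K$ are unchanged (they do not involve the measurement), and the observability assumption ``$c_{1,n}\neq0$ for $n\in\{1,\dots,N_0\}$'' — which here reads $\sin(n\pi\xi_p/L)\neq0$, resp. $\cos(n\pi\xi_p/L)\neq0$ — is again used only through Lemma~\ref{lem: obs for cont}, so that a suitable observer gain $L$ exists.

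With these remarks, the entire construction of Section~\ref{sec: control design} and the Lyapunov computation \eqref{eq: V H1 norm}--\eqref{constraints_NM} go through verbatim, with the single exception of the Cauchy--Schwarz estimate of the tail term $\zeta_1=\sum_{k\geq N+1}c_{1,k}w_{1,k}$, which in the proof of Theorem~\ref{thm: main result 2} used $S_{c,1,N}=\sum_{k\geq N+1}|c_{1,k}|^2<\infty$. Here this series diverges, so I would instead use the weighted estimate $|\zeta_1|^2\leq \widetilde{S}_{c,1,N}\sum_{k\geq N+1}k^{2\kappa}|w_{1,k}|^2$ with $\widetilde{S}_{c,1,N}=\sum_{k\geq N+1}|c_{1,k}|^2/k^{2\kappa}$, which is finite and tends to $0$ as $N\to\infty$ by the choice of $\kappa$. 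Propagating this through the argument only replaces, in $\Gamma_{1,n}$, the term $\eta_1 S_{c,1,N}/n^2$ by $\eta_1\widetilde{S}_{c,1,N}\,n^{2\kappa-2}$. Since $\kappa<2$ we have $n^{2\kappa-2}=o(n^2)$, so for $\epsilon>L^2/\pi^2$ the sequence $\Gamma_{1,n}$ is still eventually non-increasing in $n$ with $\Gamma_{1,n}\to-\infty$, and the choice $\eta_1=\widetilde{S}_{c,1,N}^{-1/2}$ (or $\eta_1=N$ when $\widetilde{S}_{c,1,N}=0$) still gives $\eta_1\to+\infty$ while $\eta_1\widetilde{S}_{c,1,N}\to0$. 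Since $\|C_1\|$, $\|\mathcal{L}\|$, $\|E\|$, $\|\tilde{K}\|$, $S_{a,N,M}$, $S_{b,N,M}$, $S_{c,2,M}$ and hence the solution $P$ of $\bar{F}^\top P+PF+2\delta P=-I$ behave exactly as in Theorem~\ref{thm: main result 2}, the Schur-complement feasibility analysis of $\Theta\preceq0$, $\Gamma_{1,N+1}\leq0$, $\Gamma_{2,M}\leq0$ for $N,M$ large is unchanged, yielding $\dot V+2\delta V\leq0$ and thus \eqref{eq: exponential stability} with $H=\mathcal{H}^1$, first for classical solutions and then, by density, for all data in $\mathcal{H}^1\times\mathbb{C}^{2N+1}$.

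The only genuinely new point, and the one I expect to require the most care, is the well-posedness of the closed loop on $\mathcal{H}^1\times\mathbb{C}^{2N+1}$ needed for that density step. For the Dirichlet measurement $y\mapsto y(\xi_p)$ is a bounded functional on $H^1_{(0)}(0,L)$, hence the closed-loop generator is a bounded perturbation of the (block-diagonal) closed-loop generator of Theorem~\ref{thm: main result 2} and generates a $C_0$-semigroup, so the extension is immediate. For the Neumann measurement $y\mapsto\partial_x y(\xi_p)$ is unbounded on $H^1$, but the reaction-diffusion component of $\mathcal{A}$ generates an analytic semigroup, so along any trajectory with data in $\mathcal{H}^1$ the map $t\mapsto\partial_x y(t,\xi_p)$ is continuous on $(0,\infty)$ with an at most $t^{-\theta}$ ($\theta<1$) singularity at $t=0$, hence locally integrable; the finite-dimensional observer driven by $y_o$ is therefore well-posed, the closed-loop solution map is continuous on $\mathcal{H}^1\times\mathbb{C}^{2N+1}$, and one may equivalently establish \eqref{eq: exponential stability} on the invariant dense subspace $D(\mathcal{A})\times\mathbb{C}^{2N+1}$ and then pass to the limit. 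Finally, since all four diagonal blocks of the matrix $F$ in \eqref{eq: def matrix F} are left untouched, $F$ is still Hurwitz with spectral abscissa $<-\delta$, which is what guarantees the uniform bound $\|P\|=O(1)$ invoked above.
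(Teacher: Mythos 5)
Your proposal is correct and follows essentially the same route as the paper: re-run the proof of Theorem~\ref{thm: main result 2} with the $\mathcal{H}^1$ Lyapunov functional \eqref{eq: V H1 norm}, replace the Cauchy--Schwarz estimate on $\zeta_1$ by a weighted one compatible with the growth $|c_{1,n}|=O(1)$ (Dirichlet) or $O(n)$ (Neumann), use the bounds $\Vert\phi^1_{2,m}\Vert_{L^\infty}=O(1/m^2)$ and $\Vert(\phi^1_{2,m})'\Vert_{L^\infty}=O(1/|m|^{3/2})$ for $\zeta_2$, and absorb the resulting $o(n^2)$ correction in $\Gamma_{1,n}$ into the dominant $-n^2$ term; your admissible ranges $\kappa\in(1/2,2)$ and $\kappa\in(3/2,2)$ contain exactly the paper's choices $\kappa=1$ and $\kappa=7/4$. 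The extra paragraph on closed-loop well-posedness (boundedness of point evaluation on $H^1$, parabolic smoothing for the flux measurement) goes beyond what the paper records but is consistent with it and does not change the argument.
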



\begin{proof}
The assumption that $c_{1,n} \neq 0$ for any $n\in\{1,\ldots,N_0\}$ implies the controllability of the pair $(A_0,C_0)$. Now, the proof follows the one of Theorem~\ref{thm: main result 2} by using the Lyapunov functional \eqref{eq: V H1 norm}. The only differences lie in the estimates of $\zeta_1 = \sum_{n\geq N+1} c_{1,n} w_{1,n}$ and of $\zeta_2 = \sum_{\vert m \vert \geq M+1} c_{2,m}^1(\xi_{p}) w_{2,m}$, as well as in the selection of $\kappa\in[0,2)$ that appears in the definition of the matric $C_1$ and which must be selected to ensure that $\Vert C_1 \Vert = O(1)$ as $N \rightarrow +\infty$ for applying the Lemma in the appendix of \cite{lhachemi2020finite}. 
	
In the case of the pointwise measurement \eqref{eq: Dirichlet measurement}, by Lemma~\ref{lem: eigenstructures of A}, we have $\phi_{1,n}^1(\xi_{p}) = O(1)$, hence 
\begin{equation*}
\zeta_1^2 \leq \underbrace{\sum_{n\geq N+1} \tfrac{\vert \phi_{1,n}^1(\xi_{p}) \vert^2}{n^2}}_{= S_{c,1,N} < +\infty} \times \sum_{n\geq N+1} n^2 \vert w_{1,n} \vert^2 .
\end{equation*}
Moreover, we showed in the proof of Lemma~\ref{lem: A riesz spectral} that $\Vert \phi_{2,m}^1 \Vert_{L^\infty} = O(1/m^2)$, yielding
\begin{equation*}
\zeta_2^2 \leq \underbrace{\sum_{\vert m \vert\geq M+1} \vert \phi_{2,m}^1(\xi_p) \vert^2}_{= S_{c,2,M} < +\infty} \times \sum_{\vert m \vert\geq M+1} \vert w_{2,m} \vert^2 .
\end{equation*}
Then, proceeding as in the proof of Theorem~\ref{thm: main result 2}, we obtain \eqref{eq: dotV} with 
$\Gamma_{1,n} = 2 \big( \lambda_{1,n} + \tfrac{n^2}{\epsilon} + \delta \big) + \eta_1 S_{c,1,N}$,
while $\Theta$ and $\Gamma_{2,M}$ are defined by \eqref{eq: stab condition - Theta} and \eqref{eq: stab condition - Gamma2m}, respectively. The above discussion also shows that selecting $\kappa = 1$ ensures that $\Vert C_1 \Vert = O(1)$ as $N,M \rightarrow +\infty$. The proof is then similar to the one of Theorem~\ref{thm: main result 2}.

In the case of the pointwise measurement \eqref{eq: Neumann measurement}, by Lemma~\ref{lem: eigenstructures of A}, we have $(\phi_{1,n}^1)'(\xi_{p}) = O(n)$, hence 
\begin{equation*}
\zeta_1^2 \leq \underbrace{\sum_{n\geq N+1} \tfrac{\vert (\phi_{1,n}^1)'(\xi_{p}) \vert^2}{n^{7/2}}}_{= S_{c,1,N} < \infty} \times \sum_{n\geq N+1} n^{7/2} \vert w_{1,n} \vert^2 .
\end{equation*}
Moreover, following the proof in Lemma~\ref{lem: A riesz spectral} that $\Vert (\phi_{2,m}^1)' \Vert_{L^\infty} = O(1/\vert m \vert^{3/2})$, we have
\begin{equation*}
\zeta_2^2 \leq \underbrace{\sum_{\vert m \vert\geq M+1} \vert \phi_{2,m}^1(\xi_p) \vert^2}_{= S_{c,2,M} < \infty} \times \sum_{\vert m \vert\geq M+1} \vert w_{2,m} \vert^2 .
\end{equation*}
Then, proceeding as in the proof of Theorem~\ref{thm: main result 2}, we obtain \eqref{eq: dotV} with 
$\Gamma_{1,n} = 2 \Big( \lambda_{1,n} + \tfrac{n^2}{\epsilon} + \delta \Big) + n^{3/2} \eta_1 S_{c,1,N}$,
while $\Theta$ and $\Gamma_{2,M}$ are defined by \eqref{eq: stab condition - Theta} and \eqref{eq: stab condition - Gamma2m}, respectively. Selecting $\kappa=7/4$, the above discussion also implies that $\Vert C_1 \Vert = O(1)$ as $N,M \rightarrow +\infty$. The proof is then similar to the one of Theorem~\ref{thm: main result 2}.
\end{proof}

\section{Numerical illustration}\label{sec: numerical}

We consider the wave-heat cascade system \eqref{eq: cascade equation} with $L=1$, $c = 10$, and $\beta(x)=1+x^2$. The considered output is the pointwise measurement \eqref{eq: Dirichlet measurement} localized at $\xi_p = \sqrt{3}/2$. In this setting the first element of the parabolic spectrum is $\lambda_{1,1} = 10 - \pi^2 > 0$, indicating that the heat subsystem is open-loop unstable. In order to achieve the targeted closed-loop decay rate $\delta = 1$, we set for the preliminary feedback \eqref{eq: preliminary feedback} the constant $\alpha = 1.1$, ensuring that $\mathrm{Re}\,\lambda_{2,m} < - \delta$. We next build the finite dimensional model \eqref{truncature:eq} of dimension $N_0 = 1$ that captures the eigenvalue $\lambda_{1,1}$. It is checked that $\gamma_1 \neq 0$, which enables based on Lemma~\ref{kalman:contro:lemma} the computation of a feedback matrix $K_1$ so that $A_1-B_1 K$ has its eigenvalues located at $\{-2,-3\}$. Moreover, it is checked that $c_{1,1} \neq 0$, which enables us to compute the observer gain $L$ so that $A_0 - L C_0$ has its eigenvalue located at $-4$. Writing the constraints \eqref{constraints_NM} from Theorem~\ref{thm4} into linear matrix inequalities of the decision variables $\epsilon,\eta_1,\eta_2$, it is found that the closed-loop system is exponentially stable with $\delta = 1$ for $N = 2$ and $M = 8$.

For numerical simulation, we consider the initial condition $y(0,x) = x(L-x)$ for the heat equation while $z(0,x)=\partial_t z(0,x)=0$ for the wave equation. The simulation is performed by considering the 8 first modes of the parabolic spectrum and the $2 \times 20 + 1 = 41$ first modes of the hyperbolic spectrum. The obtained results are depicted in Fig.~\ref{fig: sim}. In accordance with the theoretical predictions of Theorem~\ref{thm4}, we observe the decay of the system trajectory to zero.

\begin{figure}
\centering
\subfigure[State of the PDE $y(t,\cdot)$]{
\includegraphics[width=3.5in]{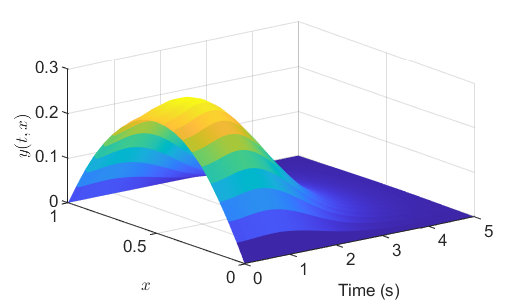}
\label{fig: sim - PDE y}
}
\subfigure[State of the PDE $z(t,\cdot)$]{
\includegraphics[width=3.5in]{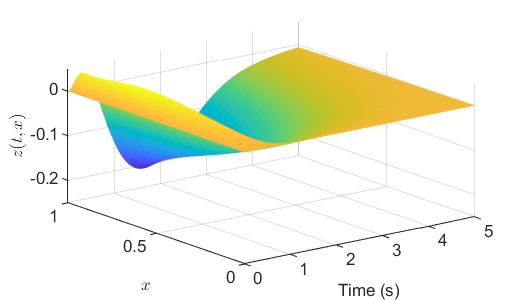}
\label{fig: sim - PDE z}
}
\subfigure[State of the observer]{
\includegraphics[width=3.5in]{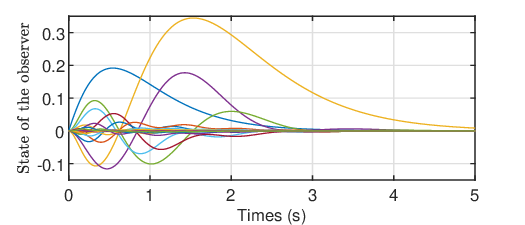}
\label{fig: sim - obs}
}
\caption{Trajectory of the wave-heat cascade \eqref{eq: cascade equation} in closed loop}
\label{fig: sim}
\end{figure}

\section{Conclusion}\label{sec: Conclusion}
\noindent
We have studied the stabilization problem for the wave-reaction-diffusion cascade system \eqref{eq: cascade equation}, where the solution of the wave equation appears as a source term in the heat equation. Exploiting the Riesz-spectral structure of the system, we derived explicit, necessary, and sufficient controllability and observability conditions under which we designed an explicit finite-dimensional output-feedback law ensuring arbitrary exponential decay in suitable Hilbert norms. 

A natural perspective is to consider the ``symmetric'' cascade model where the solution of the heat equation appears as a source term in the wave equation: 
\begin{subequations}\label{eq: cascade equation dual}
    \begin{align}
        &\partial_{tt} z(t,x) = \partial_{xx} z(t,x) + \beta(x) y(t,x) , \label{eq: cascade equation dual - 2} \\
        & \partial_t y(t,x) = \partial_{xx} y(t,x) + c y(t,x) , \label{eq: cascade equation dual - 1} \\
        & z(t,0) = \partial_x z(t,L) = 0 , \label{eq: cascade equation dual - 4} \\
        & y(t,0) = 0 , \quad y(t,L) = u(t) . \label{eq: cascade equation dual - 3}
    \end{align}
\end{subequations}
While the study of the exact controllability of the system seems to be achievable using the same approach, the actual design of an explicit feedback control strategy seems much more challenging because the preliminary shift of the spectrum of the wave equation achieved by \eqref{eq: preliminary feedback} cannot be applied anymore. This is left as an open question for future research. 

Throughout the article, we have assumed $c$ constant in \eqref{eq: cascade equation - 1}. Actually, it is not difficult to generalize all results of this paper to the case $c\in L^\infty(0,L)$, but this complicates the spectral analysis of the operators. For large values of $n$ and $m$, the eigenvalues and eigenfunctions keep asymptotically the expression given in Lemmas~\ref{lem: eigenstructures of A} and~\ref{lem: dual basis}, but for low frequencies their expression may differ, thus having an impact on the coefficients $\gamma_n$ defined by \eqref{def_gamma_n}. Then, more technical assumptions appear to characterize controllability.

Another question is to study multi-dimensional versions of \eqref{eq: cascade equation}. We believe this is much more challenging because the formalism of Riesz bases cannot be used.

\appendix

\subsection{Defining $\mathcal{B}$ by transposition}\label{sec_app_A}
At the beginning of Section~\ref{sec: prel control and spectral prop}, following the theory of well-posed linear systems developed in \cite{tucsnakweiss} (see also \cite{trelat_SB}), we have written the control system \eqref{eq: cascade equation - premilinary feedback} in the abstract form
$\dot{X}(t)=\mathcal{A} X(t)+\mathcal{B} v(t)$
with $\mathcal{A}:D(\mathcal{A})\subset\mathcal{H}^0\rightarrow\mathcal{H}^0$ defined by \eqref{def_A}, and the Hilbert space $\mathcal{H}^0$ is defined by \eqref{eq: state-space}.
In this appendix, we show how to define the control operator $\mathcal{B}$ by transposition. 
Recall that the adjoint operator $\mathcal{A}^*:D(\mathcal{A}^*)\rightarrow\mathcal{H}^0$ is defined by \eqref{def_A*}.
Following \cite{trelat_SB, tucsnakweiss}, identifying $\mathcal{H}^0$ with its dual, we search $\mathcal{B}\in L(\mathbb{R},D(\mathcal{A}^*)')$, or equivalently, $\mathcal{B}^*\in L(D(\mathcal{A}^*),\mathbb{R})$, where $D(\mathcal{A}^*)'$ is the dual of $D(\mathcal{A}^*)$ with respect to the pivot space $\mathcal{H}^0$.
Note that $D(\mathcal{A}^*) \subset \mathcal{H}^0\subset D(\mathcal{A}^*)'$ with continuous and dense embeddings.
Since the equality $\dot{X}=\mathcal{A} X+\mathcal{B} v$ is written in the space $D(\mathcal{A}^*)'$, using the duality bracket $\langle\ ,\ \rangle_{D(\mathcal{A}^*)',D(\mathcal{A}^*)}$, we have 
\begin{equation*}
\langle\dot{X}(t),\psi\rangle_{D(\mathcal{A}^*)',D(\mathcal{A}^*)} 
= \langle X(t),\mathcal{A}^*\psi\rangle_{\mathcal{H}^0} + v(t)\,\mathcal{B}^*\psi 
\end{equation*}
for any $\psi\in D(\mathcal{A}^*)$.
Taking a sufficiently regular solution $X(t,\cdot)=(y(t,\cdot),z(t,\cdot),\partial_tz(t,\cdot))^\top$ of \eqref{eq: cascade equation} (so that $\dot{X}(t)\in\mathcal{H}^0$), and denoting $\psi=(\psi_1,\psi_2,\psi_3)^\top$, a straightforward computation using \eqref{eq: cascade equation - premilinary feedback} and integrations by parts show that
$$
\langle\dot{X}(t),\psi\rangle_{\mathcal{H}^0} = \langle X(t),\mathcal{A}^*\psi\rangle_{\mathcal{H}^0} + v(t)\,\psi_3(L).
$$
Since $\langle\dot{X}(t),\psi\rangle_{D(\mathcal{A}^*)',D(\mathcal{A}^*)}  = \langle\dot{X}(t),\psi\rangle_{\mathcal{H}^0}$, a density argument finally leads to
$$
\mathcal{B}^*\psi = \psi_3(L)\qquad\forall\psi=(\psi_1,\psi_2,\psi_3)^\top\in D(\mathcal{A}^*).
$$
This formula defines the control operator $\mathcal{B}\in L(\mathbb{R},D(\mathcal{A}^*)')$ by transposition.
Following \cite[Section 5.1.4]{trelat_SB} or \cite[Proposition 10.9.1]{tucsnakweiss}, one could define $\mathcal{B}$ with an abstract formula, but this is not useful in this paper.

\subsection{Controllability properties of the wave-heat cascade \eqref{eq: cascade equation}}\label{sec: exact controllability}
The main objective of this paper is the explicit design of an output-feedback law that stabilizes the wave-heat cascade system \eqref{eq: cascade equation} with a prescribed exponential decay rate. Theorem~\ref{thm: main result 2} shows that the system \eqref{eq: cascade equation} is completely stabilizable in the energy space $\mathcal{H}^0$ defined by \eqref{eq: state-space}, i.e., can be stabilized at any decay rate by means of a linear feedback, provided that $\gamma_n\neq 0$ for all $n\in\mathbb{N}^*$. This raises the natural question of whether corresponding exact controllability and exact null-controllability properties hold (see \cite{trelat_SB,tucsnakweiss}).

We give below, for completeness, a controllability result for the cascade system \eqref{eq: cascade equation}. Its proof, which is given in the companion work \cite{LPT_ongoing}, relies on an observability inequality for the adjoint system, obtained by applying an Ingham-M\"untz type inequality \cite{ingham,komornik2015ingham,bhandari2024boundary} to Riesz-spectral expansions of the solutions.

\begin{theorem}\label{thm: observability inequality}
Assume that $T>2L$ and that
\begin{equation}\label{assumption_gamma_n}
\gamma_n\neq 0\qquad\forall n\in\mathbb{N}^* .
\end{equation}
(Recall that $\gamma_n$ is defined by \eqref{eq: def gamma_1 - 1} if $\tfrac{n^2 \pi^2}{L^2}\neq c$ and by \eqref{eq: def gamma_1 - 2} if $\tfrac{n^2 \pi^2}{L^2} = c$.)
Define the Hilbert space
\begin{equation*}
V =  \bigg\{  \sum_{n\in\mathbb{N}^*} a_n \phi_{1,n} + \sum_{m \in\mathbb{Z}} b_m \phi_{2,m}  \, \mid\,  a_n\in\mathbb{R},\; b_m\in\mathbb{C} , \; 
\sum_{n\in\mathbb{N}^*} \vert a_n \vert^2 \dfrac{n^4}{ \gamma_n^2}e^{\nu n^2} + \sum_{m\in\mathbb{Z}} \vert b_m \vert^2 < +\infty   \bigg\}
\end{equation*}
with $\nu = 2\tfrac{\pi^2}{L}\big(1+\tfrac{T}{L}\big)$, endowed with the norm 
\begin{equation*}
\Vert X \Vert_{V}^2 = \!\! \sum_{n\in\mathbb{N}^*} \! \vert \langle X , \psi_{1,n} \rangle \vert^2 \dfrac{n^4}{ \gamma_n^2}e^{\nu n^2}  \! + \! \sum_{m\in\mathbb{Z}} \vert \langle X , \psi_{2,m} \rangle \vert^2 .
\end{equation*}
The Hilbert space $V_0$ is defined as $V$ but with $\nu=\tfrac{2\pi^2}{L}$. Then:
\begin{enumerate}
\item\label{item_exact} The control system \eqref{eq: cascade equation} is exactly controllable in time $T$ in the space $V$, i.e., given any $X_0,X_1\in V$, there exists $u \in L^2(0,T)$ such that the solution $X=(y,z,\partial_t z)^\top$ of \eqref{eq: cascade equation} starting at $X(0)=X_0$ satisfies $X(T)=X_1$.
\item\label{item_exact_null} The control system \eqref{eq: cascade equation} is exactly null controllable in time $T$ in the space $V_0$, i.e., given any $X_0\in V_0$, there exists $u \in L^2(0,T)$ such that the solution $X=(y,z,\partial_t z)^\top$ of \eqref{eq: cascade equation} starting at $X(0)=X_0$ satisfies $X(T)=0$.
\item\label{item_approx} The control system \eqref{eq: cascade equation} is approximately controllable in time $T$ in the Hilbert space $\mathcal{H}^0$ (or in any other Hilbert space $H$ such that $H\subset \mathcal{H}^0$ or $\mathcal{H}^0\subset H$ with continuous and dense embeddings), i.e., given any $X_0,X_1\in \mathcal{H}^0$ and any $\varepsilon>0$, there exists $u \in L^2(0,T)$ such that the solution $X=(y,z,\partial_t z)^\top$ of \eqref{eq: cascade equation} starting at $X(0)=X_0$ satisfies $\Vert X(T)-X_1\Vert_{\mathcal{H}^0}\leq\varepsilon$ .
\end{enumerate}
Moreover, if one of the two following holds:
\begin{itemize}
\item $0<T<2L$;
\item $T > 0$ with $\gamma_n=0$ for some $n\in\mathbb{N}^*$,
\end{itemize}
then the control system \eqref{eq: cascade equation} is neither exactly controllable, nor exactly null controllable, nor approximately controllable in time $T$ in any Hilbert space $\mathcal{H}$ such that $\mathcal{H}\subset\mathcal{H}^0$ or $\mathcal{H}^0\subset\mathcal{H}$ with continuous and dense embeddings.
\end{theorem}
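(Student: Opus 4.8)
The plan is to use the Hilbert Uniqueness Method: I would reduce exact controllability (resp.\ exact null controllability) of \eqref{eq: cascade equation} to an observability inequality (resp.\ a final-state observability inequality) for the adjoint semigroup, deduce approximate controllability as a corollary by density, and then evaluate these inequalities on the Riesz basis of Section~\ref{sec_prelim_A}. As a preliminary reduction, note that the control $u$ in \eqref{eq: cascade equation} and the control $v$ in $\dot X=\mathcal A X+\mathcal B v$ are related by the collocated velocity feedback $u=-\alpha\,\partial_t z(t,L)+v$ of \eqref{eq: preliminary feedback}; since $X\mapsto\partial_t z(\cdot,L)$ is an admissible observation operator for the open-loop wave dynamics, this feedback is well posed in the sense of \cite{tucsnakweiss} and does not alter the reachable set. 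It therefore suffices to study the controllability of the pair $(\mathcal A,\mathcal B)$, for which the eigenstructure of $\mathcal A$ and $\mathcal A^*$ (Lemmas~\ref{lem: eigenstructures of A} and~\ref{lem: dual basis}), the Riesz bases $\Phi,\Psi$, and the control operator $\mathcal B^*\psi=\psi_3(L)$ (Appendix~\ref{sec_app_A}) are explicit.

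By duality (see \cite{trelat_SB,tucsnakweiss}), exact controllability in time $T$ in $V$ is equivalent to $\|\varphi_0\|_{V'}^2\leq c_T\int_0^T|\mathcal B^*e^{t\mathcal A^*}\varphi_0|^2\,\mathrm{d}t$ for all $\varphi_0$, where $V'$ is the dual of $V$ with respect to the pivot $\mathcal H^0$, and exact null controllability in $V_0$ to the same right-hand side bounding $\|e^{T\mathcal A^*}\varphi_0\|_{V_0'}^2$ from below; approximate controllability in $\mathcal H^0$ (and in the other admitted Hilbert spaces $H$) then follows from the first item, because the reachable set is dense in $V$ and $V$ is dense in each such $H$. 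Expanding $\varphi_0=\sum_n p_n\psi_{1,n}+\sum_m q_m\psi_{2,m}$ in the dual basis $\Psi$ (eigenvectors of $\mathcal A^*$), the observation becomes the exponential sum $\mathcal B^*e^{t\mathcal A^*}\varphi_0=\sum_n p_n\psi^3_{1,n}(L)e^{\lambda_{1,n}t}+\sum_m q_m\psi^3_{2,m}(L)e^{\bar\lambda_{2,m}t}$, and from Lemma~\ref{lem: dual basis} one reads $\psi^3_{1,n}(L)=\sqrt{2/L}\,\gamma_n/\big(\lambda_{1,n}(\cosh(\lambda_{1,n}L)+\alpha\sinh(\lambda_{1,n}L))\big)$ when $\lambda_{1,n}\neq0$ (and $\psi^3_{1,n}(L)=\gamma_n\sqrt{2/L}$ when $\lambda_{1,n}=0$): this is nonzero precisely because $\gamma_n\neq0$, while \eqref{eq: condition for selecting alpha} guarantees $\cosh(\lambda_{1,n}L)+\alpha\sinh(\lambda_{1,n}L)\neq0$, and since $\lambda_{1,n}\to-\infty$ one gets $|\psi^3_{1,n}(L)|^2\asymp\gamma_n^2\,n^{-4}e^{-2\pi^2n^2/L}$; similarly $\psi^3_{2,m}(L)=-A_m\sinh(\bar\lambda_{2,m}L)/(L\bar\lambda_{2,m})$, which using $e^{2\bar\lambda_{2,m}L}=\tfrac{\alpha-1}{\alpha+1}$ and $A_m\asymp|m|\asymp|\bar\lambda_{2,m}|$ stays bounded above and below by positive constants and never vanishes. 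A direct computation of $\|\varphi_0\|_{V'}^2=\sum_n|p_n|^2\tfrac{\gamma_n^2}{n^4}e^{-\nu n^2}+\sum_m|q_m|^2$ and of $\|e^{T\mathcal A^*}\varphi_0\|_{V_0'}^2$, combined with the above asymptotics and the identities $\nu=\tfrac{2\pi^2}{L}(1+\tfrac TL)$, $\nu_0=\tfrac{2\pi^2}{L}$, then shows that — setting $\alpha_n=p_n\psi^3_{1,n}(L)$ and $\beta_m=q_m\psi^3_{2,m}(L)$ — both observability estimates reduce to one and the same inequality,
\begin{equation*}
\int_0^T\Big|\sum_n\alpha_ne^{\lambda_{1,n}t}+\sum_m\beta_me^{\bar\lambda_{2,m}t}\Big|^2\,\mathrm{d}t\ \geq\ c_T\Big(\sum_n|\alpha_n|^2e^{2T\lambda_{1,n}}+\sum_m|\beta_m|^2\Big) .
\end{equation*}

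The heart of the matter, which is where I expect the main obstacle to lie (and the reason it is developed separately in the companion work \cite{LPT_ongoing}), is this combined Ingham--M\"untz inequality. The hyperbolic exponents $\bar\lambda_{2,m}=\rho-im\pi/L$ share a common real part and have imaginary parts spaced by $\pi/L$, so for $T>2L$ — exactly Ingham's gap condition $T>2\pi/(\pi/L)$ — the wave block alone obeys the two-sided estimate $\int_0^T|\sum_m\beta_me^{\bar\lambda_{2,m}t}|^2\,\mathrm{d}t\asymp\sum_m|\beta_m|^2$. The parabolic exponents $\lambda_{1,n}=c-n^2\pi^2/L^2$ are real and lacunary, and the heat block alone obeys the sharp M\"untz/Fattorini--Russell-type lower bound with weight $e^{2T\lambda_{1,n}}$. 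The difficulty is to combine the two families without interference: since $\lambda_{1,n}\to-\infty$ while the hyperbolic real parts are frozen at $\rho$, all but finitely many parabolic exponents are uniformly separated from the hyperbolic cluster, so one would split off a parabolic tail handled by the pure M\"untz inequality and treat the finitely many remaining parabolic modes together with the entire hyperbolic block as a finite-rank perturbation of the equispaced Ingham family (or by a compactness/unique continuation argument), tracking the sharp constant $2T$ in the parabolic exponent so as to land exactly in the weighted spaces $V$, $V_0$ with the stated $\nu$, $\nu_0$.

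Finally, the negative statements are elementary and rely on exhibiting nonzero unobservable elements. If $\gamma_n=0$ for some $n\in\mathbb N^*$, then $\psi^3_{1,n}(L)=0$, so $\mathcal B^*e^{t\mathcal A^*}\psi_{1,n}\equiv0$ (cf.\ Remark~\ref{rem_modes}): $\psi_{1,n}\neq0$ is invisible to the control, which destroys approximate controllability and hence exact and exact null controllability, in any of the admitted Hilbert spaces. If $0<T<2L$, Ingham's inequality is sharp below the critical time, so there exist unit hyperbolic sequences $(\beta_m)$ with $\int_0^T|\sum_m\beta_me^{\bar\lambda_{2,m}t}|^2\,\mathrm{d}t\to0$, which already rules out the observability inequality; moreover, choosing a nonzero $2L$-periodic $L^2$ function that vanishes on $(0,T)$ — equivalently, by finite speed of propagation, an adjoint state whose parabolic component is zero and whose wave component starts supported away from $x=L$ — its Fourier coefficients produce a nonzero $\varphi_0$ with $\mathcal B^*e^{t\mathcal A^*}\varphi_0\equiv0$ on $(0,T)$, which also rules out approximate controllability.
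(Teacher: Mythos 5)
First, a point of comparison: the paper itself does not prove Theorem~\ref{thm: observability inequality} here; it only states that the proof, given in the companion work \cite{LPT_ongoing}, proceeds via an observability inequality for the adjoint system obtained from an Ingham--M\"untz type inequality applied to Riesz-spectral expansions. Your roadmap coincides with that announced strategy, and the parts you do carry out are sound: the reduction to the pair $(\mathcal{A},\mathcal{B})$ through the well-posed velocity feedback, the duality formulation with $\mathcal{B}^*\psi=\psi_3(L)$, the identification $\psi^3_{1,n}(L)\propto\gamma_n$ (so that \eqref{assumption_gamma_n} is exactly non-degeneracy of the parabolic observation coefficients, consistent with Remark~\ref{rem_modes}), the asymptotics $|\psi^3_{1,n}(L)|^2\asymp \gamma_n^2 n^{-4}e^{-2\pi^2n^2/L}$ and the boundedness of $\psi^3_{2,m}(L)$ away from $0$ and $\infty$, and the bookkeeping showing that the $V'$- and $V_0'$-observability inequalities collapse, up to $T$-dependent constants, to a single estimate with parabolic weight $e^{2T\lambda_{1,n}}$ --- which is precisely what makes the stated exponents $\nu=\tfrac{2\pi^2}{L}(1+\tfrac{T}{L})$ and $\nu_0=\tfrac{2\pi^2}{L}$ come out.

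The genuine gap is the step you yourself flag: the combined Ingham--M\"untz inequality for the mixed family $\{e^{\lambda_{1,n}t}\}_{n\in\mathbb{N}^*}\cup\{e^{\bar\lambda_{2,m}t}\}_{m\in\mathbb{Z}}$ on $(0,T)$, $T>2L$, with the sharp parabolic weight $e^{2T\lambda_{1,n}}$ uniformly against the infinite hyperbolic block. Declaring that the low parabolic modes plus the hyperbolic family can be treated ``as a finite-rank perturbation of the equispaced Ingham family or by a compactness/unique continuation argument'' is not a proof of this: a compactness/uniqueness argument yields non-quantitative constants and cannot by itself deliver the weight $e^{2T\lambda_{1,n}}$ that defines $V$ and $V_0$, and the interaction between the real parabolic exponents and the vertical hyperbolic lattice is exactly what the Ingham--M\"untz machinery of \cite{ingham,komornik2015ingham,bhandari2024boundary} is invoked for in \cite{LPT_ongoing}. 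Without this inequality the positive items (i)--(iii) are unproved; with a non-sharp version one would only get controllability in smaller, unspecified weighted spaces. A secondary, smaller issue concerns the negative part for $0<T<2L$ in an arbitrary Hilbert space $\mathcal{H}\supset\mathcal{H}^0$ with continuous dense embedding: your unobservable datum is built in $\mathcal{H}^0$, but ruling out approximate controllability in $\mathcal{H}$ requires an annihilating functional in $\mathcal{H}'$, which may be a strict (albeit dense) subspace of $\mathcal{H}^0$; an extra argument (e.g.\ exploiting that the unobservable set is an infinite-dimensional subspace parametrized by $L^2(T,2L)$, or arguing directly through finite speed of propagation for the wave component) is needed to cover all admitted spaces. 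In short: correct skeleton, matching the paper's declared route, but the theorem's analytic core is asserted rather than established.
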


\begin{remark}
Thanks to Lemma~\ref{lem: A riesz spectral}, the spaces $V$ and $V_0$ coincide with $\mathcal{H}^0$ on the hyperbolic components, whereas they are strict subspaces on the parabolic component. This reflects the stronger regularity requirement imposed by the reaction-diffusion part of the cascade.

It is interesting to note that the spaces $V$ and $V_0$ are not conventional, due to the fact that, depending on the coupling function $\beta\in L^\infty(0,L)$, the coefficients $\gamma_n$ may be wildly oscillating.
\end{remark}

It is known that exact null controllability implies complete stabilizability (see \cite{liuwangxuyu,trelatwangxu}) for initial data in the same space. Hence, Theorem \ref{thm: observability inequality} implies that, under the assumption of nontriviality of the coefficients $\gamma_n$, there exists a linear feedback control $u$ such that the closed-loop system \eqref{eq: cascade equation} is exponentially stable for initial data in $V_0$, with a decay rate that can be chosen arbitrarily large.
However, this result remains purely theoretical and does not provide an explicit way to construct a feedback law. Moreover, stabilization is obtained only for initial data in $V_0$, which is a rather small subspace of $\mathcal{H}^0$. These facts further motivate the explicit output-feedback construction developed in the present article.

\paragraph{Acknowlegment.}
The third author acknowledges the support of ANR-20-CE40-0009 (TRECOS).


\bibliographystyle{IEEEtranS}
\nocite{*}
\bibliography{IEEEabrv,mybibfile}

\end{document}